\newcounter{dummy}
\newcommand\myitem[1][]{\item[#1]\refstepcounter{dummy}\def\@currentlabel{#1}}
\newcommand{\midb}{\;\middle|\;}
\newcommand{\one}{\mathbbm 1}
\def\argmax{\mathop{\rm argmax}\limits}
\def\reals{\mathbb{R}}
\def\uball{\mathbb{B}}
\def\ereals{\overline{\mathbb{R}}}
\def\comp{\raise 1pt \hbox{$\scriptstyle\circ$}}
\def\argmin{\mathop{\rm argmin}\limits}
\def\minimize{\mathop{\rm minimize}\limits}
\def\maximize{\mathop{\rm maximize}\limits}
\def\st{\mathop{\rm subject\ to}}
\def\dom{\mathop{\rm dom}\nolimits}
\def\upto{{\raise 1pt \hbox{$\scriptstyle \,\nearrow\,$}}}
\def\downto{{\raise 1pt \hbox{$\scriptstyle \,\searrow\,$}}}
\def\cl{\mathop{\rm cl}\nolimits}
\def\epi{\mathop{\rm epi}}
\def\FF{(\F_t)_{t\ge 0}}
\def\ovr{\mathop{\rm over}}
\def\A{{\cal A}}
\def\B{{\cal B}}
\def\F{{\cal F}}
\def\G{{\cal G}}
\def\H{{\cal H}}
\def\L{{\cal L}}
\def\N{{\cal N}}
\def\T{{\cal T}}
\newtheorem{theorem}{Theorem}
\newtheorem{lemma}[theorem]{Lemma}
\newtheorem{example}[theorem]{Example}
\newtheorem{cexample}[theorem]{Counterexample}
\newtheorem{remark}[theorem]{Remark}
\theoremstyle{definition}
\newtheorem{definition}{Definition}
\newtheorem{assumption}{Assumption}
\theoremstyle{empty}
\title{Dynamic programming in convex stochastic optimization}
\author{Teemu Pennanen\thanks{Department of Mathematics, King's College London, Strand, London, WC2R 2LS, United Kingdom, teemu.pennanen@kcl.ac.uk} \and Ari-Pekka Perkki\"o\thanks{Mathematics Institute, Ludwig-Maximilian University of Munich, Theresienstr. 39, 80333 Munich, Germany, a.perkkioe@lmu.de. Corresponding author}}
\begin{document}

\maketitle

\begin{abstract}
This paper studies the dynamic programming principle for general convex stochastic optimization problems introduced by Rockafellar and Wets in \cite{rw76}. We extend the applicability of the theory by relaxing compactness and boundedness assumptions. In the context of financial mathematics, the relaxed assumption are satisfied under the well-known no-arbitrage condition and the reasonable asymptotic elasticity condition of the utility function. Besides financial mathematics, we obtain several new results in linear and nonlinear stochastic programming and stochastic optimal control.
\end{abstract}

\noindent\textbf{Keywords.} Dynamic programming, stochastic programming, convexity
\newline
\newline

\section{Introduction}

Given a probability space $(\Omega,\F,P)$ with a filtration $\FF$ (an increasing sequence of sub-$\sigma$-algebras of $\F$), consider the convex stochastic optimization problem
\begin{equation}\label{sp}\tag{$SP$}
\minimize\quad Eh(x)\quad\ovr x\in\N,
\end{equation}
where $\N$ is a linear space of stochastic processes $x=(x_t)_{t=0}^T$ adapted to $\FF$ (i.e., $x_t$ is $\F_t$-measurable). We assume that $x_t$ takes values in a Euclidean space $\reals^{n_t}$ so the process $x=(x_t)_{t=0}^T$ takes values in $\reals^n$ where $n:=n_0+\cdots+n_T$. The objective is defined on the space $L^0(\Omega,\F,P;\reals^n)$ of $\F$-measurable $\reals^n$-valued functions $x$ by
\[
Eh(x) := \int_\Omega h(x(\omega),\omega)dP(\omega),
\]
where $h$ is a convex normal integrand, i.e.\ an extended real-valued function on $\reals^n\times\Omega$ such that $\omega\mapsto\epi h(\cdot,\omega)$ is a closed convex-valued measurable mapping; see e.g.\ \cite[Chapter~14]{rw98}. By \cite[Proposition~14.28]{rw98}, $\omega\mapsto h(x(\omega),\omega)$ is measurable for all $x\in L^0(\Omega,\F,P;\reals^n)$. Here and in what follows, we define the integral of an extended real-valued random variable as $+\infty$ unless its positive part is integrable. The integral functional $Eh$ is thus a well-defined extended real-valued convex function on $L^0(\Omega,\F,P;\reals^n)$.

Problems of the form \eqref{sp} were first studied in \cite{rw76,evs76} where it was observed that many more specific stochastic optimization problems can be written in this unified format. These include more traditional formulations of stochastic programming, convex stochastic control and various problems in financial mathematics; see Section~\ref{sec:dpapp} below. 

The articles \cite{rw76,evs76} gave formulations of the {\em dynamic programming} principle general enough to apply to the abstract stochastic optimization format \eqref{sp}. They extend many earlier formulations of the stochastic dynamic programming principle such as those in \cite{dan55,bea55,bel57,dyn72}. Like the present article, \cite{rw76} studied the convex case where decisions are described by finite dimensional vectors. The article \cite{evs76} extended the results to nonconvex problems where decisions are described by elements of Polish spaces. Both assumed that the objective is lower bounded and that the decisions are taken from a compact set, uniformly compact in \cite{rw76}. The compactness assumptions were relaxed in \cite{pp12} in the convex case and in \cite{ppr16} in the nonconvex case. This paper relaxes the lower boundedness assumption of the objective. This is interesting in many applications in financial mathematics as well as in mathematical programming such as linear stochastic optimization. In portfolio optimization problems, the relaxed conditions holds if the utility function satisfies the {\em reasonable asymptotic elasticity condition} that is extensively studied in financial mathematics; see e.g.\ \cite{ks99,rs5}. These problems are often set in a stochastic control format, which motivates the study of stochastic control beyond lower bounded objectives as well. We also provide new results on the theory of normal integrands concerning conditional independence. This allows us to derive many well-known results on Markov decision processes using the theory of normal integrands; see Section~\ref{sec:dpapp}. 

Like \cite{rw76,evs76}, our approach builds on the notion of {\em conditional expectation of a normal integrand}, introduced in \cite{bis73}. This allows for significant extensions to many better known formulations of stochastic dynamic programming while greatly simplifying the measurability questions that may come up in the dynamic programming recursion. More traditional formulations will be obtained as special cases in Section~\ref{sec:dpapp} below.  A good illustration of potential measurability complications can be found in \cite{bs78} which follows a different line of analysis not building on the theory of normal integrands; see \cite[Section 1.2.II]{bs78} and Section~\ref{sec:ocdp} below for a further comparison.


An extended real-valued random variable $X$ is said to be {\em quasi-integrable} if either $X^+$ or $X^-$ is integrable. Given a quasi-integrable $X$ and a $\sigma$-algebra $\G \subseteq \F$, there exists an extended real-valued $\G$-measurable random variable $E^\G X$, almost surely unique, such that
\begin{align*}
E\left[\alpha(E^\G X)\right] = E\left[\alpha X\right] \quad\forall \alpha\in L^\infty_+(\Omega,\G,P).
\end{align*}
The random variable $E^\G{X}$ is known as the $\G$-conditional expectation  of $X$. The following extends the operation of conditional expectation to normal integrands. Sufficient conditions for its existence will be given in Section~\ref{sec:ceni}.

\begin{definition}
We say that a normal integrand $E^\G h$ is the {\em $\G$-conditional expectation} of a normal integrand $h$ if it is the almost surely everywhere unique $\G$-measurable normal integrand such that
\[
(E^\G h)(x)=E^\G[h(x)]\quad\text{a.s.}
\]
for all $x\in L^0(\G)$ for which $h(x)$ is quasi-integrable.
\end{definition}

We will use the notations $x^t=(x_0,\ldots,x_t)$, $n^t=n_0+\cdots+n_t$ and $E_t=E^{\F_t}$. We say that an adapted sequence $(h_t)_{t=0}^T$ of normal integrands $h_t:\reals^{n^t}\times\Omega\rightarrow\ereals$ solves the generalized {\em Bellman equations for $h$} if
\begin{equation}\tag{BE}\label{be}
\begin{split}
h_T &= E_Th,\\
h_t &= E_t\inf_{x_{t+1}} h_{t+1}\quad t=T-1,\ldots,0.
\end{split}
\end{equation}
More precisely, this means that there exists another sequence $(\tilde h_t)_{t=0}^{T-1}$ of normal integrands such that
\begin{equation}\tag{BE}\label{be}
\begin{split}
h_T &=E_Th,\\
\tilde h_t(x^t,\omega)&=\inf_{x_{t+1}\in\reals^{n_{t+1}}}h_{t+1}(x^t,x_{t+1},\omega),\\
h_t &= E_t\tilde h_t
\end{split}
\end{equation}
for $t=T-1,\ldots,0$. This paper gives sufficient conditions for the existence of the solutions of \eqref{be}; see \thref{thm:belb,thm:be}  below. We also show that, when solutions exist, they provide useful characterizations of the optimum values and solutions of \eqref{sp}; see \thref{thm:dp0,thm:dpLb} below.  In the literature of stochastic control, results such as \thref{thm:dp0,thm:dpLb} relating the solutions of the Bellman equations to the solutions of the optimization problem are often called ``verification theorems''; see e.g., \cite{fr75}. \thref{thm:dp0,thm:dpLb} are illustrated by deriving several new results, e.g., in linear stochastic programming, stochastic control and financial mathematics.

\section{Conditional expectations of normal integrands}\label{sec:ceni}

The general theory of dynamic programming studied in this paper builds on conditional expectations of normal integrands. This section reviews the theory that will be used in the analysis in the subsequent sections. Most of the results below can be found in the literature but we include the simple proofs because that allows us to make some extensions to the existing theory.


\subsection{Existence}\label{ssec:euni}

Conditional expectations of convex normal integrands were introduced in \cite{bis73}. The article \cite{de76} extended the definition to general $\B\otimes\F$-measurable lower bounded integrands. More general conditions for the existence of a conditional expectation of a normal integrand have been given in \cite{thi81,tru91,chs3}. Our arguments and conditions for existence in Section~\ref{ssec:euni} are largely from \cite{bis73}.

We say that a normal integrand $h$ is {\em L-bounded} if there exist $\rho,m\in L^1$ with
\[
h(x) \ge -\rho|x|-m \quad\forall x\in\reals^n.
\]
Condition 3 in the following lemma was used for the existence of a conditional expectation in \cite[Theorem~2]{bis73}.

\begin{lemma}\thlabel{lem:lbconv}
For a convex normal integrand $h$, the following are equivalent:
\begin{enumerate}
\item
  $h$ is L-bounded,
\item
  there exist $v\in L^1(\reals^n)$ and $m\in L^1$ such that
  \[
  h(x,\omega)\ge x\cdot v(\omega)-m(\omega),
  \]
\item
  $\dom Eh^*\cap L^1\ne\emptyset$.  
\end{enumerate}
\end{lemma}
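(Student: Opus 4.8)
The plan is to prove the equivalences by a cycle $(1)\Rightarrow(2)\Rightarrow(3)\Rightarrow(1)$, exploiting convex duality to convert the pointwise lower bound into a statement about the conjugate integrand $h^*$. The implication $(2)\Rightarrow(1)$ is immediate by taking $\rho=|v|$ (so that $x\cdot v\ge-|x|\,|v|$ by Cauchy--Schwarz), and $m$ unchanged; so the content lies in the other two steps.

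For $(2)\Rightarrow(3)$, observe that the affine minorant $h(x,\omega)\ge x\cdot v(\omega)-m(\omega)$ says precisely that $h^*(v(\omega),\omega)\le m(\omega)$ pointwise, by definition of the conjugate $h^*(y,\omega)=\sup_x\{x\cdot y-h(x,\omega)\}$. Since $v\in L^1(\reals^n)$ and $m\in L^1$, integrating gives $Eh^*(v)\le Em<\infty$, while $Eh^*(v)>-\infty$ holds automatically because $h^*$, being a conjugate of a function that is somewhere finite (which we may assume; otherwise the statements are degenerate), is itself proper. Hence $v\in\dom Eh^*\cap L^1$, which is nonempty. One subtlety to check here is measurability of $\omega\mapsto h^*(v(\omega),\omega)$, but $h^*$ is again a normal integrand by \cite[Theorem~14.50]{rw98}, so this is routine.

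For $(3)\Rightarrow(1)$, take any $v\in\dom Eh^*\cap L^1$, so $m:=h^*(v)$ satisfies $m^+\in L^1$ (by the convention on integrals, $Eh^*(v)<\infty$ forces the positive part integrable); since $h^*(v)\ge x\cdot v-h(x,\omega)$ is bounded below whenever $h$ is somewhere finite, we may also arrange $m\in L^1$ after replacing it by $\max\{m,c\}$ for a suitable constant or by using properness of $h^*$ directly. Then the Fenchel--Young inequality $h(x,\omega)\ge x\cdot v(\omega)-h^*(v(\omega),\omega)=x\cdot v(\omega)-m(\omega)\ge-|x|\,|v(\omega)|-m(\omega)$ gives L-boundedness with $\rho:=|v|\in L^1$.

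**The main obstacle** I anticipate is the integrability bookkeeping around $h^*(v)$: one must ensure that $h^*(v)$ is not merely quasi-integrable but genuinely in $L^1$, i.e.\ that its negative part is also integrable, in order to produce the $m\in L^1$ demanded in $(1)$ and $(2)$. This requires a uniform lower bound on $h^*(v(\omega),\omega)$, which follows from $h$ having a common point where it is finite-valued (a standing nondegeneracy that should be made explicit, or extracted from normality together with properness a.s.); alternatively one works with $-\rho|x|-m$ where $m$ need only have integrable positive part and absorbs the rest, matching the convention stated in the introduction. I would handle this by first reducing to the case that $h(\cdot,\omega)$ is proper for a.e.\ $\omega$ and that there is a fixed $\bar x$ with $h(\bar x)\in L^1$ (or at least $h(\bar x)^+\in L^1$), then reading off the bound $h^*(y,\omega)\ge \bar x\cdot y-h(\bar x,\omega)$.
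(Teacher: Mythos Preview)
Your cycle is announced as $(1)\Rightarrow(2)\Rightarrow(3)\Rightarrow(1)$, but you never prove $(1)\Rightarrow(2)$. You observe that $(2)\Rightarrow(1)$ is trivial (it is), and you then prove $(2)\Rightarrow(3)$ and $(3)\Rightarrow(1)$; this leaves the implication $(1)\Rightarrow(2)$ (equivalently, $(1)\Rightarrow(3)$) completely open. That is precisely the substantive direction: from the \emph{non-affine} lower bound $h(x,\omega)\ge -\rho(\omega)|x|-m(\omega)$ you must produce a \emph{measurable} $v\in L^1$ with $h^*(v)\le m$. Pointwise in $\omega$ such a $v$ exists by convex analysis, but the measurable dependence on $\omega$ is not automatic.

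The paper handles this by rewriting $h\ge-\rho|\cdot|-m$ as $(h+\rho|\cdot|)^*(0)\le m$, expanding via the inf-convolution formula \cite[Theorem~16.4]{roc70a} as
\[
\inf_{v\in\reals^n}\{h^*(v)+\delta_{\uball}(v/\rho)\}\le m
\]
with the infimum attained, and then invoking a measurable selection result \cite[Theorem~14.37]{rw98} to extract $v\in L^0$ with $|v|\le\rho$ (hence $v\in L^1$) and $h^*(v)\le m$. This gives both $(1)\Rightarrow(3)$ and, via Fenchel's inequality, $(1)\Rightarrow(2)$ at once. Your proposal is missing exactly this measurable-selection step.

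A secondary point: the ``main obstacle'' you worry about in $(3)\Rightarrow(1)$---whether $h^*(v)^-\in L^1$---is not actually an obstacle. The definition of L-bounded only asks for $m\in L^1$ with $h(x,\omega)\ge-\rho(\omega)|x|-m(\omega)$, and since $-h^*(v)\ge-h^*(v)^+$ you may simply take $m=h^*(v)^+$, which is integrable by the paper's convention on $Eh^*(v)<\infty$. No properness or common-finiteness assumption is needed.
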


\begin{proof}
Let $v\in L^1$ such that $Eh^*(v)<\infty$ is finite. By Fenchel's inequality,
\[
h(x,\omega)\ge x\cdot v - h^*(v,\omega) \ge -|v||x|-h^*(v,\omega),
\]
so we may choose $\rho=|v|$ and $m=h^*(v)^+$. On the other hand, $h\ge-\rho|\cdot|-m$ can be written as $(h+\rho|\cdot|)^*(0)\le m$. By \cite[Theorem~16.4]{roc70a}, this means that
\[
\inf_{v\in\reals^n}\{h^*(v)+\delta_\uball(v/\rho)\}\le m,
\]
where the infimum is attained. By \cite[Theorem~14.37]{rw98}, there is a $v\in L^0$ with $|v|\le\rho$ and $h^*(v)\le m$.
\end{proof}


If a normal integrand $h$ is L-bounded, then $h(x)$ is quasi-integrable and $E^\G [h(x)]$ is well-defined for every $x\in L^\infty(\G)$. The following lemma shows that, for L-bounded normal integrands, it suffices to test with $x\in L^\infty(\G)$ in the definition of conditional expectation.

\begin{lemma}\thlabel{lem:citest}
Given an L-bounded normal integrand $h$, a $\G$-normal integrand $\bar h$ is the $\G$-conditional expectation of $h$ if and only if
\[
\bar h(x) = E^\G[h(x)] \quad a.s.
\]
for all $x\in L^\infty(\G)$. In particular, if $h(x)$ is $\G$-measurable for every $x\in L^\infty(\G)$, then $E^\G h=h$.
\end{lemma}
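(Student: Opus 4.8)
The plan is to prove both implications of the ``if and only if'' and then read off the final ``in particular'' claim. The forward direction is immediate from the definition: if $\bar h$ is the $\G$-conditional expectation of $h$, then $\bar h(x)=E^\G[h(x)]$ a.s.\ holds for \emph{all} $x\in L^0(\G)$ for which $h(x)$ is quasi-integrable, and since $h$ is L-bounded, every $x\in L^\infty(\G)$ has $h(x)$ quasi-integrable (indeed $h(x)\ge-\rho|x|-m$ with $\rho,m\in L^1$ and $x$ bounded, so $h(x)^-\in L^1$). Hence $\bar h(x)=E^\G[h(x)]$ a.s.\ for all $x\in L^\infty(\G)$, which is the stated restricted identity.

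For the converse, suppose $\bar h$ is a $\G$-normal integrand with $\bar h(x)=E^\G[h(x)]$ a.s.\ for all $x\in L^\infty(\G)$; I must upgrade this to all $x\in L^0(\G)$ with $h(x)$ quasi-integrable. The idea is a truncation-and-localization argument. Given such an $x$, set $A_k=\{|x|\le k\}\in\G$ and let $x^{(k)}:=x\one_{A_k}$ (using an arbitrary bounded $\G$-measurable selection, e.g.\ $0$, off $A_k$); each $x^{(k)}\in L^\infty(\G)$, so $\bar h(x^{(k)})=E^\G[h(x^{(k)})]$ a.s. On $A_k$ we have $x^{(k)}=x$, so $\bar h(x^{(k)})=\bar h(x)$ and $h(x^{(k)})=h(x)$ on $A_k$; since $A_k\in\G$, this gives $\bar h(x)\one_{A_k}=E^\G[h(x)\one_{A_k}]=E^\G[h(x)]\one_{A_k}$ a.s.\ (the last step uses $\G$-measurability of $\one_{A_k}$ together with quasi-integrability of $h(x)$ so that the conditional expectation ``pulls out'' the indicator). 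Letting $k\to\infty$ and using $A_k\uparrow\Omega$ a.s.\ yields $\bar h(x)=E^\G[h(x)]$ a.s. Two small points need care: that $E^\G[h(x)\one_{A_k}]=E^\G[h(x)]\one_{A_k}$ for quasi-integrable (not necessarily integrable) $h(x)$ — this follows by splitting into positive and negative parts and applying the analogous identity for nonnegative random variables — and that the null set where the identity fails can be chosen uniformly in $k$, which is fine since there are countably many $k$.

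The main obstacle is really just this localization step for a quasi-integrable, possibly non-integrable integrand: one must be careful that conditional expectation behaves well under multiplication by a $\G$-measurable indicator in the quasi-integrable setting, and that truncating $x$ does not destroy quasi-integrability of $h(x^{(k)})$ — but L-boundedness of $h$ guarantees $h(x^{(k)})^-\in L^1$ for each $k$, so $h(x^{(k)})$ is quasi-integrable and the conditional expectation on the right is well-defined. Once the equivalence is established, the ``in particular'' statement is immediate: if $h(x)$ is $\G$-measurable for every $x\in L^\infty(\G)$, then $E^\G[h(x)]=h(x)$ a.s.\ for all such $x$, so $h$ itself satisfies the defining identity of the restricted characterization, whence $E^\G h=h$ by the part just proved and by the a.s.\ uniqueness of the conditional expectation of a normal integrand.
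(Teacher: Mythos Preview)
Your proof is correct and follows essentially the same truncation argument as the paper's. The paper frames the converse by contrapositive (assume $\bar h$ fails the full condition for some $x\in L^0(\G)$, multiply by $\one_{\{|x|\le\nu\}}$, and deduce that $\bar h(\one_{\{|x|\le\nu\}}x)\ne E^\G[h(\one_{\{|x|\le\nu\}}x)]$), whereas you argue directly; the algebraic manipulations with the $\G$-measurable indicator and the use of L-boundedness to guarantee quasi-integrability of the truncated integrands are identical in both versions.

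One minor remark on the ``in particular'': to invoke the equivalence with $\bar h=h$ you implicitly use that $h$ itself is a $\G$-normal integrand, which is not literally part of the hypothesis (only that $h(x)$ is $\G$-measurable for each $x\in L^\infty(\G)$). The paper's proof is equally silent on this point, so it is not a discrepancy with the paper, but you may wish to note that the hypothesis is being read as (or implies) $\G$-measurability of $h$ as a normal integrand.
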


\begin{proof}
It is clear that the condition holds if $\bar h=E^\G h$. On the other hand, if $\bar h\ne E^\G h$, then there exists an $x\in L^0(\G)$ such that $h(x)$ is quasi-integrable and $\bar h(x)\ne E^\G [h(x)]$. For $\nu$ large enough, 
\[
\one_{\{|x|\le \nu\}}\bar h(x)\ne \one_{\{|x|\le \nu\}}E^\G [h(x)].
\]
The left side equals $\one_{\{|x|\le \nu\}}\bar h(\one_{\{|x|\le \nu\}}x)$ while the right side equals
\begin{align*}
  \one_{\{|x|\le \nu\}}E^\G [h(x)] &= \one_{\{|x|\le \nu\}}E^\G[\one_{\{|x|\le \nu\}}h(x)]\\
  &= \one_{\{|x|\le \nu\}}E^\G[\one_{\{|x|\le \nu\}}h(\one_{\{|x|\le \nu\}}x)]\\
&= \one_{\{|x|\le \nu\}}E^\G[h(\one_{\{|x|\le \nu\}}x)].
\end{align*}
Thus, $\bar h(\one_{\{|x|\le \nu\}}x)\ne E^\G[h(\one_{\{|x|\le \nu\}}x)]$ so the condition fails. By \cite[Lemma~6]{thi81}, the condition defines $\bar h$ uniquely almost surely everywhere.
\end{proof}

\begin{example}\thlabel{ex:celq}
Given $Q\in L^1(\reals^{n\times n})$, $v\in L^1(\reals^n)$, $m\in L^1$, the function
\[
h(x,\omega):=\frac{1}{2}x\cdot Q(\omega)x+x\cdot v(\omega)+m(\omega).
\]
is a normal integrand, by \cite[Example~14.29]{rw98}. If $Q$ is almost surely positive definite, then $h$ is L-bounded and
\[
(E^\G h)(x,\omega) = \frac{1}{2}x\cdot E^\G[Q](\omega)x+x\cdot (E^\G v)(\omega) + (E^\G m)(\omega).
\]
\end{example}

\begin{proof}
Given $x\in L^\infty$, \thref{lem:ce} gives
\[
E^G[h(x)]=\frac{1}{2}x\cdot E^\G[Q](\omega)x+x\cdot (E^\G v)(\omega)+(E^\G m)(\omega),
\]
so the claim follows from \thref{lem:citest}.
\end{proof}

\begin{theorem}\thlabel{thm:existenceCar}
Let $h$ be a real-valued normal integrand such that there exist $\bar x\in\reals^n$ and $\rho\in L^1$ with $h(\bar x)\in L^1$ and
\[
|h(x)-h(x')|\le\rho |x-x'| \quad\forall x,x'\in\reals^n.
\]
Then $E^\G h$ exists and it is characterized by
\[
(E^\G h)(x) = E^\G[h(x)] \quad \forall x\in\reals^n.
\]
Moreover, 
\begin{equation}\label{eq:lipschitz}
|(E^\G h)(x-x')|\le (E^\G\rho) |x-x'| \quad\forall x,x'\in\reals^n.
\end{equation}
\end{theorem}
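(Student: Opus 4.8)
The plan is to construct the candidate integrand $\bar h$ directly by a pointwise conditional expectation on a countable dense set, extend it by Lipschitz continuity, verify it is a $\G$-normal integrand, and finally check that it satisfies the defining property on all of $L^0(\G)$ for which $h(x)$ is quasi-integrable.

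First I would observe that the Lipschitz bound together with $h(\bar x)\in L^1$ gives, for every $x\in\reals^n$, the estimate $|h(x,\omega)|\le |h(\bar x,\omega)|+\rho(\omega)|x-\bar x|$, so $h(x)\in L^1$ for each fixed $x\in\reals^n$ and in particular $h$ is L-bounded, so Lemma \ref{lem:citest} applies. Fix a countable dense set $D\subseteq\reals^n$ (e.g.\ $\reals^n\cap\mathbb{Q}^n$). For each $q\in D$ pick a version of $E^\G[h(q)]$; by countability we may choose these versions on a single $\G$-measurable set $\Omega_0$ of full measure so that simultaneously, for all $q,q'\in D$, the inequality $|E^\G[h(q)]-E^\G[h(q')]|\le (E^\G\rho)|q-q'|$ holds (this follows by applying $E^\G$ to the pointwise Lipschitz inequality, using monotonicity of conditional expectation and that $E^\G\rho\in L^1$ is finite a.s.). On $\Omega_0$ the map $q\mapsto E^\G[h(q)](\omega)$ is thus $(E^\G\rho)(\omega)$-Lipschitz on $D$, so it extends uniquely to a real-valued $(E^\G\rho)(\omega)$-Lipschitz function $x\mapsto\bar h(x,\omega)$ on all of $\reals^n$; off $\Omega_0$ set $\bar h(\cdot,\omega):=0$. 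By construction \eqref{eq:lipschitz} holds.

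Next I would check that $\bar h$ is a $\G$-normal integrand. Since $\bar h(q,\cdot)$ is $\G$-measurable for each $q\in D$ and $\bar h(x,\omega)=\lim_{q\to x,\,q\in D}\bar h(q,\omega)$ is a pointwise limit of $\G$-measurable functions, $\bar h(x,\cdot)$ is $\G$-measurable for every $x\in\reals^n$; being also continuous in $x$, $\bar h$ is a Carathéodory integrand, hence a $\G$-normal integrand by \cite[Example~14.29 / Corollary~14.34]{rw98}. To identify $\bar h$ as $E^\G h$, by Lemma \ref{lem:citest} it suffices to show $\bar h(x)=E^\G[h(x)]$ a.s.\ for every $x\in L^\infty(\G)$. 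For a $\G$-measurable simple function $x$ this follows by applying the defining identity on each of the finitely many atoms where $x$ is constant (the value there lies in $\reals^n$, and $\one_A E^\G[h(x)]=\one_A E^\G[h(c)]=\one_A\bar h(c)=\one_A\bar h(x)$ for $\G$-measurable $A=\{x=c\}$, extended from $c\in D$ to $c\in\reals^n$ by continuity and the $L^1$-continuity of $E^\G$). For general $x\in L^\infty(\G)$, approximate uniformly by $\G$-simple $x_k\to x$; then $\bar h(x_k)\to\bar h(x)$ a.s.\ by continuity of $\bar h$, while $h(x_k)\to h(x)$ a.s.\ with $|h(x_k)-h(x)|\le\rho|x_k-x|\to 0$ in $L^1$ (dominated convergence, since the $x_k$ are uniformly bounded), so $E^\G[h(x_k)]\to E^\G[h(x)]$ in $L^1$ and hence a.s.\ along a subsequence; passing to the limit gives $\bar h(x)=E^\G[h(x)]$.

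The main obstacle I expect is the bookkeeping in the second step: choosing the versions of $E^\G[h(q)]$ coherently so that the Lipschitz inequality holds simultaneously for all countably many pairs on one common full-measure set, and then making sure the resulting extended function is genuinely $\G$-measurable (not merely $\F$-measurable) in $\omega$ — this is exactly where restricting to a countable dense set and exploiting continuity in $x$ is essential. Everything else is routine: the L-boundedness, the passage from simple functions to $L^\infty(\G)$, and the final appeal to Lemma \ref{lem:citest} for uniqueness.
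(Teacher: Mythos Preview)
Your proposal is correct and follows essentially the same route as the paper: construct $\bar h$ on a countable dense set via pointwise conditional expectations, extend by the $(E^\G\rho)$-Lipschitz property on a common full-measure set, observe the result is a $\G$-Carath\'eodory integrand, verify the identity on $\G$-simple functions and pass to $L^\infty(\G)$ by dominated convergence, and conclude via Lemma~\ref{lem:citest}. The only cosmetic difference is that the paper restricts its simple functions to take values in the dense set $D$ rather than arguing separately that $\bar h(c)=E^\G[h(c)]$ extends to all $c\in\reals^n$, but this is immaterial.
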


\begin{proof}
The assumptions imply that $h(x)\in L^1$ for all $x\in\reals^n$ and that, by Jensen's inequality,
\[
|E^\G[h(x)]-E^\G[h(x')]|\le (E^\G\rho)|x-x'|\quad a.s.
\]
for all $x,x'\in\reals^n$. Let $D$ be a countable dense set in $\reals^n$ and define $\tilde h(x,\omega):=E^\G[h(x)](\omega)$ for each $x\in D$. By countability of $D$, there is a $P$-null set $N\in\G$ such that, for all $\omega\in\Omega\setminus N$,
\[
|\tilde h(x,\omega)-\tilde h(x',\omega)|\le (E^\G\rho)(\omega)|x-x'|\quad \forall x,x'\in D.
\]
The function $\tilde h$ has a unique continuous extension to $\reals^n\times(\Omega\setminus N)$. Finally, we extend the definition of $\tilde h$ to all of $\reals^n\times\Omega$ by setting $\tilde h(\cdot,\omega)=0$ for $\omega\in N$. The function $\tilde h$ thus constructed is a $\G$-measurable Caratheodory integrand and thus, normal. It is clear that it satisfies \eqref{eq:lipschitz} as well.

If $x=\sum_{i=1}^\nu x^i\one_{A^i}$, where $x^i\in D$ and $A^i\in\G$ form a disjoint partition of $\Omega$, we have
\[
E^\G[h(x)] = E^\G [h(\sum_{i=1}^\nu x^i\one_{A^i})] = E^\G[\sum_{i=1}^\nu \one_{A^i} h(x^i)] =  \sum_{i=1}^\nu \one_{A^i} \tilde h(x^i) =\tilde h(x). 
\]
Any $x\in L^\infty(\G)$ is an almost sure limit of such simple random variables bounded by $\|x\|_{L^\infty}$, so dominated convergence for conditional expectations (see e.g.\ \cite[Theorem~II.7.2]{shi96}) and scenariowise continuity of $h$ and $\tilde h$ imply that $E^\G[h(x)]=\tilde h(x)$. Thus Lemma~\ref{lem:citest} implies the claim.
\end{proof}

\begin{cexample}
The claim of \thref{thm:existenceCar} fails if $\rho$ is not integrable. Indeed, let $\xi$ be uniformly distributed on $(0,1)$, $\eta>1$ be nonintegrable independent of $\xi$ and $\G$ be generated by $\xi$. Let $h(x,\omega)=\eta(\omega)|x-\xi(\omega)|$. Then $E^\G [h(x)]=+\infty$ for every constant $x\in\reals$. On the other hand, choosing $x=\xi$, we have $E^\G[h(x)]=0$, so $E^\G h$ is not characterized by constants even though $h$ is Lipschitz and L-bounded convex normal integrand.
\end{cexample}

\begin{lemma}\thlabel{lem:cp1}
Let $h^1$ and $h^2$ be L-bounded normal integrands with $h^1\le h^2$. Then $E^\G h^1\le E^\G h^2$ whenever the conditional expectations exist.
\end{lemma}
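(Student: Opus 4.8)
The plan is to reduce the claimed inequality between the two normal integrands to the elementary monotonicity of conditional expectation of random variables, using \thref{lem:citest} as the bridge in both directions.

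First, since $h^1$ and $h^2$ are L-bounded and their conditional expectations are assumed to exist, \thref{lem:citest} gives $(E^\G h^i)(x)=E^\G[h^i(x)]$ a.s.\ for $i=1,2$ and every $x\in L^\infty(\G)$. Fix such an $x$. Writing $h^1(\cdot)\ge-\rho|\cdot|-m$ with $\rho,m\in L^1$, we get $h^1(x)\ge-\rho\|x\|_{L^\infty}-m\in L^1$, so the negative part of $h^1(x)$ is integrable, and likewise for $h^2(x)$; hence $E^\G[h^1(x)]$ and $E^\G[h^2(x)]$ are well-defined $\ereals$-valued random variables. From $h^1\le h^2$ we have $h^1(x)\le h^2(x)$ a.s., so the usual monotonicity of conditional expectation yields $(E^\G h^1)(x)\le(E^\G h^2)(x)$ a.s.\ for every $x\in L^\infty(\G)$.

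It remains to upgrade this inequality, tested only against bounded $\G$-measurable functions, to the scenariowise pointwise inequality $E^\G h^1\le E^\G h^2$. Suppose it fails, so that $B:=\{\omega:\exists x\in\reals^n,\ (E^\G h^1)(x,\omega)>(E^\G h^2)(x,\omega)\}$ has positive probability. Since $(E^\G h^i)(\cdot,\omega)$ is lsc, $\{x:(E^\G h^1)(x,\omega)>q\}$ is open and $\lev_{\le q}(E^\G h^2)(\cdot,\omega)$ is closed, and a Castaing representation $(w_k)$ of $\omega\mapsto\lev_{\le q}(E^\G h^2)(\cdot,\omega)$ shows that $B_q:=\{\omega:\exists x,\ (E^\G h^1)(x,\omega)>q\ge(E^\G h^2)(x,\omega)\}=\bigcup_k\{\omega:(E^\G h^1)(w_k(\omega),\omega)>q\}$ is measurable; choosing a rational $q$ with $P(B_q)>0$ and applying measurable selection to the multifunction $\omega\mapsto\{x:(E^\G h^1)(x,\omega)>q\ge(E^\G h^2)(x,\omega)\}$ on $B_q$ produces a $\G$-measurable $x^*$, extended by $0$ off $B_q$. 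Because $B_q\cap\{|x^*|\le\nu\}\uparrow B_q$, some $\nu$ makes this set of positive probability, and then $\bar x:=\one_{B_q\cap\{|x^*|\le\nu\}}x^*\in L^\infty(\G)$ satisfies $(E^\G h^1)(\bar x)>(E^\G h^2)(\bar x)$ on a set of positive probability, contradicting the previous paragraph. Hence $E^\G h^1\le E^\G h^2$.

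The routine part is the monotonicity step for random variables; the substantive point is the last paragraph, where one must pass from an inequality holding after testing against $L^\infty(\G)$ to a pointwise inequality of the integrands, which requires a measurable selection together with a truncation to remain in $L^\infty(\G)$, plus a little care with $\ereals$-arithmetic when describing the exceptional set $B$. Alternatively, one could invoke an order analogue of the uniqueness statement \cite[Lemma~6]{thi81} already used in the proof of \thref{lem:citest}.
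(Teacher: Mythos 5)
Your proof is correct and follows essentially the same route as the paper: test against $x\in L^\infty(\G)$ (where L-boundedness gives quasi-integrability), apply monotonicity of the ordinary conditional expectation, and then upgrade to a scenariowise inequality of the integrands. The only difference is that the paper dispatches the final upgrade by citing the comparison result \cite[Lemma~6]{thi81}, whereas you carry out the underlying measurable-selection-plus-truncation argument explicitly (and correctly), as you yourself note in your closing remark.
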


\begin{proof}
For any $x\in L^\infty(\G)$, $h^1(x)$ and $h^2(x)$ are quasi-integrable, so $h^1\le h^2$ implies
\[
(E^\G h^1)(x)=E^\G[h_1(x)]\le E^\G[h_2(x)]=(E^\G h^2)(x)
\]
and the result follows from \cite[Lemma~6]{thi81}.
\end{proof}

The following result is a monotone convergence theorem for conditional expectations of integrands. 

\begin{theorem}\thlabel{thm:mon}
Let $(h^\nu)_{\nu=1}^\infty$ be a nondecreasing sequence of L-bounded normal integrands and 
\[
h=\sup_\nu h^\nu.
\]
If each $E^\G h^\nu$ exists, then $E^\G h$ exists and 
\[
E^\G h=\sup_\nu E^\G h^\nu.
\]
\end{theorem}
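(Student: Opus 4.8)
The plan is to verify that $\bar h := \sup_\nu E^\G h^\nu$ is a normal integrand satisfying the defining property of $E^\G h$, and then invoke uniqueness from \cite[Lemma~6]{thi81}. First I would check that $h$ is itself an L-bounded normal integrand: each $h^\nu$ majorizes a common bound $-\rho^1|\cdot|-m^1$ (taking $\nu=1$), so $h=\sup_\nu h^\nu$ does too, and $h$ is a normal integrand since the supremum of countably many normal integrands is normal (its epigraph is the countable intersection of the closed convex-valued measurable epigraphs; see \cite[Chapter~14]{rw98}). Likewise $\bar h=\sup_\nu E^\G h^\nu$ is $\G$-measurable and normal for the same reason, and by \thref{lem:cp1} the sequence $(E^\G h^\nu)_\nu$ is nondecreasing, so the supremum is a genuine limit.

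Next I would establish the key identity $\bar h(x) = E^\G[h(x)]$ a.s.\ for every $x\in L^\infty(\G)$; by \thref{lem:citest} this characterizes $E^\G h$ and completes the proof. Fix $x\in L^\infty(\G)$. Since $h^\nu(x)\upto h(x)$ pointwise and each $h^\nu(x)$ is bounded below by the fixed integrable function $-\rho^1|x|-m^1$, the conditional monotone convergence theorem (see e.g.\ \cite[Theorem~II.7.2]{shi96}) gives
\[
E^\G[h(x)] = \sup_\nu E^\G[h^\nu(x)] = \sup_\nu (E^\G h^\nu)(x) = \bar h(x)\quad a.s.,
\]
where the middle equality is the defining property of each $E^\G h^\nu$ (valid for $x\in L^\infty(\G)$ by L-boundedness of $h^\nu$) and the last is the definition of $\bar h$. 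This is exactly the condition required by \thref{lem:citest}, so $E^\G h$ exists and equals $\bar h$.

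The only genuinely delicate point is the normality (closedness and measurability of the epigraphical mapping) of the supremum $\sup_\nu E^\G h^\nu$: one must be careful that the a.s.\ exceptional null sets arising in \thref{lem:cp1} and in the defining equalities for the countably many $E^\G h^\nu$ can be unioned into a single null set, off of which everything holds simultaneously; inside that good set the supremum of an increasing sequence of lsc convex functions is again lsc and convex, hence a normal integrand after the usual redefinition on the null set. Beyond this bookkeeping the argument is routine, reducing to conditional monotone convergence and the uniqueness lemma.
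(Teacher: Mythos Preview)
Your proposal is correct and follows essentially the same route as the paper: verify that $\bar h:=\sup_\nu E^\G h^\nu$ satisfies $\bar h(x)=E^\G[h(x)]$ for all $x\in L^\infty(\G)$ and then invoke \thref{lem:citest}. The only cosmetic difference is that the paper establishes this identity by testing against $\alpha\in L^\infty_+(\G)$ and applying the ordinary monotone convergence theorem to $E[\alpha h^\nu(x)]$, whereas you apply the conditional monotone convergence theorem directly; your extra care about the normality of the countable supremum is a point the paper leaves implicit.
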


\begin{proof}
For any $x\in L^\infty(\G)$ and $\alpha\in L^\infty(\G;\reals_+)$, monotone convergence and Lemma~\ref{lem:cp1} imply that
\begin{align*}
E[\alpha{h}(x)] &= E[\alpha\sup_\nu h^\nu(x)]= \sup_\nu E[\alpha h^\nu(x)]\\
&= \sup_\nu E[\alpha E^\G[h^\nu(x)]]=  E[\alpha\sup_\nu E^\G[h^\nu(x)]].
\end{align*}
Thus $\sup_\nu E^\G h^\nu=E^\G h$.
\end{proof}

The following is our main result on the existence of conditional normal integrands.

\begin{theorem}\thlabel{thm:existce}
An L-bounded (convex) normal integrand admits a conditional expectation and that is L-bounded (and convex) as well. 
\end{theorem}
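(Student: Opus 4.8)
The plan is to reduce the statement to a nonnegative integrand, approximate that from below by Lipschitz integrands covered by \thref{thm:existenceCar}, and pass to the limit with \thref{thm:mon}. For the reduction, suppose $h$ is L-bounded, $h\ge-\rho|\cdot|-m$ with $\rho,m\in L^1$. In general I would subtract the minorant $c(x,\omega):=\rho(\omega)|x|+|m(\omega)|$, a nonnegative Carathéodory integrand that is Lipschitz in $x$ with modulus $\rho$ and has $c(0,\cdot)=|m|\in L^1$; if $h$ is convex I would instead take $c(x,\omega):=-x\cdot v(\omega)+m(\omega)$, where $x\cdot v-m\le h$ is the integrable affine minorant supplied by \thref{lem:lbconv}. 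In both cases $\tilde h:=h+c$ is a nonnegative normal integrand, convex when $h$ is, $h=\tilde h-c$, and by \thref{thm:existenceCar} $E^\G c$ exists and is a Carathéodory integrand, affine in $x$ when $h$ is convex. Granting the theorem for $\tilde h$, the integrand $E^\G\tilde h-E^\G c$ is normal (a normal integrand minus a Carathéodory one); on $L^\infty(\G)$ it agrees with $E^\G[\tilde h(\cdot)-c(\cdot)]=E^\G[h(\cdot)]$ by additivity of conditional expectation with the integrable summand $c(\cdot)$, so \thref{lem:citest} identifies it with $E^\G h$; it is L-bounded because $E^\G\tilde h\ge 0$ and $E^\G c$ has an $L^1$ affine bound, and convex when $h$ is because then $E^\G c$ is affine. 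Hence we may assume $h\ge 0$.

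Given $h\ge 0$, introduce the Pasch--Hausdorff envelopes
\[
h^\nu(x,\omega):=\inf_{x'\in\reals^n}\{h(x',\omega)+\nu|x-x'|\},\qquad\nu\in\naturals.
\]
Each $h^\nu$ is a normal integrand, convex when $h$ is, by the infimal-projection results applied to the normal integrand $(x,x',\omega)\mapsto h(x',\omega)+\nu|x-x'|$ (cf.\ \cite[Proposition~14.47]{rw98}); the sequence is nondecreasing, $0\le h^\nu\le h$, and $\sup_\nu h^\nu=h$, since if $h(x,\omega)<\infty$ and $h^\nu(x,\omega)\le h(x,\omega)-\eps$ along a subsequence, the near-minimizers satisfy $\nu|x-x'_\nu|\le h(x,\omega)$, so $x'_\nu\to x$, contradicting lower semicontinuity of $h(\cdot,\omega)$, and the case $h(x,\omega)=+\infty$ is analogous. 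Moreover $h^\nu(\cdot,\omega)$ is real-valued and $\nu$-Lipschitz whenever $\dom h(\cdot,\omega)\ne\emptyset$, and $\equiv+\infty$ on the measurable set $B:=\{\omega:h^\nu(0,\omega)=+\infty\}=\{\omega:\dom h(\cdot,\omega)=\emptyset\}$.

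The delicate point is that $E^\G h^\nu$ is not directly reachable from \thref{thm:existenceCar}: $h^\nu$ may be nowhere integrable and may have empty domain on a positive-measure set, while keeping $h^\nu$ convex forbids capping it from above to force it into the scope of that theorem. I would resolve this by peeling off the $\omega$-only level. Put $g^\nu(x,\omega):=h^\nu(x,\omega)-h^\nu(0,\omega)$ for $\omega\notin B$ and $g^\nu(\cdot,\omega):=0$ for $\omega\in B$; this is a real-valued, $\nu$-Lipschitz normal integrand, convex when $h$ is, with $g^\nu(0,\cdot)=0\in L^1$, so \thref{thm:existenceCar} yields a Carathéodory integrand $E^\G g^\nu$ with $(E^\G g^\nu)(x)=E^\G[g^\nu(x)]$ on $L^\infty(\G)$, which is $\nu$-Lipschitz and (by extending midpoint convexity on a countable dense set through continuity) convex when $g^\nu$ is. Let $\phi^\nu:=E^\G[h^\nu(0,\cdot)]\in[0,\infty]$ be the ordinary conditional expectation of the nonnegative random variable $h^\nu(0,\cdot)$, and set $\bar h^\nu:=E^\G g^\nu+\phi^\nu$. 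Then $\bar h^\nu$ is a $\G$-measurable normal integrand, nonnegative and convex when $h$ is (adding an $[0,\infty]$-valued function of $\omega$ to a normal integrand shifts, or empties, its epigraph measurably), and $h^\nu(x)=g^\nu(x)+h^\nu(0,\cdot)$ pointwise for every $x\in L^\infty(\G)$ — including on $B$, where both sides are $+\infty$ — so $E^\G[h^\nu(x)]=E^\G[g^\nu(x)]+\phi^\nu=\bar h^\nu(x)$ by additivity with the integrable summand $g^\nu(x)$. As $h^\nu$ is L-bounded, \thref{lem:citest} gives $E^\G h^\nu=\bar h^\nu$, which is nonnegative (hence L-bounded) and convex when $h$ is.

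Finally, the $h^\nu$ form a nondecreasing sequence of L-bounded normal integrands with $\sup_\nu h^\nu=h$ and with all conditional expectations existing, so \thref{thm:mon} yields that $E^\G h$ exists and equals $\sup_\nu E^\G h^\nu$; this supremum is nonnegative, hence L-bounded, and convex whenever each $E^\G h^\nu$ is, that is, whenever $h$ is convex. Undoing the reduction from the first paragraph completes the proof. I expect the only real obstacle to be the third step: convexity forbids truncating the envelopes $h^\nu$ from above, so one cannot merely invoke \thref{thm:existenceCar}, and the possibly non-integrable level term $h^\nu(0,\cdot)$ has to be carried along as a nonnegative remainder whose (extended-valued) conditional expectation still exists.
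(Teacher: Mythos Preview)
Your proof is correct and takes a genuinely different route from the paper's. The paper first truncates from above by $\min\{h,m\}$ (which destroys convexity), then forms the Lipschitz envelopes with modulus $\nu\rho$; with the upper bound in place, these envelopes are bounded and lie directly in the scope of \thref{thm:existenceCar}, so two applications of \thref{thm:mon} (first in $\nu$, then in $m$) give existence. Convexity is not carried through the construction but is instead imported from \cite[Proposition~1.6.2]{tru91}. You, by contrast, reduce to $h\ge 0$ by subtracting an integrable minorant (affine in the convex case), form the Pasch--Hausdorff envelopes with constant modulus $\nu$, and resolve the difficulty that $h^\nu(0,\cdot)$ need not be integrable by splitting $h^\nu=g^\nu+h^\nu(0,\cdot)$ and handling the level term as an ordinary extended-valued conditional expectation. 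The midpoint-convexity argument on a countable dense set, extended by the Lipschitz continuity of $E^\G g^\nu$, makes the convexity claim self-contained. The paper's argument is shorter, at the cost of outsourcing the convexity preservation; your argument is more intricate but proves everything from the ingredients already in the paper.
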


\begin{proof}
Let $h$ be an L-bounded normal integrand. Assume first that $h\le m$ for some constant $m>0$. By \cite[Example~9.11]{rw98},  
\[
h^\nu(x,\omega):=\inf_{x'} \{h(x',\omega)+\nu \rho(\omega)|x-x'|\}
\]
form a nondecreasing sequence of Caratheodory functions increasing pointwise to $h$. The assumed upper bound and the L-boundedness of $h$ imply that $h^\nu$ satisfy the assumptions of Theorem~\ref{thm:existenceCar}. 
Thus, by Theorems~\ref{thm:existenceCar} and \ref{thm:mon}, $E^\G h$ exists. To remove the assumption $h\le m$, consider the nondecreasing sequence of functions $h^n(x):=\min\{h(x),m\}$ (which are normal integrands, by \cite[Proposition~14.44]{rw98}) and apply Theorem~\ref{thm:mon} again. The preservation of convexity follows from \cite[Proposition~1.6.2]{tru91}.
\end{proof}

The truncation argument in the above proof is adapted from \cite{thi81}. It gives the existence under slightly more general conditions than \cite{bis73} who assumed the existence of a $\G$-measurable $x$ such that $h(x)$ is integrable. More general existence results in the nonconvex case have been given in \cite{tru91,chs3}.

\subsection{Conditional expectations in operations}\label{sec:properties}

Most results in this section can be found in \cite{bis73} \cite{hu77} and \cite{tru91} with the exception of \thref{thm:cini} and parts 2 and 3 of \thref{thm:condoperations} which seem new. \thref{thm:cerec} extends Corollary~1 of \cite[Theorem~3]{bis73} by slightly relaxing the assumptions on the domains of the integral functionals. 

Recall that if $\xi$ is a quasi-integrable random variable and $\G'$ is a sub-$\sigma$-algebra of $\G$, then
\[
E^{\G'}[E^{\G}\xi] = E^{\G'}\xi;
\]
see \thref{tp}. This extends to normal integrands as follows.
\begin{theorem}[Tower property]\thlabel{thm:tower}
Assume that $h$ is an L-bounded normal integrand, and $\G'$ is a sub-$\sigma$-algebra of $\G$. Then
\[
E^{\G'} (E^\G h)=E^{\G'} h.
\]
\end{theorem}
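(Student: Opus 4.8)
The plan is to reduce the identity of normal integrands to the classical tower property of conditional expectations of random variables, tested against bounded random variables in the smaller $\sigma$-algebra $\G'$. First I would note that by \thref{thm:existce} the conditional expectation $E^\G h$ exists and is L-bounded, so $E^{\G'}(E^\G h)$ also exists and is L-bounded (and, in the convex case, convex), and likewise $E^{\G'} h$ exists. Thus both sides are well-defined L-bounded $\G'$-measurable normal integrands, and by \thref{lem:citest} it suffices to check that they agree when evaluated at an arbitrary $x \in L^\infty(\G')$.

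So fix $x \in L^\infty(\G')$. Since $\G' \subseteq \G$, we have $x \in L^\infty(\G)$ as well, and L-boundedness guarantees $h(x)$ and $(E^\G h)(x)$ are quasi-integrable. Applying the defining property of $E^{\G'}$ of a normal integrand to the integrand $E^\G h$ at the point $x \in L^\infty(\G')$ gives
\[
\bigl(E^{\G'}(E^\G h)\bigr)(x) = E^{\G'}\bigl[(E^\G h)(x)\bigr] \quad \text{a.s.}
\]
Now, because $x \in L^\infty(\G)$, the defining property of $E^\G h$ gives $(E^\G h)(x) = E^\G[h(x)]$ a.s. Substituting and using the scalar tower property (recalled just before the statement, via \thref{tp}) for the quasi-integrable random variable $h(x)$,
\[
\bigl(E^{\G'}(E^\G h)\bigr)(x) = E^{\G'}\bigl[E^\G[h(x)]\bigr] = E^{\G'}[h(x)] \quad \text{a.s.}
\]
On the other hand, applying the defining property of $E^{\G'} h$ at $x \in L^\infty(\G')$ gives $(E^{\G'} h)(x) = E^{\G'}[h(x)]$ a.s. Hence the two integrands agree a.s.\ at every $x \in L^\infty(\G')$, and \thref{lem:citest} (or directly \cite[Lemma~6]{thi81}) yields $E^{\G'}(E^\G h) = E^{\G'} h$ as normal integrands.

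The only genuinely delicate point is the bookkeeping around quasi-integrability and the "almost surely everywhere" uniqueness: one must make sure the null sets on which the various a.s.\ identities fail can be combined uniformly in $x$, but this is exactly what \thref{lem:citest} is designed to handle — it reduces the equality of two L-bounded normal integrands to equality of the evaluated random variables for $x$ ranging over $L^\infty(\G')$, and the uniqueness statement of \cite[Lemma~6]{thi81} then pins down the integrand a.s.\ everywhere. So I expect no real obstacle; the proof is a short chain of three rewrites (definition of $E^{\G'}$ on the outer integrand, definition of $E^\G$ on the inner, scalar tower property) plus an appeal to \thref{lem:citest}.
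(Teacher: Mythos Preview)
Your proposal is correct and follows essentially the same route as the paper's proof: invoke \thref{thm:existce} for existence and L-boundedness of $E^\G h$, then reduce to the scalar tower property via \thref{lem:citest}. The paper compresses this into one sentence, but your expanded chain of rewrites is exactly the intended argument.
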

\begin{proof}
By Theorem~\ref{thm:existce}, $E^\G h$ exists and is L-bounded, so the result follows from the usual tower property (see \thref{tp}) and Lemma~\ref{lem:citest}.
\end{proof}

Given $\H\subset\F$, $\sigma$-algebras $\G$ and $\G'$ are {\em $\H$-conditionally independent} if
\[
E^\H[1_{A'} 1_A] = E^\H[1_{A'}] E^\H[1_A]
\]
for every $A\in \G$ and $A'\in \G'$.  A random variable $w$ is {\em $\H$-conditionally independent} of $\G$ if $\sigma(w)$ and $\G$ are $\H$-conditionally independent.  Likewise, we say that a normal integrand $h$ is {\em $\H$-conditionally independent} of $\G$ if $\sigma(h)$ and $\G$ are $\H$-conditionally independent. Here $\sigma(h)$ is the smallest $\sigma$-algebra under which $\epi h$ is measurable. In other words, $\sigma(h)$ is generated by the family
\[
\{(\epi h)^{-1}(O)\mid O\subset\reals^{n+1} \text{ open}\}.
\]

\begin{example}\thlabel{cindcomp}
Let $\xi$ be a random variable with values in a measurable space $(\Xi,\A)$, $H$ a $\A$-normal integrand on $\reals^n$ and
\[
h(x,\omega)=H(x,\xi(\omega)).
\]
 If $\xi$ is $\H$-conditionally independent of $\G$, then $h$ is so too. Indeed, given an open $O\subset\reals^{n+1}$, we have $(\epi h)^{-1}(O)=\xi^{-1}((\epi H)^{-1}(O))$, so $\sigma(h)\subseteq\sigma(\xi)$. To conclude, it suffices to note that sub-$\sigma$-algebras inherit conditional independence.
\end{example}

If an integrable random variable $w$ is $\H$-conditionally independent of $\G$, then
\[
E^{\G\vee\H} [w] = E^\H [w],
\]
by \thref{lem:condind}. This extends to normal integrands as follows.

\begin{theorem}\thlabel{thm:cini}
Let $h$ be an L-bounded normal integrand $\H$-conditionally independent of $\G$. Then 
\[
E^{\G\vee\H} h=E^\H h.
\]
In particular, if $h$ is independent of $\G$, then $E^\G h$ is deterministic. 
\end{theorem}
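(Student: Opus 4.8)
The plan is to show that $E^\H h$, which exists and is L-bounded by \thref{thm:existce} and is $(\G\vee\H)$-measurable since it is $\H$-measurable, is the $(\G\vee\H)$-conditional expectation of $h$; by \thref{lem:citest} and the uniqueness it provides, this reduces to checking $(E^\H h)(x)=E^{\G\vee\H}[h(x)]$ a.s.\ for every $x\in L^\infty(\G\vee\H)$. Since normal integrands are only lower semicontinuous, I would not try to verify this directly for arbitrary such $x$; instead I would follow the architecture of the proof of \thref{thm:existce}, reducing through two monotone approximations to a Carath\'eodory integrand, for which \thref{thm:existenceCar} says a conditional expectation is determined by its deterministic values and for which the scalar statement \thref{lem:condind} applies.

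The point that needs care is that the reductions must preserve $\H$-conditional independence of $\G$, i.e.\ $\sigma(h)\perp\G\mid\H$. For this I want an L-bound whose data is $\sigma(h)$-measurable: there is $v\in L^1\cap L^0(\sigma(h))$ with $h^*(v)^+\in L^1$, so that $h(x)\ge x\cdot v-h^*(v)\ge-\rho|x|-m$ with $\rho:=|v|$ and $m:=h^*(v)^+$ both $\sigma(h)$-measurable. Such a $v$ is obtained by taking, at each level $k$, a $\sigma(h)$-measurable selection from the closed, $\sigma(h)$-measurably varying set $\{v:|v|\le k,\ h^*(v,\cdot)\le k\}$ on the increasing $\sigma(h)$-sets where it is nonempty (nonempty for a.e.\ $\omega$ eventually, by \thref{lem:lbconv} applied to any given L-bound), and patching; integrability of $v$ and $h^*(v)^+$ follows by comparison with the given L-bound.

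Now run the reductions. The truncations $h^k:=\min\{h,k\}$ are normal integrands (\cite[Proposition~14.44]{rw98}) increasing to $h$, L-bounded with the same $\rho,m$, and satisfy $\epi h^k=\epi h\cup(\reals^n\times[k,\infty))$, so $\sigma(h^k)\subseteq\sigma(h)$ and each $h^k$ is still $\H$-conditionally independent of $\G$; by \thref{thm:mon}, $E^{\G\vee\H}h=\sup_kE^{\G\vee\H}h^k$ and $E^\H h=\sup_kE^\H h^k$, so I may assume $h\le k_0$. Then the Moreau envelopes $h^\nu(x,\omega):=\inf_{x'}\{h(x',\omega)+\nu\rho(\omega)|x-x'|\}$ are Carath\'eodory integrands increasing to $h$ (\cite[Example~9.11]{rw98}) satisfying the hypotheses of \thref{thm:existenceCar}, and since $\rho$ is $\sigma(h)$-measurable, $\sigma(h^\nu)\subseteq\sigma(h)$, so $h^\nu$ is again $\H$-conditionally independent of $\G$; by \thref{thm:mon} once more it suffices to treat a fixed $g:=h^\nu$. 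For such a Carath\'eodory $g$, \thref{thm:existenceCar} makes $E^{\G\vee\H}g$ and $E^\H g$ Carath\'eodory integrands determined by $(E^{\G\vee\H}g)(c)=E^{\G\vee\H}[g(c)]$ and $(E^\H g)(c)=E^\H[g(c)]$ for $c\in\reals^n$; for fixed $c$ the variable $g(c)$ is $\sigma(g)\subseteq\sigma(h)$-measurable and integrable (it lies between $-\rho|c|-m$ and $k_0$), hence $\H$-conditionally independent of $\G$, so $E^{\G\vee\H}[g(c)]=E^\H[g(c)]$ by \thref{lem:condind}. Thus $E^{\G\vee\H}g$ and $E^\H g$ agree at every deterministic point and are scenariowise continuous, hence coincide; unwinding the two suprema gives $E^{\G\vee\H}h=E^\H h$. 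The final assertion is the case $\H=\{\emptyset,\Omega\}$, where $\H$-conditional independence is ordinary independence and $E^\G h=E^{\G\vee\H}h=E^{\{\emptyset,\Omega\}}h$, a $\{\emptyset,\Omega\}$-measurable, hence deterministic, normal integrand.

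I expect the main obstacle to be the $\sigma(h)$-measurable L-bound in the second paragraph: every downstream step (truncation, Moreau regularization) must stay within the class of integrands that are $\H$-conditionally independent of $\G$, and a generic L-bound would enlarge the relevant $\sigma$-algebra by $\sigma(\rho)$ and destroy this; once that is secured, the monotone-convergence and Carath\'eodory steps are routine given \thref{thm:mon}, \thref{thm:existenceCar} and \thref{lem:condind}.
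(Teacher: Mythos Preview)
Your approach is correct and mirrors the paper's: truncate, Lipschitz-regularize, apply \thref{thm:existenceCar} together with \thref{lem:condind} at deterministic points, and pass to the limit via \thref{thm:mon}. Your insistence on a $\sigma(h)$-measurable L-bound is precisely the detail the paper's terse ``using Lipschitz regularizations as in the proof of \thref{thm:existce}'' skips; without it the regularized integrands need not remain $\H$-conditionally independent of $\G$, so your extra paragraph is doing real work.

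Two small repairs to that paragraph. First, \thref{lem:lbconv} is stated only for convex $h$; for general $h$ either pass through $h^{**}$ (which inherits the L-bound and satisfies $(h^{**})^*=h^*$, so $\dom Eh^*\cap L^1\ne\emptyset$ follows), or bypass conjugates entirely by taking $\rho':=E^{\sigma(h)}\rho$, $m':=E^{\sigma(h)}m$ and checking $h(x,\omega)\ge-\rho'(\omega)|x|-m'(\omega)$ along a Castaing representation of $\epi h(\cdot,\omega)$. Second, $\rho=|v|$ may vanish on the set where $h(\cdot,\omega)$ is bounded below, in which case the envelope $\inf_{x'}\{h(x',\omega)+\nu\rho(\omega)|x-x'|\}$ is constant in $x$ and does not increase to $h$; use $\rho+1$ (still $\sigma(h)$-measurable and integrable) in the Moreau step.
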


\begin{proof}
Assume first that $h$ satisfies the assumptions of Theorem~\ref{thm:existenceCar}. Then $E^\H h$ is characterized by
\[
E^\H (h(x)) = (E^\H h)(x)\quad\forall x\in \reals^n
\]
and likewise for $E^{\G\vee\H}h$. Thus $E^{\G\vee\H}h = E^\H h$, by Lemma~\ref{lem:condind}. The first claim now follows using Lipschitz regularizations as in the proof of Theorem~\ref{thm:existce}. The second claim follows by taking $\H$ the trivial $\sigma$-algebra.
\end{proof}

\begin{remark}
Conditional expectation is a linear operator on the linear space of normal integrands that satisfy the assumptions of \thref{thm:existenceCar}.
\end{remark}
\begin{proof}
If $h$ satisfies the assumptions of \thref{thm:existenceCar}, then so does $-h$, and $h(x)\in L^1$ for every $x\in\reals^n$. Thus
\[
E^\G[-h](x)=E^\G[-h(x)]=-E^\G[h(x)]=-(E^\G h)(x) \quad\forall x\in\reals^n,
\]
so $E^\G[-h]=-E^{\G} h$, by \thref{thm:existenceCar}. Additivity is proved similarly.
\end{proof}

The class of normal integrands is not a linear space, so one cannot hope for linearity of the conditional expectation, in general.

The product of two extended real numbers is defined as zero if one of them is zero while the nonnegative scalar multiple of a function $h$ is defined by
\[
(\alpha h)(x) :=
\begin{cases}
  \alpha h(x) & \text{if $\alpha>0$},\\
  \delta_{\cl\dom h}(x) & \text{if $\alpha=0$}.
\end{cases}
\]
Equivalently,
\[
(\alpha h)(x) := \alpha h(x)+\delta_{\cl\dom h}(x)\quad\alpha\ge 0.
\]

\begin{theorem}\thlabel{thm:condoperations}
Let $h$, $h^1$ and $h^2$ be L-bounded normal integrands.
\begin{enumerate}
\item\label{opersum}
  $h^1+h^2$ is L-bounded and $E^\G(h^1+h^2)=E^\G h^1+E^\G h^2$. 
\item
  If $\alpha \in L^1_+$, $h$ is $\G$-measurable and $\alpha h$ is L-bounded, then $E^\G(\alpha h)=E^\G[\alpha]h$.
\item\label{opermult}
  If $\alpha\in L^0_+(\G)$ and $\alpha h$ is L-bounded, then $E^\G(\alpha h) = \alpha E^\G h$ if either $\alpha$ is strictly positive or $F^\G[\cl\dom h]=\cl\dom E^\G h$.
\end{enumerate}
\end{theorem}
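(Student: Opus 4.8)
The plan for all three parts follows the same scheme. By \thref{thm:existce} all the conditional integrands involved exist and are L-bounded; one checks that the proposed right-hand side is a $\G$-normal integrand, verifies the defining identity on test functions $x\in L^\infty(\G)$, and invokes \thref{lem:citest}. For $x\in L^\infty(\G)$ every integrand below is bounded from below by an integrable function, so all scenariowise values are quasi-integrable and each identity reduces to one about conditional expectations of quasi-integrable random variables. \emph{Part 1.} Adding the two L-bounds shows $h^1+h^2$ is L-bounded. For $x\in L^\infty(\G)$ we have $h^i(x)\ge-\rho_i\|x\|_{L^\infty}-m_i\in L^1$, so $h^1(x)+h^2(x)$ is well defined, and additivity of conditional expectation for random variables bounded below by an integrable function gives $E^\G[(h^1+h^2)(x)]=E^\G[h^1(x)]+E^\G[h^2(x)]=(E^\G h^1+E^\G h^2)(x)$. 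Since $E^\G h^1+E^\G h^2$ is a sum of L-bounded $\G$-normal integrands, hence a $\G$-normal integrand, \thref{lem:citest} gives the claim.

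\emph{Part 2.} Fix $x\in L^\infty(\G)$; then $h(x)$, $\delta_{\cl\dom h}(x)$ and $(E^\G\alpha)h(x)+\delta_{\cl\dom h}(x)$ are $\G$-measurable because $h$ is. Partition $\Omega$ into the $\G$-sets $A:=\{E^\G\alpha=0\}$ and $A^c$. On $A$ one has $\alpha=0$ a.s., so $(\alpha h)(x)=\delta_{\cl\dom h}(x)$ a.s., and taking $E^\G$ leaves this $\G$-measurable function unchanged, matching $(E^\G[\alpha]h)(x)=\delta_{\cl\dom h}(x)$ there. On $A^c$ the point is to ``take out the known integrand'' $h$: splitting $h(x)=h(x)^+-h(x)^-$, truncating each nonnegative $\G$-measurable piece, applying the elementary identity $E^\G[\alpha\beta]=\beta E^\G[\alpha]$ for bounded $\G$-measurable $\beta\ge0$ together with conditional monotone convergence, and using that L-boundedness of $\alpha h$ makes $\alpha h(x)^-$ integrable (so the resulting difference is unambiguous), one obtains $E^\G[(\alpha h)(x)]=(E^\G[\alpha])h(x)+\delta_{\cl\dom h}(x)$ after disposing of the cases $h(x)=+\infty$ and $E^\G\alpha=0$ with the convention $0\cdot(+\infty)=0$. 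As $E^\G[\alpha]h$ is a $\G$-normal integrand, \thref{lem:citest} finishes.

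\emph{Part 3.} Now $\alpha\in L^0_+(\G)$, so for $x\in L^\infty(\G)$ we split along the $\G$-sets $\{\alpha>0\}$ and $\{\alpha=0\}$. On $\{\alpha>0\}$ the $\cl\dom$-term of $\alpha h$ vanishes, $(\alpha h)(x)=\alpha h(x)$ scenariowise, and the same truncation plus take-out-what-is-known argument (again using L-boundedness of $\alpha h$) gives $E^\G[\alpha h(x)]=\alpha E^\G[h(x)]=\alpha(E^\G h)(x)=(\alpha E^\G h)(x)$; this half needs no extra hypothesis. On $\{\alpha=0\}$ we have $(\alpha h)(x)=\delta_{\cl\dom h}(x)$ whereas $(\alpha E^\G h)(x)=\delta_{\cl\dom E^\G h}(x)$, so the identity to check is precisely $E^\G[\delta_{\cl\dom h}]=\delta_{\cl\dom E^\G h}$ at $x$: this holds vacuously when $\alpha>0$ a.s.\ ($\{\alpha=0\}$ is then null) and is exactly the assumed identity $E^\G[\cl\dom h]=\cl\dom E^\G h$ in the other case. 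Combining the two $\G$-sets and applying \thref{lem:citest} (with $\alpha E^\G h$ a $\G$-normal integrand) gives $E^\G(\alpha h)=\alpha E^\G h$.

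The routine part is the reduction to $L^\infty(\G)$ via \thref{lem:citest} and checking normality of the right-hand sides; the genuinely nontrivial points are the ``take out what is known'' identity for merely quasi-integrable products $\alpha h(x)$, handled by truncation and conditional monotone convergence, and, more delicately, reconciling the scenariowise product $\alpha h(x)$ — which is $0$ wherever $\alpha=0$ — with the integrand value $(\alpha h)(x)=\delta_{\cl\dom h}(x)$. The discrepancy between $\cl\dom h$ and $\cl\dom E^\G h$ on $\{\alpha=0\}$ is exactly why part 3 must assume $E^\G[\cl\dom h]=\cl\dom E^\G h$ once $\alpha$ is permitted to vanish.
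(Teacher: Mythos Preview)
Your proof is correct and follows the same scheme as the paper's: verify the identity on $x\in L^\infty(\G)$ and invoke \thref{lem:citest}. The paper streamlines Parts~2 and~3 by writing $(\alpha h)(x)=\alpha h(x)+\delta_{\cl\dom h}(x)$ uniformly and then citing \thref{lem:ce} (which already contains the ``take out what is known'' identity you re-derive via truncation and conditional monotone convergence), so the case splits on $\{E^\G\alpha=0\}$ and $\{\alpha=0\}$ become unnecessary; but the substance is the same, and your identification of the role of the hypothesis $F^\G[\cl\dom h]=\cl\dom E^\G h$ (equivalently $E^\G\delta_{\cl\dom h}=\delta_{\cl\dom E^\G h}$) on $\{\alpha=0\}$ is exactly what the paper uses.
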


\begin{proof}
Let $x\in L^\infty(\G)$. Since $h^1(x)^-$ and $h^2(x)^-$ are integrable, \ref{opersum} follows from \thref{lem:citest} and the first part of \thref{lem:ce}. In 2,
\begin{align*}
  E^\G[(\alpha h)(x)] &= E^\G[\alpha h(x) + \delta_{\cl\dom h}(x)]\\
  &= E^\G[\alpha]h(x) + \delta_{\cl\dom h}(x),\\
  &=(E^\G[\alpha]h)(x),
\end{align*}
by \thref{lem:ce} so the claim follows from \thref{lem:citest} again. In 3,
\begin{align*}
  E^\G[(\alpha h)(x)] &= E^\G[\alpha h(x) + \delta_{\cl\dom h}(x)]\\
  &=\alpha E^\G[h(x)] + E^\G[\delta_{\cl\dom h}(x)],
\end{align*}
by \thref{lem:ce}. If $\alpha$ is strictly positive or if $F^\G[\cl\dom h]=\cl\dom E^\G h$, we thus get
\begin{align*}
  E^\G[(\alpha h)(x)] &=\alpha(E^\G h)(x) + \delta_{\cl\dom E^\G h}(x)\\
  &= (\alpha E^\G h)(x),
\end{align*}
which completes the proof.
\end{proof}

The following shows that part 3 of \thref{thm:condoperations} may fail without the extra assumptions.

\begin{cexample}
Let  $\G$ be trivial, $\alpha=0$ almost surely and $h(x,\omega):=\eta|x|$, where  $\eta\notin L^1$ is nonnegative. Then  $\alpha h=0$ while $E^\G h=\delta_{\{0\}}$, so
\[
E^\G(\alpha h) \ne \alpha E^\G h.
\]
\end{cexample}

\begin{theorem}\thlabel{thm:cerec}
Assume that $h$ is a convex normal integrand such that there exists $x\in \dom Eh\cap L^0(\G)$ and $v\in \dom Eh^*\cap L^1$ with $(x\cdot v)^-\in L^1$. Then
\[
E^\G(h^\infty)=(E^\G h)^\infty.
\]
\end{theorem}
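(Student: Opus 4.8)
The plan is to realise both $h^\infty$ and $(E^\G h)^\infty$ as the supremum of a nondecreasing sequence of difference quotients taken at a fixed $\G$-measurable base point, and to push $E^\G$ through this supremum using \thref{thm:mon}. Fix data $\bar x\in\dom Eh\cap L^0(\G)$ and $v\in\dom Eh^*\cap L^1$ as in the hypothesis, so that $h(\bar x)^+,h^*(v)^+\in L^1$ and $(\bar x\cdot v)^-\in L^1$. The first (and fiddliest) step is integrability bookkeeping: Fenchel's inequality gives $\bar x\cdot v\le h(\bar x)+h^*(v)$, so $(\bar x\cdot v)^+\in L^1$ and hence $\bar x\cdot v\in L^1$; feeding this into $0\le r:=h^*(v)+h(\bar x)-\bar x\cdot v$ then yields $h^*(v)^-,h(\bar x)^-\in L^1$, so in fact $h^*(v),h(\bar x)\in L^1$ and $r\in L^1_+$. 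In particular $\bar x(\omega)\in\dom h(\cdot,\omega)$ and $h(\cdot,\omega)$ is proper for a.e.\ $\omega$, and $(E^\G h)(\bar x)=E^\G[h(\bar x)]\in L^1$, so $\bar x(\omega)\in\dom(E^\G h)(\cdot,\omega)$ for a.e.\ $\omega$ as well.

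For $\nu\in\naturals$ set
\[
g_\nu(x,\omega):=\nu^{-1}\bigl(h(\bar x(\omega)+\nu x,\omega)-h(\bar x(\omega),\omega)\bigr).
\]
By the calculus of normal integrands (affine change of variable by the Carath\'eodory map $(x,\omega)\mapsto\bar x(\omega)+\nu x$, followed by subtraction of the integrable function $\nu^{-1}h(\bar x)$; see \cite[\S14.E]{rw98}) each $g_\nu$ is a normal integrand; convexity of $t\mapsto h(\bar x(\omega)+tx,\omega)$ makes $(g_\nu)_\nu$ nondecreasing with $\sup_\nu g_\nu=h^\infty$ pointwise for a.e.\ $\omega$, by the difference-quotient characterisation of the recession function \cite[Theorem~8.5]{roc70a} (valid since $\bar x(\omega)\in\dom h(\cdot,\omega)$ a.e.); and Fenchel's inequality gives $g_\nu(x,\omega)\ge x\cdot v(\omega)-\nu^{-1}r(\omega)\ge-|v(\omega)||x|-r(\omega)$, so each $g_\nu$ is $L$-bounded. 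Hence $E^\G g_\nu$ exists by \thref{thm:existce}, and \thref{thm:mon} shows that $E^\G h^\infty$ exists with $E^\G h^\infty=\sup_\nu E^\G g_\nu$.

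It remains to identify $E^\G g_\nu$. For $x\in L^\infty(\G)$ we have $\bar x+\nu x\in L^0(\G)$ and, by Fenchel, $h(\bar x+\nu x)\ge\bar x\cdot v+\nu(x\cdot v)-h^*(v)\in L^1$, so $h(\bar x+\nu x)$ is quasi-integrable; the defining property of $E^\G h$ then gives $E^\G[h(\bar x+\nu x)]=(E^\G h)(\bar x+\nu x)$, and likewise $E^\G[h(\bar x)]=(E^\G h)(\bar x)\in L^1$. Consequently, setting
\[
\tilde g_\nu(x,\omega):=\nu^{-1}\bigl((E^\G h)(\bar x(\omega)+\nu x,\omega)-(E^\G h)(\bar x(\omega),\omega)\bigr),
\]
which is a $\G$-normal integrand because $E^\G h$ is one (\thref{thm:existce}), we obtain $\tilde g_\nu(x)=E^\G[g_\nu(x)]$ a.s.\ for every $x\in L^\infty(\G)$, whence $E^\G g_\nu=\tilde g_\nu$ by \thref{lem:citest}. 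Since $\bar x(\omega)\in\dom(E^\G h)(\cdot,\omega)$ for a.e.\ $\omega$, \cite[Theorem~8.5]{roc70a} applied to the proper lsc convex function $(E^\G h)(\cdot,\omega)$ (monotonicity of difference quotients lets one take the supremum over integers) gives $\sup_\nu\tilde g_\nu(x,\omega)=(E^\G h)^\infty(x,\omega)$ for a.e.\ $\omega$, so that $E^\G(h^\infty)=\sup_\nu E^\G g_\nu=\sup_\nu\tilde g_\nu=(E^\G h)^\infty$. The main obstacle is precisely the integrability bookkeeping of the first paragraph — extracting $h(\bar x),h^*(v),\bar x\cdot v\in L^1$ and an $L^1$ lower bound for the $g_\nu$ from the asymmetric hypothesis $(\bar x\cdot v)^-\in L^1$ — together with ensuring that each place where $E^\G$ is pulled through a subtraction is licensed by quasi-integrability; verifying that $g_\nu$ and $\tilde g_\nu$ are normal integrands is routine but must be cited explicitly.
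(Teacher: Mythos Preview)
Your proof is correct and follows essentially the same route as the paper: difference quotients at a $\G$-measurable base point, Fenchel's inequality for $L$-boundedness, and \thref{thm:mon} to pass the conditional expectation through the supremum. The only difference is cosmetic --- the paper compresses your identification of $E^\G g_\nu$ as the difference quotient of $E^\G h$ into a bare citation of \thref{thm:condoperations}, whereas you spell it out via \thref{lem:citest} and do the integrability bookkeeping ($h(\bar x),h^*(v),\bar x\cdot v\in L^1$) explicitly; your version is more careful but not a different argument.
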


\begin{proof}
The difference quotients
\[
h^\lambda(x',\omega):=\frac{h(x(\omega)+\lambda x',\omega)-h(x(\omega),\omega)}{\lambda}
\]
define a sequence of normal integrands $(h^\lambda)_{\lambda=1}^\infty$ that, by convexity, increase pointwise to $h^\infty$. By Fenchel's inequality,
\[
h^\lambda(x',\omega) \ge x' \cdot v(\omega)+x\cdot v(\omega)-h^*(v(\omega),\omega))-h(x(\omega),\omega),
\]
so the claim follows from \thref{thm:mon,thm:condoperations}.
\end{proof}







\section{Dynamic programming for lower bounded objectives}\label{sec:dplb}

The optimality conditions in \thref{thm:dp0} are essentially from \cite{pp12} but formulated here more generally. The existence results for the generalized Bellman equations for lower bounded integrands in \thref{thm:belb} are from \cite{pp12}. They extend those of \cite{rw76,evs76} in the convex case by relaxing compactness assumption on the set of feasible strategies. Lemma~6 of \cite{pp12} gives also the converse of \thref{thm:belb} in the sense that \thref{ass:dplb} necessarily holds if the generalized Bellman equations admit a solution and the sets $N_t$ are linear for all $t$.

This section studies dynamic programming in the case where $h$ is {\em lower bounded} in the sense that there exists an $m\in L^1$ such that
\[
h(x,\omega)\ge m(\omega)\quad\forall x\in\reals^n
\]
almost surely. 

\begin{assumption}\thlabel{ass:dplb}
Problem \eqref{sp} is feasible, $h$ is lower bounded and
\[
\L:=\{x\in\N \mid h^\infty(x)\le 0\}
\]
is a linear space.
\end{assumption}

The following is from \cite[Lemma~4]{pp12}.

\begin{theorem}\thlabel{thm:belb}
Under \thref{ass:dplb}, \eqref{be} has a unique solution $(h_t)$ of lower bounded normal integrands and
\[
N_t:=\{x_t\in\reals^{n_t} \mid h_t^\infty(x^t,\omega)\le 0,\ x^{t-1}=0\}
\]
are linear-valued for all $t$. In this case, if $x\in\L$ is such that $x^{t-1}=0$ then $x_t\in N_t$ almost surely.
\end{theorem}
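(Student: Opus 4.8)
The plan is to establish the existence and uniqueness of the solution by backward induction on $t=T,T-1,\ldots,0$, simultaneously maintaining the invariant that each $h_t$ is a lower bounded normal integrand and that the corresponding $N_t$ is linear-valued. Since lower boundedness means $h\ge m$ for some $m\in L^1$, each $h$ is in particular L-bounded, so \thref{thm:existce} applies and all the conditional expectations appearing in \eqref{be} exist as lower bounded (convex) normal integrands. The base step is immediate: $h_T:=E_Th$ exists and is lower bounded by $E_Tm$. For the inductive step, given a lower bounded normal integrand $h_{t+1}$ on $\reals^{n^{t+1}}\times\Omega$, I would first verify that $\tilde h_t(x^t,\omega):=\inf_{x_{t+1}}h_{t+1}(x^t,x_{t+1},\omega)$ is again a normal integrand — this is the projection of $\epi h_{t+1}$ in the last coordinate block and is covered by the standard calculus of normal integrands (e.g.\ \cite[Proposition~14.47]{rw98}), using lower boundedness to ensure the infimum is not $-\infty$; it is still lower bounded by the same $m$, hence L-bounded, so $h_t:=E_t\tilde h_t$ exists. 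This produces a solution $(h_t)$ of lower bounded normal integrands.

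The main work is the claim that $N_t$ is linear-valued and that it captures the ``recession'' part of $\L$ in the sense stated. The key identity to exploit is the relationship between the recession function of an infimal projection and of a conditional expectation: one needs that $(E_t\tilde h_t)^\infty = E_t(\tilde h_t^\infty)$ (this is precisely \thref{thm:cerec}, whose hypotheses on the domains of $Eh$ and $Eh^*$ should be arranged from feasibility of \eqref{sp} together with L-boundedness, which gives $\dom Eh^*\cap L^1\neq\emptyset$ via \thref{lem:lbconv}), and that the recession function of $\inf_{x_{t+1}}h_{t+1}(x^t,x_{t+1},\cdot)$ is $\inf_{x_{t+1}}h_{t+1}^\infty(x^t,x_{t+1},\cdot)$ — a standard fact for lower bounded convex functions. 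Composing these down the recursion, one identifies $h_t^\infty$ with the ``value function'' of the recession problem built from $h^\infty$, and then $N_t$ becomes the analogue of $\L$ for that recession problem restricted to strategies with $x^{t-1}=0$. Linearity of $\L$ (\thref{ass:dplb}) should then propagate backward: if $h_t^\infty(x^t,\cdot)\le 0$ with $x^{t-1}=0$ fails to be symmetric in $x_t$, one could lift the offending $x_t$ to a strategy in $\N$ contradicting linearity of $\L$. I would make this precise by showing both inclusions: feasibility gives that $x\in\L$ with $x^{t-1}=0$ forces $x_t\in N_t$ (by unwinding the Bellman recursion and the recession identities), and conversely that an asymmetry of $N_t$ would produce an asymmetry of $\L$.

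Uniqueness of the solution follows because \eqref{be} is a deterministic recursion once one knows each $\tilde h_t$ and each conditional expectation is almost surely unique (the uniqueness clause in the definition of $E^\G h$, together with \cite[Lemma~6]{thi81} as used in \thref{lem:citest}); the only point requiring care is that the intermediate integrands $\tilde h_t$ are themselves determined up to a.s.\ equality by $h_{t+1}$, which holds because the infimal projection is a pointwise (scenariowise) operation.

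I expect the main obstacle to be the bookkeeping around the recession identity $(E_t\tilde h_t)^\infty=E_t(\tilde h_t^\infty)$ and the infimal-projection-versus-recession interchange: one must check that the domain conditions in \thref{thm:cerec} are met at every stage (in particular producing a $v\in\dom E\tilde h_t^*\cap L^1$ with $(x\cdot v)^-\in L^1$, not merely $v\in L^1$), and that passing the infimum through the recession operation is legitimate, which for unbounded feasible sets is exactly where lower boundedness of $h$ is used. Everything else is routine induction and appeals to the results already established above.
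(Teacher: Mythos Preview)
The paper does not prove this theorem; it simply cites \cite[Lemma~4]{pp12}. Your overall architecture --- backward induction simultaneously maintaining that $h_t$ is a lower bounded convex normal integrand and that $N_t$ is linear-valued, commuting recession with conditional expectation via \thref{thm:cerec}, and lifting elements of $N_t$ to elements of $\L$ by measurable selection so as to invoke linearity of $\L$ --- is correct and is essentially how the cited result is established.

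There is, however, a genuine gap in your inductive step. You justify that $\tilde h_t$ is a normal integrand by citing \cite[Proposition~14.47]{rw98} and ``using lower boundedness to ensure the infimum is not $-\infty$'', and you call the identity $\tilde h_t^\infty=\inf_{x_{t+1}}h_{t+1}^\infty$ ``a standard fact for lower bounded convex functions''. Neither holds under lower boundedness alone: Proposition~14.47 requires $\tilde h_t(\cdot,\omega)$ to be lower semicontinuous, and a closed convex lower-bounded function can have a non-closed infimal projection (e.g.\ $f=\delta_D$ with $D=\{(x,u)\in\reals^2\mid xu\ge 1,\ x>0,\ u>0\}$ gives $\inf_xf(x,u)=\delta_{(0,\infty)}(u)$). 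The correct input here is precisely your inductive hypothesis that $N_{t+1}$ is linear-valued, which via \thref{thm:ip} (equivalently \thref{lineality}, i.e.\ \cite[Theorem~9.2]{roc70a}) delivers both the normal-integrand property of $\tilde h_t$ \emph{and} the recession formula. You carry this hypothesis in your invariant but never deploy it at the point where it is needed; as written, the step fails. On the other hand, the \thref{thm:cerec} hypothesis you flag as an obstacle is easy here: lower boundedness of $\tilde h_t$ gives $v=0\in\dom E\tilde h_t^*\cap L^1$ with $(x^t\cdot 0)^-=0$, and feasibility of \eqref{sp} together with the partial recursion already built supplies an $\F_t$-measurable $x^t\in\dom E\tilde h_t$.
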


\thref{thm:dp0} below, shows that, if the Bellman equations \eqref{be} admit a solution $(h_t)_{t=0}^T$, then the optimal solutions $\bar x\in\N$ of \eqref{sp} are characterized by scenariowise minimization of $h_t$.

We will denote the projection of the set $\N$ of adapted strategies to its first $t$ components by
\begin{align*}
  \N^t :&= \{x^t\mid x\in\N\} = \left\{(x_{t'})_{t'=0}^t\midb x_{t'}\in L^0(\Omega,\F_{t'},P;\reals^{n_{t'}})\right\}.
\end{align*}
The lower boundedness condition in the following result will be relaxed in Section~\ref{sec:Lbexist} below.

\begin{theorem}\thlabel{thm:dp0}
Assume that $h$ is lower bounded, \eqref{sp} is feasible and that the Bellman equations \eqref{be} admit a solution $(h_t)_{t=0}^T$. Then each $h_t$ is lower bounded, 
\begin{align*}
\inf\eqref{sp} &=\inf_{x^t\in\N^t} Eh_t(x^t)
\end{align*}
for all $t=0,\dots,T$ and, moreover, an $\bar x\in\N$ solves \eqref{sp} if and only if
\begin{equation}\tag{OP}\label{oc0}
\bar x_t\in\argmin_{x_t\in\reals^{n_t}}h_t(\bar x^{t-1},x_t)\quad \text{a.s.}
\end{equation}
for all $t=0,\ldots,T$. If
\[
N_t(\omega):=\{x_t\in\reals^{n_t} \mid h_t^\infty(x^t,\omega)\le 0,\ x^{t-1}=0\}
\]
is linear-valued for all $t=0,\ldots,T$, then there exists an optimal $x\in\N$ with $x_t\perp N_t$ almost surely.
\end{theorem}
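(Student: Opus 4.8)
The plan is to establish the three assertions in turn: (i) lower boundedness of each $h_t$; (ii) the value identity $\inf\eqref{sp} = \inf_{x^t\in\N^t} Eh_t(x^t)$ for every $t$, together with the characterization \eqref{oc0} of optimal solutions; and (iii) existence of an optimal $x$ with $x_t\perp N_t$ when the $N_t$ are linear-valued. The backbone is a backward induction on $t$ that ``peels off'' one time step at a time, using the defining relations \eqref{be}, i.e.\ $h_T=E_Th$ and $h_t=E_t\tilde h_t$ with $\tilde h_t(x^t)=\inf_{x_{t+1}}h_{t+1}(x^t,x_{t+1})$.

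For (i): since $h\ge m$ with $m\in L^1$, monotonicity of conditional expectation of integrands (\thref{lem:cp1}, noting lower bounded integrands are L-bounded) gives $h_T=E_Th\ge E_Tm=m$ up to the a.s.\ null set. Inductively, if $h_{t+1}\ge m_{t+1}$ for some $m_{t+1}\in L^1$, then $\tilde h_t(x^t)=\inf_{x_{t+1}}h_{t+1}(x^t,x_{t+1})\ge m_{t+1}$, hence $\tilde h_t$ is lower bounded and L-bounded, and $h_t=E_t\tilde h_t\ge E_t m_{t+1}\in L^1$. So every $h_t$ is lower bounded.

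For (ii): I would prove $\inf_{x^{t}\in\N^{t}}Eh_{t}(x^t)=\inf_{x^{t+1}\in\N^{t+1}}Eh_{t+1}(x^{t+1})$ and $\inf_{x^{T}\in\N^{T}}Eh_{T}(x^T)=\inf\eqref{sp}$, then chain these. The step from $h_{t+1}$ to $\tilde h_t$ is an interchange of infimum and integration: $\inf_{x_{t+1}}Eh_{t+1}(x^t,x_{t+1})=E\tilde h_t(x^t)$ for fixed $x^t$, which is the interchange rule for normal integrands (Rockafellar--Wets, Theorem 14.60), legitimate because $h_{t+1}$ is a normal integrand and is lower bounded so the negative part causes no trouble; one also needs that the pointwise minimizer can be selected $\F_{t+1}$-measurably, which is the normal-integrand measurable selection theorem. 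The step from $\tilde h_t$ to $h_t=E_t\tilde h_t$ replaces the objective by its $\F_t$-conditional expectation without changing the infimum over the remaining $\F_t$-measurable variables $x^t$: for $x^t\in\N^t$, $E\tilde h_t(x^t)=E\big[E_t[\tilde h_t(x^t)]\big]=Eh_t(x^t)$ by the defining property of $E^{\F_t}$ of a normal integrand (the argument $x^t$ is $\F_t$-measurable, and $\tilde h_t(x^t)$ is quasi-integrable by lower boundedness). Composing from $t=T$ down to $0$ yields the value identity. For the characterization \eqref{oc0}: $\bar x$ is optimal iff it attains equality in every inequality used above; reading the chain backward, $\bar x$ optimal forces, at each stage, $\bar x_t$ to attain the infimum defining $\tilde h_{t-1}(\bar x^{t-1})$ a.s., which rearranges to $\bar x_t\in\argmin_{x_t}h_t(\bar x^{t-1},x_t)$ a.s.; conversely, if \eqref{oc0} holds for all $t$, running the equalities forward shows $Eh(\bar x)=\inf\eqref{sp}$.

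For (iii): given any optimal $\bar x$ (which exists because, once the value identity is in place, a minimizing sequence argument plus the selection theorem produces one — or one may invoke \thref{thm:belb} under \thref{ass:dplb}, whose hypotheses are implied here since linearity of the $N_t$ plus \thref{thm:belb}'s formula for them gives $\L$ linear), I would modify it time step by time step. At each $t$, decompose $\bar x_t=\bar x_t^{\parallel}+\bar x_t^{\perp}$ along $N_t(\omega)$ and its orthogonal complement; since $N_t$ consists of directions of recession of $h_t(\cdot\,,x^{t-1}=0)$ along which $h_t\le 0$ (in fact $h_t^\infty\le 0$ there), adding a direction in $N_t$ to an optimal strategy keeps it optimal and keeps \eqref{oc0} intact — here one uses that $h_t^\infty(\bar x^{t-1},u)\le 0$ for $u\in N_t$ by the structure in \thref{thm:belb}, i.e.\ recession directions are ``constraint-preserving'' regardless of the earlier coordinates because the relevant cone does not depend on them. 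Measurability of the projection $x_t\mapsto x_t^{\perp}$ onto the $\F_t$-measurable linear-valued set $N_t$ follows from measurable-selection / measurable-projection results for closed-convex-valued mappings. Proceeding forward from $t=0$ yields an optimal $x$ with $x_t\perp N_t$ a.s.\ for all $t$.

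The main obstacle is the rigorous handling of the infimum--integral interchange together with measurable selection of the stagewise minimizers in the presence of possibly $+\infty$ values (infeasibility) and only one-sided integrability: one must check $\tilde h_t$ is genuinely a normal integrand — this is exactly the content of the second displayed form of \eqref{be}, assumed to hold — and that ``$\inf$ commutes with $E$'' is valid, which hinges on the normal-integrand interchange theorem and on lower boundedness to rule out $-\infty-(+\infty)$ pathologies. The recession/orthogonality bookkeeping in (iii), i.e.\ verifying that perturbing $\bar x_t$ by an $N_t$-valued random variable neither increases $Eh$ nor breaks \eqref{oc0} at later stages, is the second delicate point and is where the explicit description of $N_t$ from \thref{thm:belb} (its independence of the $x^{t-1}$-block in the recession condition) does the real work.
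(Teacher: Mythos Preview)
Your arguments for (i) and (ii) are essentially the paper's own: lower boundedness of $h_t$ by induction via \thref{lem:cp1}, the value identity by combining the interchange rule \cite[Theorem~14.60]{rw98} for $E\tilde h_{t-1}(x^{t-1})=\inf_{x_t\in L^0(\F_t)}Eh_t(x^{t-1},x_t)$ with $E\tilde h_{t-1}=Eh_{t-1}$ on $\N^{t-1}$, and the characterization \eqref{oc0} by reading off when the chain of inequalities is tight (again via \cite[Theorem~14.60]{rw98}).

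Part (iii), however, has a genuine gap. You begin with ``given any optimal $\bar x$'', but existence of an optimal solution is precisely the content of (iii); it is asserted \emph{only} under the linearity hypothesis on $N_t$, not before. Neither of your justifications works: a ``minimizing sequence argument'' has no compactness to lean on in $\N$, and invoking \thref{thm:belb} is misdirected, since that result produces solutions of \eqref{be} and linearity of $N_t$ \emph{from} \thref{ass:dplb}, not optimal solutions of \eqref{sp}, and the reverse implication $N_t$ linear $\Rightarrow$ $\L$ linear that you allude to is an external result, not available here. Even granting it, you would still be using the conclusion of the theorem you are proving.

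The paper avoids this circularity by constructing the optimal $x$ directly, forward in $t$, via \thref{thm:ip}: linearity of $N_t$ is exactly the hypothesis guaranteeing that $\argmin_{x_t}h_t(\bar x^{t-1},x_t)$ is \emph{nonempty} (not merely stable under $N_t$-translations) and that one can measurably select $\bar x_t$ in $\argmin\cap N_t^\perp$. This produces an adapted process satisfying \eqref{oc0}, which is then optimal by part (ii). Your projection idea can be salvaged, but only by recognizing that at each stage the selection already comes from \thref{thm:ip} with the $N_t^\perp$ constraint built in; there is no prior optimal $\bar x$ to project.
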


\begin{proof}
Let $x\in\N$. By \cite[Theorem~14.60]{rw98},
\begin{align*}
E \tilde h_{t-1}(x^{t-1}) &= \inf_{x_{t}\in L^0(\F_{t})}E h_{t}(x^{t-1},x_{t}).
\end{align*}
Since $\tilde h_{t-1}$ is bounded from below, $E\tilde h_{t-1} =Eh_{t-1}$ on $\N^{t-1}$ and thus
\[
\inf_{x^{t-1}\in\N^{t-1}} Eh_{t-1}(x^{t-1}) = \inf_{x^{t}\in\N^{t}} Eh_{t}(x^{t}).
\]
By induction, $\inf\eqref{sp} =\inf_{x^t\in\N^t} Eh_t(x^t)$.

To prove the second claim, note first that
\[
\bar x^{t}\in \argmin_{x^{t}\in\N^{t}} Eh_{t}(x^{t})
\]
 if and only if
\[
\bar x^{t-1}\in\argmin_{x^{t-1}\in\N^{t-1}} Eh_{t-1}(x^{t-1})\quad\text{and}\quad \bar x_{t} \in\argmin_{x_t \in L^0(\F_{t})} Eh_{t}(\bar x^{t-1},x_t),
\]
where, by the second part of \cite[Theorem~14.60]{rw98}, the second inclusion means that
\[
\bar x_t\in\argmin_{x_t\in\reals^{n_t}}h_t(\bar x^{t-1},x_t)\quad \text{a.s.}
\]
An $\bar x\in\N$ solves \eqref{sp} if and only if $\bar x$ minimizes $Eh_T$. A backward recursion shows that optimal solutions satisfy \eqref{oc0}. The converse follows from a forward recursion.

Applying \thref{thm:ip} recursively forward in time shows that \eqref{oc0} has an $\F_t$-measurable solution $\bar x_t\perp N_t$ almost surely for all $t=0,\ldots,T$. The last claim thus follows from the second one.
\end{proof}

Theorem~\ref{thm:dp0} can be thought of as a discrete-time version of a ``verification theorem'' which, in the context of continuous-time stochastic control gives conditions under which solutions of the Hamilton-Jacobi-Bellman equation characterize the optimum values and solutions of optimal control problems.

\section{Dynamic programming for L-bounded objectives}\label{sec:Lbexist}
 
This section extends the results of the previous section by relaxing the lower boundedness assumption on $h$. The extensions are largely based on \thref{lem:perp} which first appeared in \cite{per16} where it was used to extend the main result of \cite{pp12} on the lower semicontinuity of the optimum value function of \eqref{sp}. The extension is based on the interplay of the space $\N$ of adapted strategies with the set
\[
\N^\perp:=\{v\in L^1\mid E[x\cdot v]=0\ \forall x\in\N^\infty\},
\]
where $\N^\infty:=\N\cap L^\infty$. The following gives an alternative expression for $\N^\perp$.

\begin{lemma}\thlabel{perpchar}
$\N^\perp = \{v\in L^1\mid E_t[v_t]=0\quad t=0,\ldots,T\}$.
\end{lemma}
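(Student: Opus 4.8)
The plan is to prove the two inclusions separately, both by reduction to the defining property of conditional expectation via bounded test functions. Throughout, write $x\cdot v=\sum_{t=0}^T x_t\cdot v_t$ for $x\in\N$ and $v\in L^1$, and note that each summand $x_t\cdot v_t$ is integrable whenever $x\in\N^\infty$, since $x_t\in L^\infty$ and $v_t\in L^1$.

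For the inclusion ``$\supseteq$'', suppose $v\in L^1$ satisfies $E_t[v_t]=0$ for all $t$. Given any $x\in\N^\infty$, for each $t$ the component $x_t$ is bounded and $\F_t$-measurable, so by the pull-out property of conditional expectation $E[x_t\cdot v_t]=E\big[x_t\cdot E_t[v_t]\big]=0$. Summing over $t=0,\ldots,T$ gives $E[x\cdot v]=0$, hence $v\in\N^\perp$.

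For the inclusion ``$\subseteq$'', suppose $v\in\N^\perp$ and fix $t\in\{0,\ldots,T\}$. For any bounded $\F_t$-measurable $\reals^{n_t}$-valued function $w$, the process that equals $w$ in its $t$-th component and $0$ elsewhere lies in $\N^\infty$, so $E[w\cdot v_t]=0$, and by the pull-out property this reads $E\big[w\cdot E_t[v_t]\big]=0$. Since this holds for every $w\in L^\infty(\F_t;\reals^{n_t})$ — in particular for $w=e_i\one_A$ with $A\in\F_t$ and $e_i$ a coordinate vector — the $\F_t$-measurable integrable random variable $E_t[v_t]$ must vanish almost surely. As $t$ was arbitrary, $v$ belongs to the right-hand set. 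There is no serious obstacle here; the only point requiring care is the routine fact that an element of $L^1(\F_t)$ annihilated by all of $L^\infty(\F_t)$ is a.s.\ zero, which we invoke coordinatewise.
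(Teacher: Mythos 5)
Your proposal is correct and follows essentially the same route as the paper: decompose $E[x\cdot v]=\sum_t E[x_t\cdot v_t]$, test with processes supported at a single time $t$, and use the pull-out property of conditional expectation (the paper's \thref{lem:ce}) to reduce to $E_t[v_t]=0$. You merely spell out the two inclusions in more detail than the paper does.
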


\begin{proof}
We have
\[
E[x\cdot v] = \sum_{t=0}^TE[x_t\cdot v_t]
\]
so $v\in\N^\perp$ if and only if $E[x_t\cdot v_t]=0$ for all $x_t\in L^\infty(\F_t)$. The claim now follows from \thref{lem:ce}.
\end{proof}

More interestingly, we have the following.

\begin{lemma}\thlabel{lem:perp}
Let $x\in\N$ and $v\in\N^\perp$. If $E[x\cdot v]^+\in L^1$, then $E[x\cdot v]=0$.
\end{lemma}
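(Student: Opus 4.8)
The plan is to exploit the filtration structure together with the tower property of conditional expectation, reducing the claim to the classical fact that a quasi-integrable random variable with nonnegative conditional expectation equal to zero is itself zero. First I would write $x\cdot v=\sum_{t=0}^T x_t\cdot v_t$ and, using \thref{perpchar}, recall that $E_t[v_t]=0$ for every $t$. The difficulty is that $x\notin L^\infty$ in general, so I cannot directly pull $x_t$ out of the conditional expectation $E_t[x_t\cdot v_t]$ nor invoke $v\in\N^\perp$ termwise; the hypothesis only controls $E[x\cdot v]^+$, i.e.\ the positive part of the \emph{whole} sum.

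The key step is a truncation/monotone argument run \emph{backwards in time}. Fix $\nu\in\naturals$ and set $A^\nu=\{|x|\le\nu\}$; this is not adapted, so instead I would truncate componentwise, writing $x^\nu_t=\one_{\{|x_t|\le\nu\}}x_t\in L^\infty(\F_t)$, and first handle the last term. Since $x^\nu_T\in L^\infty(\F_T)$, \thref{lem:ce} gives $E_T[x^\nu_T\cdot v_T]=x^\nu_T\cdot E_T[v_T]=0$, hence $E[x^\nu_T\cdot v_T]=0$; and similarly $E[x^\nu_t\cdot v_t]=0$ for each $t$ by \thref{lem:ce} and \thref{perpchar}. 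Thus $E[x^\nu\cdot v]=0$ for the fully truncated process $x^\nu$. The obstacle is then to pass to the limit $\nu\to\infty$: one must upgrade $E[x^\nu\cdot v]=0$ to $E[x\cdot v]=0$ using only that $E[x\cdot v]^+\in L^1$, i.e.\ that $(x\cdot v)^+$ is integrable.

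To carry out this passage I would argue as follows. Because $(x\cdot v)^+\in L^1$ and $|x^\nu\cdot v|\le|x\cdot v|$ pointwise with $x^\nu\cdot v\to x\cdot v$ a.s., the positive parts $(x^\nu\cdot v)^+$ are dominated by $(x\cdot v)^+\in L^1$, so by dominated convergence $E[(x^\nu\cdot v)^+]\to E[(x\cdot v)^+]$, which is finite. Since $E[x^\nu\cdot v]=0$, we get $E[(x^\nu\cdot v)^-]=E[(x^\nu\cdot v)^+]\le E[(x\cdot v)^+]<\infty$, a bound uniform in $\nu$. Now $(x^\nu\cdot v)^-\to(x\cdot v)^-$ a.s., so Fatou's lemma gives $E[(x\cdot v)^-]\le\liminf_\nu E[(x^\nu\cdot v)^-]=E[(x\cdot v)^+]<\infty$; hence $x\cdot v\in L^1$ and $|x^\nu\cdot v|$ is dominated by the integrable function $|x\cdot v|$, so dominated convergence applies to the full quantity: $E[x\cdot v]=\lim_\nu E[x^\nu\cdot v]=0$. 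The main obstacle is precisely this one-sided integrability bookkeeping — getting a uniform bound on the negative parts of the truncations from the sole assumption $E[x\cdot v]^+\in L^1$ — after which the conclusion is immediate.
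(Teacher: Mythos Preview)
There is a genuine gap in your limiting argument. With the componentwise truncation $x^\nu_t=\one_{\{|x_t|\le\nu\}}x_t$, the indicators differ across $t$, so the pointwise bound $|x^\nu\cdot v|\le|x\cdot v|$ you invoke is false in general. For a concrete failure take $T=1$, $x_0\cdot v_0=10$, $x_1\cdot v_1=-10$ on some scenario; then $x\cdot v=0$, but if $|x_0|\le\nu<|x_1|$ there, $x^\nu\cdot v=10$. The same example shows $(x^\nu\cdot v)^+\not\le(x\cdot v)^+$, so neither of your dominated-convergence steps is justified. Your identity $E[x^\nu\cdot v]=0$ is correct, and $x^\nu\cdot v\to x\cdot v$ a.s.\ is correct, but without a uniform integrable majorant you cannot pass to the limit from the hypothesis $(x\cdot v)^+\in L^1$ alone.

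The paper circumvents exactly this obstruction by an induction on $T$. In the base case $T=0$ there is a single component, so $x^\nu=\one_{\{|x|\le\nu\}}x$ and the domination $|x^\nu\cdot v|\le|x\cdot v|$ is genuine; your Fatou/dominated-convergence bookkeeping then goes through. For the inductive step one truncates \emph{only} the first component but applies that single indicator to the whole process, $x^\nu:=\one_{\{|x_0|\le\nu\}}x$, so that $x^\nu\cdot v=\one_{\{|x_0|\le\nu\}}(x\cdot v)$ and the domination again holds. Now $x^\nu_0\in L^\infty(\F_0)$ gives $E[x^\nu_0\cdot v_0]=0$, while the remaining tail $\sum_{t\ge 1}x^\nu_t\cdot v_t$ has integrable positive part (bounded by $(x\cdot v)^++(x^\nu_0\cdot v_0)^-$), so the induction hypothesis applies to it. This yields $E[x^\nu\cdot v]=0$, and the $T=0$ limiting argument finishes. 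The point is that a single $\F_0$-measurable indicator preserves both adaptedness and the pointwise domination; your componentwise truncation preserves adaptedness but destroys the domination.
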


\begin{proof}
Assume first that $T=0$. Defining $x^\nu:=\one_{\{|x|\le \nu\}}x$, we have $x^\nu\in \N^\infty$, so $E[x^\nu\cdot v]=0$ and thus, $E[x^\nu\cdot v]^-=E[x^\nu\cdot v]^+$. Since $[x^\nu\cdot v]\le[x\cdot v]^+\in L^1$, Fatou's lemma gives
\[
E[x\cdot v]^- \le \liminf_{\nu\to\infty} E[x^\nu\cdot v]^- = \liminf_{\nu\to\infty} E[x^\nu\cdot v]^+ \le E[x\cdot v]^+
\]
so $[x\cdot v]^-\in L^1$ as well. Since $|x^\nu\cdot v|\le |x\cdot v|$, dominated convergence theorem gives $E[x\cdot v] = \lim E[x^\nu\cdot v]=0$.

Assume now that the claim holds for every $(T-1)$-period model. Defining $x^\nu :=\one_{\{|x_0|\le \nu\}} x$, we have
\[
[\sum_{t=1}^T x^\nu_t\cdot v_t]^+ = [x^\nu\cdot v-x^\nu_0\cdot v_0]^+ \le [x^\nu\cdot v]^+ +[x^\nu_0\cdot v_0]^- \le [x\cdot v]^+ +[x^\nu_0\cdot v_0]^-,
\]
where the right side is integrable. Thus, $E[\sum_{t=1}^T x^\nu_t\cdot v_t]=0$, by the induction hypothesis. Since $x^\nu_0\in L^\infty$, we also have $E[x_0^\nu\cdot v_0]=0$ so $E[x^\nu\cdot v]=0$. This implies $E[x\cdot v]=0$ just like in the case $T=0$.
\end{proof}

The following illustrates \thref{lem:perp} with the stochastic integral of an adapted process with respect to a martingale; see \cite{js98}. 

\begin{example}\thlabel{ex:siperp}
Assume that $n_t=d$ for all $t$ and let $s$ be a $d$-dimensional martingale, i.e.\ an adapted integrable stochastic process such that $E_t[\Delta s_{t+1}]=0$ for all $t$. If $x\in\N$ is such that
\[
E[\sum_{t=0}^{T-1} x_t\cdot\Delta s_{t+1}]^+<\infty,
\]
then $E[\sum_{t=0}^{T-1} x_t\cdot\Delta s_{t+1}]=0$. This follows from Lemma~\ref{lem:perp} with $v\in\N^\perp$ defined by $v_t=\Delta s_{t+1}$.
\end{example}

The extensions of \thref{thm:belb} and \thref{thm:dp0} below are based on the following lemma that allows us to reduce a more general problem to one with a lower bounded integrand.

\begin{lemma}\thlabel{lem:hkbe}
Assume that there exists $p\in\N^\perp$ such that the normal integrand $k(x,\omega):=h(x,\omega)-x\cdot p(\omega)$ is lower bounded. Then  \eqref{be} has a solution for $h$ if and only if \eqref{be} has a solution for $k$. In this case, the solutions are unique and related by
\begin{equation}\label{eq:lbt}
k_t(x^t,\omega)=h_t(x^t,\omega)-x^t\cdot E_t p_t(\omega).
\end{equation}
\end{lemma}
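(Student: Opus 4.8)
The plan is to construct an explicit bijection between the solutions of \eqref{be} for $h$ and the solutions of \eqref{be} for $k$, namely the substitution \eqref{eq:lbt}, and to read off existence, uniqueness and the stated relation from it. Write $p^t:=(p_0,\dots,p_t)$ and $E_sp^t:=(E_sp_0,\dots,E_sp_t)$, so that $x^t\cdot E_sp^t=\sum_{r=0}^t x_r\cdot E_sp_r$. Two preliminary facts drive everything: since $p\in\N^\perp$, \thref{perpchar} gives $E_tp_t=0$ almost surely for each $t$; and since $k$ is lower bounded, say $k\ge m\in L^1$, the identity $h=k+x\cdot p$ gives $h\ge m-|x|\,|p|$, so $h$ is L-bounded (hence all the conditional expectations below exist by \thref{thm:existce} and can be identified by testing against $L^\infty$ via \thref{lem:citest}).

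For the forward implication, start from a solution $(h_t)_{t=0}^T,(\tilde h_t)_{t=0}^{T-1}$ of \eqref{be} for $h$ and set
\[
k_t(x^t,\omega):=h_t(x^t,\omega)-x^t\cdot E_tp^t(\omega),\qquad \tilde k_t(x^t,\omega):=\tilde h_t(x^t,\omega)-x^t\cdot E_{t+1}p^t(\omega),
\]
which are normal integrands (a normal integrand minus a finite Caratheodory integrand, using $p\in L^1$), with $k_t$ being $\F_t$-measurable. I would check the three relations of \eqref{be} for $k$ by backward induction on $t$, carrying along the auxiliary claim that $k_t\ge E_tm$ and $\tilde k_t\ge E_{t+1}m$; in particular all of these integrands are L-bounded, which is what legitimizes \thref{lem:citest} and the linearity of conditional expectation. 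Concretely: $k_T=E_Tk$ follows from $h_T=E_Th$ because for $y\in L^\infty(\F_T)$ one has $k(y)=h(y)-y\cdot p$ with $h(y)^-$ and $y\cdot p$ integrable, so $E_T[k(y)]=h_T(y)-y\cdot E_Tp=k_T(y)$; then $k_T=E_Tk\ge E_Tm$ by \thref{lem:cp1}. For the recursion step, the perturbation that $k_{t+1}$ attaches to the variable being minimized is $x_{t+1}\cdot E_{t+1}p_{t+1}$, which vanishes almost surely because $p\in\N^\perp$; hence $\inf_{x_{t+1}}k_{t+1}=\tilde h_t-x^t\cdot E_{t+1}p^t=\tilde k_t$, and $\tilde k_t\ge E_{t+1}m$ since $k_{t+1}\ge E_{t+1}m$. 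Finally $k_t=E_t\tilde k_t$ follows from $h_t=E_t\tilde h_t$ by testing against $y\in L^\infty(\F_t)$ and using the tower property $E_tE_{t+1}=E_t$, and $k_t\ge E_tm$ by monotonicity once more.

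The converse is symmetric. If $(k_t)$ solves \eqref{be} for $k$, the same backward argument — now with no perturbation present — first yields $k_t\ge E_tm$ and $\tilde k_t\ge E_{t+1}m$, hence L-boundedness; then $h_t:=k_t+x^t\cdot E_tp^t$ and $\tilde h_t:=\tilde k_t+x^t\cdot E_{t+1}p^t$ satisfy \eqref{be} for $h$ by literally the same three computations with the sign of $p$ reversed, using that $h$ and each $h_t,\tilde h_t$ (bounded below by $E_tm-|x^t|\,|E_tp^t|$) are L-bounded. Uniqueness is then automatic in both cases: $h_T=E_Th$ is almost surely unique, $\tilde h_t=\inf_{x_{t+1}}h_{t+1}$ is determined by $h_{t+1}$, and $h_t=E_t\tilde h_t$ is almost surely unique, so the backward recursion pins $(h_t)$ down up to null sets, and likewise $(k_t)$; the relation \eqref{eq:lbt} is then forced by the forward construction together with this uniqueness.

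I expect the only real difficulty to be the bookkeeping around conditional expectations: one must ensure that ``$E_t$ of a difference'' and ``$\inf_{x_{t+1}}$ of a linearly perturbed integrand'' behave as the formula predicts. The auxiliary bounds $k_t\ge E_tm$ are carried precisely so that every integrand on which $E_t$ acts is L-bounded — making \thref{lem:citest} and linearity applicable — and the membership $p\in\N^\perp$, equivalently $E_{t+1}p_{t+1}=0$, is exactly what makes the linear perturbation drop out of the inner minimization, which is the single place where this hypothesis is genuinely used.
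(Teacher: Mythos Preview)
Your proposal is correct and follows essentially the same approach as the paper: construct the explicit bijection $k_t=h_t-x^t\cdot E_tp^t$, $\tilde k_t=\tilde h_t-x^t\cdot E_{t+1}p^t$, use $E_{t+1}p_{t+1}=0$ (from \thref{perpchar}) to make the linear perturbation drop out of the inner minimization, and carry L-bounds through the recursion so that \thref{lem:citest,thm:condoperations} apply at the conditional-expectation step. The only cosmetic difference is organizational: the paper first runs a separate backward induction to establish $h_t(x^t)\ge x^t\cdot E_tp^t-E_tm$ and $\tilde h_t(x^t)\ge x^t\cdot E_{t+1}p^t-E_{t+1}m$ before defining $(k_t)$, and in the converse direction invokes \thref{thm:belb} for the lower boundedness of $(k_t)$, whereas you fold the bounds $k_t\ge E_tm$, $\tilde k_t\ge E_{t+1}m$ into a single combined induction in both directions.
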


\begin{proof}
If \eqref{be} admits a solution for $h$, there exist sequences $(\tilde h_t)_{t=0}^{T-1}$ and $(h_t)_{t=0}^T$ of normal integrands that satisfy \eqref{be}. Since $k$ is lower bounded, there exists $m\in L^1$ such that
\[
h(x,\omega)\ge x\cdot p(\omega)-m(\omega).
\]
We claim that,
\begin{align}
  h_t(x^t,\omega) &\ge x^t\cdot [E_t p^t](\omega)- [E_t m](\omega)\label{eq:hlb}\\
  \intertext{for $t=T,\ldots,0$ and}
  \tilde h_t(x^t,\omega) &\ge x^t\cdot [E_{t+1} p^t](\omega)- [E_{t+1} m](\omega)\label{eq:tildehlb}
\end{align}
for $t=T-1,\ldots,0$. Indeed, if \eqref{eq:hlb} holds, then, since $E_tp_t=0$, we get \eqref{eq:tildehlb} for $t-1$. If \eqref{eq:tildehlb} holds, then \eqref{eq:hlb} follows from \thref{lem:cp1,ex:celq}. Since $h_T=E_Th$, \eqref{eq:hlb} holds for $T$ so the claim follows by induction on $t$.

The lower bounds in \eqref{eq:tildehlb} and \eqref{eq:hlb} imply that the sequences $(\tilde k_t)_{t=0}^{T-1}$ and $(k_t)_{t=0}^T$ defined by
\begin{align*}
  \tilde k_t(x^t,\omega) &:= \tilde h_t(x^t,\omega)-x^t\cdot [E_{t+1} p^t](\omega),\\
  k_t(x^t,\omega) &:= h_t(x^t,\omega)-x^t\cdot [E_t p^t](\omega)
\end{align*}
satisfy \eqref{be} for $k$. Indeed, by \thref{thm:condoperations,ex:celq}, $E_t\tilde k_t=k_t$ while, since $E_t p_t=0$,
\begin{align*}
  \tilde k_{t-1}(x^{t-1},\omega) &= \tilde h_{t-1}(x^{t-1},\omega)-x^{t-1}\cdot [E_t p^{t-1}](\omega),\\
  &=\inf_{x_t\in\reals^{n_t}}\{h_t(x^t,\omega)-x^t\cdot [E_t p^t](\omega)\}\\
  &= \inf_{x_t\in\reals^{n_t}}k_t(x^{t-1},x_t,\omega).
\end{align*}
Thus, $(k_t)_{t=1}^T$ solves \eqref{be} for $k$. Conversely, assume that $(k_t)_{t=0}^T$ and $(k_t)_{t=0}^{T-1}$ solve \eqref{be} for $k$. By \eqref{thm:belb}, $k_t$ are lower bounded. Similar argument as above, then shows that the functions
\begin{align*}
  \tilde h_t(x^t,\omega) &:= \tilde k_t(x^t,\omega)+x^t\cdot [E_{t+1} p^t](\omega),\\
  h_t(x^t,\omega) &:= k_t(x^t,\omega)+x^t\cdot [E_t p^t](\omega)
\end{align*}
satisfy \eqref{be} for $h$. This completes the proof.
\end{proof}

The following extends the existence result in \thref{thm:belb} by relaxing the lower boundedness assumption on $h$.

\begin{theorem}\thlabel{thm:be}
Assume that there exists $p\in\N^\perp$ such that the normal integrand $k(x,\omega):=h(x,\omega)-x\cdot p(\omega)$ is lower bounded and
\[
\{x\in\N\mid k^\infty(x)\le 0 \text{ a.s.}\}
\]
is a linear space. Then \eqref{be} has a unique solution $(h_t)_{t=0}^T$ of L-bounded normal integrands and
\[
N_t(\omega):=\{x_t\in\reals^{n_t} \mid h_t^\infty(x^t,\omega)\le 0,\ x^{t-1}=0\}
\]
are linear-valued for all $t$. 
\end{theorem}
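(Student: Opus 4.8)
The plan is to reduce everything to the lower bounded case via \thref{lem:hkbe} and \thref{thm:belb}, and then transfer the conclusions back. First I would observe that the hypotheses are exactly those of \thref{lem:hkbe} together with the requirement that $\L_k:=\{x\in\N\mid k^\infty(x)\le 0\text{ a.s.}\}$ be a linear space. Since $k$ is lower bounded by assumption and \eqref{sp} is feasible for $h$ (hence, after the shift by $p\in\N^\perp$, also for $k$ — feasibility is preserved because $x\cdot p$ is finite-valued for $x\in\N$), \thref{ass:dplb} holds for $k$. Therefore \thref{thm:belb} applies to $k$: the Bellman equations \eqref{be} for $k$ have a unique solution $(k_t)_{t=0}^T$ consisting of lower bounded normal integrands, and the sets
\[
N_t^k(\omega):=\{x_t\in\reals^{n_t}\mid k_t^\infty(x^t,\omega)\le 0,\ x^{t-1}=0\}
\]
are linear-valued for all $t$.

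Next I would invoke \thref{lem:hkbe} in the converse direction: since \eqref{be} has a solution for $k$, it has a (unique) solution $(h_t)_{t=0}^T$ for $h$, related to $(k_t)$ by \eqref{eq:lbt}, namely $h_t(x^t,\omega)=k_t(x^t,\omega)+x^t\cdot[E_tp^t](\omega)$. To see that each $h_t$ is L-bounded, note that $k_t$ is lower bounded (in particular L-bounded) and $E_tp^t\in L^1$ by Jensen's inequality applied componentwise to $p\in\N^\perp\subseteq L^1$; hence $h_t(x^t,\omega)\ge -m_t(\omega)+x^t\cdot[E_tp^t](\omega)\ge -|E_tp^t(\omega)|\,|x^t|-m_t(\omega)$, which is precisely L-boundedness. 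Uniqueness of the $h_t$ is the uniqueness statement already contained in \thref{lem:hkbe}.

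It remains to show the sets $N_t$ associated with $(h_t)$ are linear-valued. Here the key point is that the recession function is unaffected by the linear perturbation: for fixed $\omega$ and fixed $x^{t-1}=0$, the map $x_t\mapsto h_t(0,x_t,\omega)$ differs from $x_t\mapsto k_t(0,x_t,\omega)$ by the linear term $x_t\cdot[E_tp_t](\omega)$, and since $E_tp_t=0$ by \thref{perpchar}, in fact $h_t(0,x_t,\omega)=k_t(0,x_t,\omega)$ on this slice; more generally $h_t^\infty(x^t,\omega)=k_t^\infty(x^t,\omega)+x^t\cdot[E_tp^t](\omega)$ since the recession function of a sum with a linear functional adds that functional, so restricting to $x^{t-1}=0$ and using $E_tp_t=0$ gives $h_t^\infty(0,x_t,\omega)=k_t^\infty(0,x_t,\omega)$. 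Consequently $N_t(\omega)=N_t^k(\omega)$, which is a linear subspace by \thref{thm:belb}. This completes the proof.

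The main obstacle I anticipate is purely bookkeeping: making sure the recession-function identity $h_t^\infty=k_t^\infty+(\text{linear in }x^t)$ is applied on the correct slice $x^{t-1}=0$ and that the vanishing of $E_tp_t$ (as opposed to the full $E_tp^t$) is what kills the perturbation there — the perturbation does not vanish on all of $\reals^{n^t}$, only on the slice relevant to $N_t$. Everything else is a direct citation of \thref{lem:hkbe} and \thref{thm:belb}.
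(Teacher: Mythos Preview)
Your proposal is correct and follows essentially the same route as the paper: apply \thref{thm:belb} to $k$, then use \thref{lem:hkbe} to transfer the solution back to $h$, and finally observe that $E_tp_t=0$ forces $N_t=N_t^k$. You supply more detail than the paper on the L-boundedness of $h_t$ and on the recession-function identity, but the architecture is identical. One small remark: you invoke feasibility of \eqref{sp}, which is not actually listed among the hypotheses of \thref{thm:be}; the paper's own proof simply cites \thref{thm:belb} without addressing this, so this is at worst a shared cosmetic gap rather than a flaw in your argument.
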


\begin{proof}
By \thref{thm:belb}, the Bellman equations associated with $k$ have a unique solution $(k_t)_{t=0}^T$ of lower bounded normal integrands and the measurable mappings
\[
\hat N_t(\omega):=\{x_t\in\reals^{n_t} \mid k_t^\infty(x^t,\omega)\le 0,\ x^{t-1}=0\}
\]
are linear-valued for all $t$. Thus, by \thref{lem:hkbe}, \eqref{be} associated with $h$ has a unique solution of L-bounded normal integrands and, since $E_tp_t=0$, we have $N_t=\hat N_t$ almost surely.
\end{proof}

The following generalizes \thref{thm:dp0} by relaxing the lower boundedness assumption on $h$.

\begin{theorem}\thlabel{thm:dpLb}
Assume that there exists $p\in\N^\perp$ such that the normal integrand $k(x,\omega):=h(x,\omega)-x\cdot p(\omega)$ is lower bounded and $Ek(x)=Eh(x)$ for all $x\in\N$. If \eqref{sp} is feasible and the Bellman equations \eqref{be} admit a solution $(h_t)_{t=0}^T$, then
\begin{align*}
\inf\eqref{sp} &=\inf_{x^t\in\N^t} Eh_t(x^t)
\end{align*}
for all $t=0,\dots,T$ and, moreover, an $\bar x\in\N$ solves \eqref{sp} if and only if
\begin{equation}\tag{OP}\label{oc0}
\bar x_t\in\argmin_{x_t\in\reals^{n_t}}h_t(\bar x^{t-1},x_t)\quad \text{a.s.}
\end{equation}
for all $t=0,\ldots,T$. If
\[
N_t(\omega):=\{x_t\in\reals^{n_t} \mid h_t^\infty(x^t,\omega)\le 0,\ x^{t-1}=0\}
\]
is linear-valued for all $t=0,\ldots,T$, then there exists an optimal $x\in\N$ with $x_t\perp N_t$ almost surely.
\end{theorem}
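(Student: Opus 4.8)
The plan is to reduce the statement to \thref{thm:dp0} applied to the lower bounded normal integrand $k$. Write $(SP_k)$ for the problem of minimizing $Ek$ over $\N$. Since $Ek=Eh$ on $\N$, the problems \eqref{sp} and $(SP_k)$ have the same feasible points, the same optimal value and the same optimal solutions; in particular $(SP_k)$ is feasible. Because \eqref{be} for $h$ admits a solution $(h_t)_{t=0}^T$, \thref{lem:hkbe} supplies a (necessarily unique) solution $(k_t)_{t=0}^T$ of \eqref{be} for $k$, related to $(h_t)$ by \eqref{eq:lbt}; since $E_tp_t=0$, this reads
\[
k_t(x^t,\omega)=h_t(x^t,\omega)-x^{t-1}\cdot(E_tp^{t-1})(\omega),
\]
so $h_t$ and $k_t$ differ only by a random linear functional of $x^{t-1}$ with $L^1$ coefficient $E_tp^{t-1}$. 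Applying \thref{thm:dp0} to $k$ yields: each $k_t$ is lower bounded, $\inf(SP_k)=\inf_{x^t\in\N^t}Ek_t(x^t)$ for every $t$, an $\bar x\in\N$ solves $(SP_k)$ if and only if $\bar x_t\in\argmin_{x_t}k_t(\bar x^{t-1},x_t)$ a.s.\ for all $t$, and, when the sets $\hat N_t$ built from $k_t^\infty$ are linear-valued, $(SP_k)$ has an optimal solution $\bar x$ with $\bar x_t\perp\hat N_t$ a.s.

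The optimality characterization transfers at once: $\bar x$ solves \eqref{sp} iff it solves $(SP_k)$, and since $x_t\mapsto k_t(\bar x^{t-1},x_t,\omega)$ and $x_t\mapsto h_t(\bar x^{t-1},x_t,\omega)$ differ by the term $\bar x^{t-1}\cdot(E_tp^{t-1})(\omega)$, which does not depend on $x_t$, they have the same argmin almost surely; this is \eqref{oc0}. For the recession sets, linearity of $h_t-k_t$ in $x^t$ gives $k_t^\infty(x^t,\omega)=h_t^\infty(x^t,\omega)-x^{t-1}\cdot(E_tp^{t-1})(\omega)$, and restricting to $x^{t-1}=0$ shows $\hat N_t=N_t$ almost surely; hence the linearity hypothesis on $N_t$, together with the last assertion of \thref{thm:dp0} for $k$, produces an optimal $\bar x$ with $\bar x_t\perp N_t$ a.s.

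For the value identity I would argue by a two-sided estimate rather than by proving $Eh_t=Ek_t$ pointwise on $\N^t$. For $x^t\in\N^t$ with $Eh_t(x^t)<\infty$, the random variable $x^{t-1}\cdot E_tp^{t-1}=h_t(x^t)-k_t(x^t)$ has integrable positive part, being dominated by $[h_t(x^t)]^+$ plus the negative part of an $L^1$ lower bound of $k_t$, and it equals $x\cdot w$ (after extending $x^t$ to some $x\in\N$) for the element $w\in\N^\perp$ with $w_s=E_tp_s$ for $s<t$ and $w_s=0$ otherwise; indeed $w\in\N^\perp$ by \thref{perpchar}, since $E_s[w_s]=E_s[p_s]=0$ for $s<t$ by the tower property and $p\in\N^\perp$. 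Thus \thref{lem:perp} forces $E[x\cdot w]=0$, whence $Ek_t(x^t)=Eh_t(x^t)$ there; consequently $Ek_t\le Eh_t$ on all of $\N^t$, so $\inf\eqref{sp}=\inf(SP_k)=\inf_{x^t\in\N^t}Ek_t(x^t)\le\inf_{x^t\in\N^t}Eh_t(x^t)$. The reverse inequality $\inf_{x^t\in\N^t}Eh_t(x^t)\le\inf\eqref{sp}$ follows, exactly as in the proof of \thref{thm:dp0}, from the easy direction of the recursion: for a feasible $x$ one has $Eh_T(x^T)=Eh(x)$ since $h_T=E_Th$, and $\tilde h_{t-1}(x^{t-1})\le h_t(x^t)$ together with $h_{t-1}=E_{t-1}\tilde h_{t-1}$ gives $Eh_{t-1}(x^{t-1})\le Eh_t(x^t)$, hence $Eh_t(x^t)\le Eh(x)$ for every $t$.

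The only genuinely delicate point is this last identity: one must track quasi-integrability with care so that the linear correction $x^{t-1}\cdot E_tp^{t-1}$ may be discarded on the strategies that matter, and this is precisely the role of \thref{lem:perp} and \thref{perpchar}. Everything else is the same reduction via \thref{lem:hkbe} that already takes \thref{thm:belb} to \thref{thm:be}, combined with the lower bounded case \thref{thm:dp0}.
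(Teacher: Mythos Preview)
Your proof is correct and follows essentially the same route as the paper: reduce to \thref{thm:dp0} for $k$ via \thref{lem:hkbe}, transfer the argmin and recession conditions using $E_tp_t=0$, and close the value identity by showing $Eh_t(x^t)=Ek_t(x^t)$ whenever $Eh_t(x^t)<\infty$ through \thref{lem:perp}. The only cosmetic differences are that you write the correction as $x^{t-1}\cdot E_tp^{t-1}$ rather than $x^t\cdot E_tp^t$ (equivalent since $E_tp_t=0$) and that you spell out the easy inequality $\inf_{x^t\in\N^t}Eh_t(x^t)\le\inf\eqref{sp}$ via the recursion, which the paper states without proof.
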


\begin{proof}
By \thref{lem:hkbe}, \eqref{be} has a solution for $k$. Since $k$ is lower bounded and since $Eh=Ek$ on $\N$, \thref{thm:dp0} says that
\begin{equation}\label{eq:k_t}
\inf\eqref{sp}=\inf_{x^t\in\N^t} Ek_t(x^t)
\end{equation}
for all $t=0,\ldots,T$ and that an $\bar x\in\N$ solves \eqref{sp} if and only if
\begin{equation*}
\bar x_t\in\argmin_{x_t\in\reals^{n_t}}k_t(\bar x^{t-1},x_t)\quad \text{a.s.}
\end{equation*}
for all $t=0,\ldots,T$.

By definition of $(h_t)_{t=0}^T$, we always have $\inf\eqref{sp}\ge\inf_{x^t\in\N^t} Eh_t(x^t)$. By \eqref{eq:lbt}, there exist $m_t\in L^1$ such that
\[
h_t(x^t) \ge x^t\cdot E_t p^t - m_t.
\]
If $Eh_t(x^t)<\infty$, then $E[x^t\cdot(E_t p^t)]<\infty$, so Lemma~\ref{lem:perp} gives $Ek_t(x^t)=Eh_t(x^t)$. Thus, 
\[
\inf\eqref{sp}=\inf_{x^t\in\N^t} Eh_t(x^t),
\]
which proves the first claim. Since $E_tp_t=0$, we have
\[
\argmin_{x_t}k_t(x^{t-1}(\omega),x_t,\omega) = \argmin_{x_t}h_t(x^{t-1}(\omega),x_t,\omega),
\]
which proves the second claim. The last claim follows from that of \thref{thm:dp0} since $E_tp_t=0$.
\end{proof}

We combine the assumptions of \thref{thm:belb,thm:dpLb} into the following.

\begin{assumption}\thlabel{ass:dp}
Problem \eqref{sp} is feasible and there exists $p\in\N^\perp$ such that
\begin{enumerate}
\item $k(x,\omega):=h(x,\omega)-x\cdot p(\omega)$ is lower bounded,
\item $Ek(x)=Eh(x)$ for all $x\in\N$,
\item $\{x\in\N\mid k^\infty(x)\le 0 \text{ a.s.}\}$ is a linear space.
\end{enumerate}
\end{assumption}

Note that, if $h$ is lower bounded, one can take $p=0$ so \thref{ass:dp} reduces to \thref{ass:dplb}. Sufficient conditions are given in \thref{2lambda,domsc} below. Applications in Section~\ref{sec:dpapp} illustrate these conditions further. In particular, in financial mathematics, the assumption is related to the existence of a martingale measure of the price process.  

The following combines \thref{thm:belb,thm:dpLb}.

\begin{theorem}\thlabel{thm:dplbc}
Under \thref{ass:dp}, \eqref{be} has a unique solution $(h_t)_{t=0}^T$,
\[
\inf\eqref{sp}=\inf_{x^t\in\N^t} Eh_t(x^t)
\]
for all $t=0,\ldots,T$, \eqref{sp} has a solution and the solutions $\bar x\in\N$ of  \eqref{sp} are characterized by
\begin{equation}\tag{OP}\label{op}
\bar x_t\in\argmin_{x_t\in\reals^{n_t}}h_t(\bar x^{t-1},x_t)\quad \text{a.s.}\quad t=0,\ldots,T.
\end{equation}
\end{theorem}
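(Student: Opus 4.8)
The plan is to derive \thref{thm:dplbc} by simply combining the three results that have already been established, checking that \thref{ass:dp} supplies exactly the hypotheses each of them needs. First I would observe that parts 1 and 3 of \thref{ass:dp} are precisely the hypotheses of \thref{thm:be}; invoking it gives the existence and uniqueness of a solution $(h_t)_{t=0}^T$ of \eqref{be} consisting of L-bounded normal integrands, together with the fact that the recession cones $N_t(\omega)$ are linear-valued for all $t$. This disposes of the existence and uniqueness claims and sets up the linearity hypothesis needed at the end.

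Next I would apply \thref{thm:dpLb}. Its hypotheses are: the existence of $p\in\N^\perp$ with $k(x,\omega):=h(x,\omega)-x\cdot p(\omega)$ lower bounded and $Ek(x)=Eh(x)$ for all $x\in\N$ (this is exactly parts 1 and 2 of \thref{ass:dp}), feasibility of \eqref{sp} (assumed), and the existence of a solution of \eqref{be} (just established via \thref{thm:be}). So \thref{thm:dpLb} yields $\inf\eqref{sp}=\inf_{x^t\in\N^t} Eh_t(x^t)$ for all $t$ and the characterization \eqref{op} of optimal $\bar x\in\N$. Since \thref{thm:be} already told us the $N_t$ are linear-valued, the final clause of \thref{thm:dpLb} applies and produces an optimal $x\in\N$ with $x_t\perp N_t$; in particular \eqref{sp} has a solution.

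There is essentially no obstacle here beyond bookkeeping: the real work was done in \thref{thm:be} (which itself rests on \thref{lem:hkbe} and \thref{thm:belb}) and in \thref{thm:dpLb} (which rests on \thref{lem:perp} and \thref{thm:dp0}). The one point that deserves an explicit sentence is that the same $p\in\N^\perp$ is used in both invocations — \thref{ass:dp} fixes a single $p$ satisfying all of 1--3 — so the solution $(h_t)$ whose existence comes from \thref{thm:be} is the same object that \thref{thm:dpLb} characterizes, there being only one solution of \eqref{be} by the uniqueness in \thref{thm:be}. Thus the proof is just: ``Parts 1 and 3 of \thref{ass:dp} and \thref{thm:be} give existence, uniqueness and linearity of the $N_t$; parts 1 and 2, feasibility, and this existence feed \thref{thm:dpLb}, which gives the value recursion, the optimality characterization \eqref{op}, and, using linearity of the $N_t$, a solution of \eqref{sp}.''

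\begin{proof}
By parts 1 and 3 of \thref{ass:dp}, the normal integrand $k$ satisfies the hypotheses of \thref{thm:be}, so \eqref{be} has a unique solution $(h_t)_{t=0}^T$ of L-bounded normal integrands and the mappings
\[
N_t(\omega):=\{x_t\in\reals^{n_t} \mid h_t^\infty(x^t,\omega)\le 0,\ x^{t-1}=0\}
\]
are linear-valued for all $t$. By parts 1 and 2 of \thref{ass:dp}, together with feasibility of \eqref{sp} and the existence of the solution $(h_t)_{t=0}^T$ just established, \thref{thm:dpLb} applies and gives
\[
\inf\eqref{sp}=\inf_{x^t\in\N^t} Eh_t(x^t)
\]
for all $t=0,\ldots,T$, as well as the characterization \eqref{op} of the solutions $\bar x\in\N$ of \eqref{sp}. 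Finally, since the $N_t$ are linear-valued, the last claim of \thref{thm:dpLb} yields an optimal $x\in\N$ with $x_t\perp N_t$ almost surely; in particular, \eqref{sp} has a solution.
\end{proof}
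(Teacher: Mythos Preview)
Your proposal is correct and matches the paper's approach exactly: the paper presents \thref{thm:dplbc} without a separate proof, stating only that it ``combines'' the preceding results, and your write-up makes explicit precisely this combination (\thref{thm:be} from parts 1 and 3 of \thref{ass:dp}, then \thref{thm:dpLb} from parts 1 and 2 plus feasibility, with the linearity of the $N_t$ from \thref{thm:be} feeding the existence clause of \thref{thm:dpLb}).
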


The following gives sufficient conditions for \thref{ass:dp}.

\begin{lemma}\thlabel{2lambda}
\thref{ass:dp} holds if \eqref{sp} is feasible and
\begin{enumerate}
\item
  there exists a $p\in\N^\perp$ and $m\in L^1$ and $\epsilon>0$ with 
  \[
  h(x,\omega)\ge \lambda x\cdot p(\omega)-m(\omega)\quad \forall \lambda\in[1-\epsilon,1+\epsilon],
  \]
\item
  $\{x\in \N \mid h^\infty(x)\le 0 \text{ a.s.}\}$ is a linear space.
\end{enumerate}
In this case
\[
 \{x\in \N \mid h^\infty(x)\le 0 \text{ a.s.}\} = \{x\in \N \mid k^\infty(x)\le 0 \text{ a.s.}\}.
\]
The lower bound in 1 can be written equivalently as 
\[
h(x,\omega)\ge x\cdot p(\omega)+ \epsilon |x\cdot p(\omega)| -m(\omega)
\]
or as
\[
\lambda p \in \dom Eh^*\quad \forall \lambda\in[1-\epsilon,1+\epsilon].
\]
\end{lemma}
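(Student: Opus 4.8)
The plan is to verify the three conditions of \thref{ass:dp} one at a time, the first being essentially immediate and the third requiring the bulk of the work. Condition 1 of \thref{ass:dp} holds trivially: taking $\lambda=1$ in hypothesis 1 gives $h(x,\omega)\ge x\cdot p(\omega)-m(\omega)$, i.e.\ $k(x,\omega)=h(x,\omega)-x\cdot p(\omega)\ge -m(\omega)$, so $k$ is lower bounded. The two reformulations of the lower bound in 1 are routine: writing $h(x,\omega)\ge\lambda x\cdot p(\omega)-m(\omega)$ for all $\lambda\in[1-\epsilon,1+\epsilon]$ is the same as requiring it for the two extreme values $\lambda=1\pm\epsilon$ (since the right side is affine in $\lambda$), which combine to $h(x,\omega)\ge x\cdot p(\omega)+\epsilon|x\cdot p(\omega)|-m(\omega)$; and taking conjugates in $x$, the bound $h\ge\lambda p\cdot(\cdot)-m$ says exactly $h^*(\lambda p,\omega)\le m(\omega)$, i.e.\ $Eh^*(\lambda p)\le Em<\infty$, so $\lambda p\in\dom Eh^*$.

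For condition 2 of \thref{ass:dp}, I would fix $x\in\N$ and show $Ek(x)=Eh(x)$. Since $k(x)=h(x)-x\cdot p$ and $k$ is lower bounded, if $Eh(x)=+\infty$ then $E[x\cdot p]$ cannot be $-\infty$ (as $h(x)\ge x\cdot p-m$ forces $[x\cdot p]^-$ to be dominated by $h(x)^-+m\in L^1$ on the set where... — more cleanly: $[x\cdot p]^+\le h(x)^++m$ is not what we want; rather use that $[x\cdot p]^-\le [k(x)]^-+\ldots$). The clean argument: by hypothesis 1 with $\lambda=1-\epsilon$ and $\lambda=1+\epsilon$, both $h(x)-(1-\epsilon)x\cdot p$ and $h(x)-(1+\epsilon)x\cdot p$ are bounded below by $-m\in L^1$; subtracting, $2\epsilon\, x\cdot p$ is squeezed, and adding shows $h(x)-x\cdot p\ge -m$ while also $\epsilon|x\cdot p|\le h(x)-x\cdot p+m$. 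Hence if $Eh(x)<\infty$ then $E[x\cdot p]^+<\infty$, so $E[x\cdot p]$ is well-defined and, by \thref{lem:perp}, equals $0$; therefore $Ek(x)=Eh(x)$. If $Eh(x)=+\infty$, then since $Ek(x)\le Eh(x)+E[-x\cdot p]$ and one checks $E[x\cdot p]^-<\infty$ using the $\lambda=1+\epsilon$ bound (which gives $[x\cdot p]^-$ integrable whenever $h(x)^+$ is, but here we need the reverse inequality $h(x)+m\ge(1+\epsilon)x\cdot p$ giving $[x\cdot p]^+\le\frac{1}{1+\epsilon}(h(x)^++m)$, not integrable) — so instead: from $\epsilon|x\cdot p|\le h(x)-x\cdot p+m$ rearranged, $E[x\cdot p]^-<\infty$ always when $k(x)^-$ integrable, and then $Ek(x)=Eh(x)-E[x\cdot p]$ with $E[x\cdot p]\in(-\infty,+\infty]$ well-defined; Lemma~\ref{lem:perp} applies when $E[x\cdot p]^+<\infty$, and when $E[x\cdot p]^+=+\infty$ both sides of $Ek(x)=Eh(x)$ are $+\infty$. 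Either way equality holds.

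Condition 3 of \thref{ass:dp}, together with the displayed equality of recession cones, is the main obstacle. Here I would compute $k^\infty$ in terms of $h^\infty$: since $k(x,\omega)=h(x,\omega)-x\cdot p(\omega)$ and $x\mapsto x\cdot p(\omega)$ is linear, the horizon function satisfies $k^\infty(x,\omega)=h^\infty(x,\omega)-x\cdot p(\omega)$ scenariowise (linear perturbations add to the recession function). Now for $x\in\N$ with $k^\infty(x)\le 0$ a.s., I want $h^\infty(x)\le 0$ a.s.; the reverse implication is the delicate one. The key is that hypothesis 1, in recession form, gives $h^\infty(x,\omega)\ge\lambda\, x\cdot p(\omega)$ for all $\lambda\in[1-\epsilon,1+\epsilon]$ (the $m$ term drops out in the horizon function), equivalently $h^\infty(x,\omega)\ge x\cdot p(\omega)+\epsilon|x\cdot p(\omega)|$, so $k^\infty(x,\omega)=h^\infty(x,\omega)-x\cdot p(\omega)\ge\epsilon|x\cdot p(\omega)|\ge 0$. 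Hence $k^\infty(x)\le 0$ a.s.\ forces $x\cdot p=0$ a.s.\ and $h^\infty(x)=k^\infty(x)\le 0$ a.s.; conversely if $h^\infty(x)\le 0$ a.s.\ then $0\le\epsilon|x\cdot p|\le h^\infty(x)\le 0$ gives $x\cdot p=0$ a.s., whence $k^\infty(x)=h^\infty(x)\le 0$. This proves the two sets coincide, and since hypothesis 2 says $\{x\in\N\mid h^\infty(x)\le 0\}$ is linear, so is $\{x\in\N\mid k^\infty(x)\le 0\}$, establishing condition 3. The one technical point to be careful about is the scenariowise identity $k^\infty=h^\infty-p\cdot(\cdot)$ and whether it transfers to the $P$-a.s.\ statement about the integrands on $\N$; this follows from the pointwise identity since the recession function of a sum with a finite linear term is the sum of recession functions (\cite[Theorem~3.3]{roc70a} or \cite[Proposition~3.19]{rw98}).
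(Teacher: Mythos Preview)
Your overall approach matches the paper's, and your arguments for condition~1 of \thref{ass:dp}, the equivalent reformulations, and condition~2 (via \thref{lem:perp}) are correct, if a bit meandering in the case analysis for condition~2. The paper handles condition~2 more symmetrically: it simply observes that if \emph{either} $Eh(x)<\infty$ or $Ek(x)<\infty$, then one of the two inequalities
\[
h(x)\ge x\cdot p - m,\qquad h(x)-x\cdot p\ge \epsilon\, x\cdot p - m
\]
forces $[x\cdot p]^+\in L^1$, so \thref{lem:perp} gives $E[x\cdot p]=0$; if neither is finite, both equal $+\infty$.

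There is, however, a genuine gap in your argument for the equality of recession sets. Your forward inclusion is fine: from $k^\infty(x,\omega)=h^\infty(x,\omega)-x\cdot p(\omega)\ge\epsilon|x\cdot p(\omega)|$ you correctly deduce that $k^\infty(x)\le 0$ a.s.\ forces $x\cdot p=0$ a.s.\ pointwise. But for the reverse inclusion you write ``$0\le\epsilon|x\cdot p|\le h^\infty(x)\le 0$'', and the middle inequality is false: the recession bound is $h^\infty(x,\omega)\ge x\cdot p(\omega)+\epsilon|x\cdot p(\omega)|$, not $h^\infty(x,\omega)\ge\epsilon|x\cdot p(\omega)|$. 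When $x\cdot p(\omega)<0$ this only says $h^\infty(x,\omega)\ge(1-\epsilon)x\cdot p(\omega)$, which is negative, so $h^\infty(x)\le 0$ a.s.\ yields merely $x\cdot p\le 0$ a.s., not $x\cdot p=0$. The pointwise argument cannot close this gap; you must use that $x\in\N$ and $p\in\N^\perp$. The paper does exactly this: from $x\cdot p\le 0$ a.s.\ one has $[x\cdot p]^+=0\in L^1$, so \thref{lem:perp} gives $E[x\cdot p]=0$, and combined with $x\cdot p\le 0$ a.s.\ this forces $x\cdot p=0$ a.s. Then $k^\infty(x)=h^\infty(x)\le 0$ a.s., completing the reverse inclusion.
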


\begin{proof}
We have
\begin{align*}
h(x,\omega)&\ge x\cdot p(\omega)-m(\omega),\\
h(x,\omega) - x\cdot p(\omega) &\ge \epsilon x\cdot p(\omega)-m(\omega).
\end{align*}
Let $x\in\N$. If either $Eh(x)<\infty$ or  $E[h(x)-x\cdot p]<\infty$, the above inequalities and Lemma~\ref{lem:perp}  give $E[x\cdot p]=0$, so 
\[
Eh(x) = E[h(x)-x\cdot p].
\]
The above inequalities also give
\begin{align*}
h^\infty(x,\omega)&\ge x\cdot p(\omega),\\
h^\infty(x,\omega) - x\cdot p(\omega) &\ge \epsilon x\cdot p(\omega).
\end{align*}
If either $h^\infty(x,0)\le 0$ or  $h^\infty(x,0)-x\cdot p \le 0$ almost surely, then $x\cdot p\le 0$ almost surely. Lemma~\ref{lem:perp} then implies $x\cdot p=0$ almost surely, so
\[
\{x\in \N \mid h^\infty(x)\le 0 \text{ a.s.}\} = \{x\in \N \mid h^\infty(x)- x\cdot p\le 0 \text{ a.s.}\}.
\]
The given linearity condition thus implies that in \thref{ass:dp}.
\end{proof}


\begin{lemma}\thlabel{domsc}
\thref{ass:dp} holds if \eqref{sp} is feasible and there exists $p\in\N^\perp$ such that
\begin{enumerate}
\item
  $k(x,\omega):=h(x,\omega)-x\cdot p(\omega)$ is lower bounded,
\item $\{x\in\N\mid x\in\dom h\ a.s.\}\subset\dom Eh$,
\item $\{x\in\N\mid k^\infty(x)\le 0\ a.s.\}$ is a linear space.
\end{enumerate}
\end{lemma}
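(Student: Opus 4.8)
The plan is to verify the three conditions of \thref{ass:dp} with the same $p\in\N^\perp$. Two of them are immediate: \eqref{sp} is feasible by hypothesis, condition~1 of \thref{ass:dp} is condition~1 of the lemma, and condition~3 of \thref{ass:dp} is condition~3 of the lemma. Hence the whole content is condition~2 of \thref{ass:dp}, namely the identity $Ek(x)=Eh(x)$ for every $x\in\N$, where $k(x,\omega)=h(x,\omega)-x\cdot p(\omega)$.

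To prove this I would fix $x\in\N$. Since $k$ is lower bounded, there is $m\in L^1$ with $k(y,\omega)\ge m(\omega)$ for all $y\in\reals^n$ a.s., so in particular $k(x(\omega),\omega)>-\infty$ and $h(x(\omega),\omega)=k(x(\omega),\omega)+x(\omega)\cdot p(\omega)>-\infty$ for a.e.\ $\omega$; moreover $p\in L^1$ makes $x\cdot p$ finite a.s. If $x\notin\dom h$ on a set of positive probability, then $h(x)=+\infty$ and hence $k(x)=h(x)-x\cdot p=+\infty$ on that set, so both $Eh(x)$ and $Ek(x)$ equal $+\infty$ by the convention that an integral is $+\infty$ unless its positive part is integrable, and the identity holds trivially.

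The remaining case is $x\in\dom h$ almost surely. Then condition~2 of the lemma gives $x\in\dom Eh$, i.e.\ $h(x)^+\in L^1$, while $k(x)\ge m$ gives $k(x)^-\in L^1$. Here $h(x)$ and $k(x)$ are finite a.s., so the pointwise identity $x\cdot p=h(x)-k(x)$ yields $[x\cdot p]^+\le h(x)^++k(x)^-\in L^1$. This is exactly the hypothesis under which \thref{lem:perp}, applied with $v=p$, gives $E[x\cdot p]=0$; in particular $x\cdot p\in L^1$. Consequently $h(x)\ge m+x\cdot p$ with the right-hand side integrable, so together with $h(x)^+\in L^1$ we obtain $h(x)\in L^1$, whence $k(x)=h(x)-x\cdot p\in L^1$ and $Ek(x)=Eh(x)-E[x\cdot p]=Eh(x)$.

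The only step that genuinely uses the structural hypotheses is the production of $[x\cdot p]^+\in L^1$ enabling the application of \thref{lem:perp}: it combines the lower bound on $k$ (for $k(x)^-\in L^1$) with the domain condition (for $h(x)^+\in L^1$). I expect this to be the main, and essentially the only, point requiring care; everything else is routine bookkeeping with the $+\infty$ convention. Once \thref{ass:dp} holds, the conclusions of \thref{thm:dplbc} follow.
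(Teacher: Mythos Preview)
Your proof is correct and follows essentially the same approach as the paper's: both reduce to showing $E[x\cdot p]=0$ via \thref{lem:perp}, using the lower bound on $k$ together with the domain hypothesis to control $[x\cdot p]^+$. Your case split on whether $x\in\dom h$ a.s.\ is a slightly more explicit way of handling the trivial $+\infty=+\infty$ case than the paper's phrasing (which starts from ``either $Ek(x)$ or $Eh(x)$ is finite''), and your integrability bookkeeping is more detailed, but the argument is the same.
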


\begin{proof}
Let $x\in\N$ be such that either $Ek(x)$ or $Eh(x)$ is finite. Then $x\in\dom h$ almost surely, so $x\in\dom Eh$ by 2. By 1, there exists $m\in L^1$ such that $h(x) =k(x)+x\cdot p \ge x\cdot p -m$, so $E[x\cdot p]=0$ by \thref{lem:perp}. Thus $Ek(x)=Eh(x)$.
\end{proof}

\begin{cexample}\label{ex:nops}
Without \thref{ass:dp}, it is possible that \eqref{sp} does not have a solution albeit \eqref{be} has a unique solution and there is a unique $x$ satisfying \eqref{op}.
 
Indeed, let $n_t=1$, $\alpha\in L^2(\F_0)$ and $p\in\N^\perp$ be such that $E [\alpha p_0]=\infty$ and consider
\[
h(x,\omega):= \frac{1}{2}|x_0-\alpha(\omega)|^2 +x_0 p_0(\omega).
\]
Here $h_0(x,\omega)=  \frac{1}{2}|x_0-\alpha(\omega)|^2$, so $x\in \N$ satisfies \eqref{op} if and only if $x_0=\alpha$. For such $x$, $Eh(x)=\infty$.
\end{cexample}

\begin{cexample}\label{ex:nops2}
It is possible that \eqref{be} has a solution, 
\[
\L:=\{x\in\N\mid h^\infty (x)\le 0\}
\]
is a linear space but $N_t$ in \thref{thm:dpLb} is not linear-valued.

Indeed, let $T=0$, $n_0=2$, $\F_0$ trivial, $p\in\N^\perp$ with $p$ nonzero almost surely and consider
\[
h(x,\omega):= (x^2_0-x^1_0)^+ +\delta_{\reals_+}(x^2_0) +x^1_0 p^1_0(\omega).
\]
Here $h_0(x,\omega)=  (x^2_0-x^1_0)^++\delta_{\reals_+}(x^2_0)$, so $N_0=\{x_0\mid 0\le x^2_0\le x^1_0\}$ while $\L=\{x \mid 0\le x^2_0\le x^1_0, x^1_0\cdot p^1_0\le 0 \text{ a.s.}\}=\{0\}$.
\end{cexample}

\section{Applications}\label{sec:dpapp}

This section applies \thref{thm:dp0,thm:dpLb} to some well-known instances of \eqref{sp}. Many of the existence results and optimality conditions below have previously been known only under more restrictive compactness and boundedness conditions. In the case of portfolio optimization in Section~\ref{sec:fm2} below, we extend earlier results by allowing for portfolio constraints.

\subsection{Mathematical programming}\label{sec:mp2}

Consider the problem
\begin{equation}\label{mp}\tag{$MP$}
\begin{aligned}
&\minimize\quad & Ef_0(x)&\quad\ovr\ x\in\N,\\
  &\st\quad & f_j(x) &\le 0\quad j=1,\ldots,l\ a.s.,\\
   & & f_j(x) &= 0\quad j=l+1,\ldots,m\ a.s.,
\end{aligned}
\end{equation}
where $f_j$, $j=0,\ldots,m$ are convex normal integrands with $f_j$ affine for $j>l$. The problem fits the general framework with
 \[
h(x,\omega):= \begin{cases}
f_0(x,\omega)\quad &\text{if } f_j(x,\omega)\le 0, j=1\dots,l, f_j(x,\omega)=0, j=l+1,\dots,m\\
+\infty\quad&\text{otherwise}.
\end{cases}
\]
Indeed, by \cite[Example~1.32 and Proposition~14.33]{rw98}, $h$ is a normal integrand. Problem \eqref{mp} is essentially from \cite{rw78} where it was analyzed through convex duality. We have extended the formulation by the inclusion of affine equality constraints.

\begin{assumption}\thlabel{ass:mp2}
\mbox{}
\begin{enumerate}
\item \eqref{mp} is feasible,
\item $\{x\in\N\mid f^\infty_j(x)\le 0\ j=0,\dots l, f^\infty_j(x)=0\ j=l+1,\dots,m\}$ is a linear space,
\item  there exists a $p\in\N^\perp$, $m\in L^1$ and an $\epsilon>0$ such that
\[
f_0(x)\ge x\cdot p + \epsilon|x\cdot p| - m \quad \text{a.s.}
\]
for all $x\in\reals^n$ with $f_j(x)\le 0$ for all $j=1,\dots, l$ and $f_j(x)=0$ for all $j=l+1,\dots, m$  almost surely.
\end{enumerate}
\end{assumption}
\thref{ass:mp2} relaxes the conditions imposed in \cite{rw76}. The last two conditions are sufficient for the conditions of \thref{2lambda} which imply \thref{ass:dp}. The last condition in \thref{ass:mp2} holds, in particular, if $f_0$ is bounded from below by an integrable function as one can then take $p=0$. A direct application of  \thref{thm:dplbc} gives the following.

\begin{theorem}\thlabel{thm:mp4}
Under \thref{ass:mp2}, $\eqref{mp}$ has a solution.
\end{theorem}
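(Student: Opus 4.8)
The plan is to deduce \thref{ass:dp} from \thref{ass:mp2} by means of \thref{2lambda}, and then to invoke \thref{thm:dplbc}, which — applied to \eqref{mp} regarded as an instance of \eqref{sp} with the normal integrand $h$ defined above — immediately produces a minimizer. Item~1 of \thref{ass:mp2} is precisely the feasibility hypothesis of \thref{2lambda}, so the work is to check conditions~1 and~2 of \thref{2lambda} with the $p\in\N^\perp$ furnished by item~3 of \thref{ass:mp2}.

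First I would verify condition~1 of \thref{2lambda}. Fix $\omega$ outside a fixed null set and $x\in\reals^n$, and let
\[
C(\omega):=\{x\in\reals^n\mid f_j(x,\omega)\le 0,\ j=1,\dots,l,\ f_j(x,\omega)=0,\ j=l+1,\dots,m\}
\]
be the scenariowise feasible set. If $x\notin C(\omega)$, then $h(x,\omega)=+\infty$ and the desired bound is trivial; if $x\in C(\omega)$, then $h(x,\omega)=f_0(x,\omega)$ and item~3 of \thref{ass:mp2} gives $h(x,\omega)\ge x\cdot p(\omega)+\epsilon|x\cdot p(\omega)|-m(\omega)$. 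By the equivalent reformulation recorded in \thref{2lambda}, this is exactly the inequality $h(x,\omega)\ge\lambda x\cdot p(\omega)-m(\omega)$ for every $\lambda\in[1-\epsilon,1+\epsilon]$, so condition~1 holds.

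Next I would verify condition~2 of \thref{2lambda} by identifying $\{x\in\N\mid h^\infty(x)\le 0\ \text{a.s.}\}$ with the set in item~2 of \thref{ass:mp2}. Feasibility of \eqref{mp} ensures that $\dom h(\cdot,\omega)=\dom f_0(\cdot,\omega)\cap C(\omega)$ is nonempty for a.e.\ $\omega$, hence each $\{f_j(\cdot,\omega)\le 0\}$ and each $\{f_j(\cdot,\omega)=0\}$ is nonempty. Writing $h(\cdot,\omega)=f_0(\cdot,\omega)+\delta_{C(\omega)}$ and using that for proper lsc convex functions the recession function is the limit of the nondecreasing difference quotients — so recession functions add over sums of such functions with intersecting domains — one gets $h^\infty(\cdot,\omega)=f_0^\infty(\cdot,\omega)+\delta_{C(\omega)^\infty}$, with
\[
C(\omega)^\infty=\{x\in\reals^n\mid f_j^\infty(x,\omega)\le 0,\ j=1,\dots,l,\ f_j^\infty(x,\omega)=0,\ j=l+1,\dots,m\},
\]
since the recession cone of a nonempty intersection of closed convex sets is the intersection of the recession cones, that of a nonempty sublevel set $\{f_j(\cdot,\omega)\le 0\}$ is $\{f_j^\infty(\cdot,\omega)\le 0\}$, and that of an affine set $\{f_j(\cdot,\omega)=0\}$ with $f_j$ affine is $\{f_j^\infty(\cdot,\omega)=0\}$. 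Thus $h^\infty(x,\omega)\le 0$ holds iff $f_0^\infty(x,\omega)\le 0$ and $x\in C(\omega)^\infty$, which is the scenariowise description of the set in item~2 of \thref{ass:mp2}; hence the two sets of adapted strategies coincide and are linear by assumption.

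Having checked conditions~1 and~2, \thref{2lambda} yields \thref{ass:dp}, and \thref{thm:dplbc} then gives a solution of \eqref{mp}. The step I expect to be the main obstacle is the scenariowise recession analysis: one must justify both the additivity $h^\infty=f_0^\infty+\delta_{C^\infty}$ and the identification of $C^\infty$ uniformly for a.e.\ $\omega$ and compatibly with the measurability of the recession integrands, which is where the affineness of the $f_j$ for $j>l$ and the nonemptiness coming from feasibility are used.
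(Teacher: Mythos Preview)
Your proposal is correct and follows precisely the approach the paper takes: the paper states that the last two conditions of \thref{ass:mp2} imply the conditions of \thref{2lambda}, and then says the theorem is ``a direct application of \thref{thm:dplbc}''. You have simply filled in the details the paper leaves implicit, namely the identification of $h^\infty$ via the scenariowise recession analysis and the verification that the lower bound in item~3 of \thref{ass:mp2} matches condition~1 of \thref{2lambda}; these details are routine and your treatment of them is sound.
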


\begin{example}[Linear programming]\thlabel{ex:lp}
In the special of Example~\ref{ex:lp}, \thref{ass:mp2} means that
\begin{enumerate}
\item \eqref{mp} is feasible,
\item $\{x\in\N\mid c\cdot x\le 0, Ax\in K\}$ is a linear space,
\item  there exists a $p\in\N^\perp$, $m\in L^1$ and an $\epsilon>0$ such that
\[
c\cdot x\ge x\cdot p + \epsilon|x\cdot p| - m \quad \text{a.s.}
\]
for all $x\in\reals^n$ with $Ax-b\in K$  almost surely.
\end{enumerate}
\end{example}

The above are merely examples how the results of Section~\ref{sec:Lbexist} can be used. In some applications, the conditions of \thref{domsc} could be more convenient. The above only gives the existence of solutions. When the constraint matrix in \thref{ex:lp} has a block-diagonal form, one can write the dynamic programming recursion in a more familiar form; see \thref{bdslp} below.

\subsection{Optimal stopping}\label{ssec:os2}

Let $R$ be a real-valued adapted stochastic process and consider the {\em optimal stopping problem}
\begin{equation}\label{os}\tag{$OS$}
  \maximize\quad ER_\tau\quad\ovr \tau\in\T,
\end{equation}
where $\T$ is the set of {\em stopping times}, i.e.\ measurable functions $\tau:\Omega\to\{0,\ldots,T+1\}$ such that $\{\omega\in\Omega\mid \tau(\omega)\le t\}\in\F_t$ for each $t=0,\ldots,T$. Choosing $\tau=T+1$ is interpreted as not stopping at all. Accordingly, we define $R_{T+1}:=0$. Consider also the problem
\begin{equation}\label{ros}\tag{$ROS$}
\maximize_{x\in\N_+}\quad E \sum_{t=0}^TR_t x_t\quad\st\quad x\ge 0,\ \sum_{t=0}^Tx_t\le 1\ a.s.
\end{equation}
This is a convex relaxation of \eqref{os} which is obtained from \eqref{ros} by adding the constraint that $x_t\in\{0,1\}$. Indeed, the feasible strategies $x$ are then in one-to-one correspondence with stopping times via 
\[
x_t=\begin{cases}
1 &\text{if $t=\tau$},\\
0 &\text{if $t\ne\tau$}.
\end{cases}
\]
The following motivates the relaxation.

\begin{lemma}
If $R\in L^1$, then $\sup\eqref{os}=\sup\eqref{ros}$ and $\argmax\eqref{ros}$ is the closed convex hull of strategies $x$ that correspond to optimal solutions of \eqref{os}.
\end{lemma}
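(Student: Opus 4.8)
The plan is to squeeze both optimal values between $EV_0$, where $V$ is the Snell envelope of $R$, and then to obtain the $\argmax$ statement from Krein--Milman after showing that the feasible set of \eqref{ros} is weak-$*$ compact with extreme points exactly the indicator strategies coming from stopping times. (Alternatively, \eqref{ros} with the objective negated is an instance of \eqref{sp} with the lower bounded integrand $h(x,\omega)=-\sum_t R_t(\omega)x_t+\delta_{C_0}(x)$, $C_0:=\{x\ge0:\sum_t x_t\le 1\}$, for which $\L=\{0\}$, so \thref{thm:belb,thm:dp0} apply directly; but the route below is more self-contained.) First, for $\tau\in\T$ put $x^\tau_t:=\one_{\{\tau=t\}}$ for $t=0,\dots,T$. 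Then $\{x^\tau_t=1\}=\{\tau\le t\}\setminus\{\tau\le t-1\}\in\F_t$, $x^\tau\ge0$, $\sum_{t=0}^T x^\tau_t=\one_{\{\tau\le T\}}\le1$, and $E\sum_{t=0}^T R_t x^\tau_t=E[R_\tau\one_{\{\tau\le T\}}]=ER_\tau$ because $R_{T+1}=0$; thus $x^\tau$ is feasible for \eqref{ros} and $\sup\eqref{os}\le\sup\eqref{ros}$.

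Next, set $V_{T+1}:=0$ and $V_t:=R_t\vee E_t V_{t+1}$ for $t=T,\dots,0$; a backward induction gives $|V_t|\le E_t[\sum_{s=t}^T|R_s|]\in L^1$, so $V$ is an integrable supermartingale with $V_T=R_T\vee0\ge0$. For any feasible $x$ of \eqref{ros} let $c_t:=1-\sum_{s<t}x_s$, so $0\le c_t\le1$, $c_0=1$, $c_{T+1}\ge0$, $c_t\in L^0(\F_{t-1})$ for $t\ge1$, and $x_t=c_t-c_{t+1}\ge0$. Using $R_t\le V_t$ and $x_t\ge0$ and then summation by parts (every term is integrable since $V_t\in L^1$ and $c_t$ is bounded),
\[
E\sum_{t=0}^T R_t x_t\ \le\ E\sum_{t=0}^T V_t x_t\ =\ EV_0+\sum_{t=1}^T E\!\left[c_t\big(E_{t-1}V_t-V_{t-1}\big)\right]-E[V_T c_{T+1}],
\]
where the middle sum is $\le0$ (supermartingale property and $c_t\ge0$) and $E[V_T c_{T+1}]\ge0$ ($V_T\ge0$, $c_{T+1}\ge0$); hence $\sup\eqref{ros}\le EV_0$. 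Taking $x=x^{\tau^*}$ with $\tau^*:=\inf\{t\le T:R_t=V_t\}$ (and $\inf\emptyset:=T+1$) turns all three inequalities into equalities, since by definition of $\tau^*$ one has $R_t=V_t$ on $\{x^{\tau^*}_t>0\}$, $E_{t-1}V_t=V_{t-1}$ on $\{c_t>0\}=\{\tau^*\ge t\}$, and $V_T=0$ on $\{c_{T+1}>0\}=\{\tau^*=T+1\}$; therefore $ER_{\tau^*}=E\sum_t R_t x^{\tau^*}_t=EV_0$. Combining with the previous paragraph, $\sup\eqref{os}=\sup\eqref{ros}=EV_0$, and comparing objectives shows that $\tau$ is optimal for \eqref{os} if and only if $x^\tau$ is optimal for \eqref{ros}.

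For the $\argmax$, let $C$ be the feasible set of \eqref{ros}. It is convex, bounded in $L^\infty$, and weak-$*$ closed in $(L^\infty(\F))^{T+1}=((L^1(\F))^{T+1})^*$ — the constraint $x_t\in L^\infty(\F_t)$ being the annihilator of $\{g\in L^1:E_t g=0\}$ — hence weak-$*$ compact, while $x\mapsto E\sum_t R_t x_t$ is weak-$*$ continuous since $R\in L^1$. Thus $\argmax\eqref{ros}$ is a nonempty weak-$*$ compact convex \emph{face} of $C$, so its extreme points lie in $\ext C$, and by Krein--Milman it is the closed convex hull of those. A perturbation argument gives $\ext C=\{x^\tau:\tau\in\T\}$: if $P(0<x_t<1)>0$ for some $t$ then $x$ is a proper average of two feasible strategies, whereas if every $x_t\in\{0,1\}$ a.s.\ then $\tau:=\sum_t t\,x_t+(T+1)(1-\sum_t x_t)$ is a stopping time with $x=x^\tau$. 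By the previous paragraph the elements of $\ext C$ lying in $\argmax\eqref{ros}$ are exactly the $x^\tau$ with $\tau$ optimal for \eqref{os}; since $\argmax\eqref{ros}$ is convex and closed and contains each of these, it contains their closed convex hull, and Krein--Milman yields the reverse inclusion.

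The main obstacle is the extreme-point identification $\ext C=\{x^\tau:\tau\in\T\}$: exhibiting the perturbation of a fractional component $x_t$ requires care, because the perturbation direction must keep all components adapted and preserve $\sum_t x_t\le1$, and the slack event $\{\sum_t x_t<1\}$ need not be $\F_t$-measurable; in general one perturbs $x_t$ against some $x_{t'}>0$ on an $\F_{t\wedge t'}$-measurable set on which both are bounded away from $0$ and $1$. Everything else is routine bookkeeping with the supermartingale $V$. (If preferred, the complementarity relations $\{x_t>0\}\subseteq\{R_t=V_t\}$, $c_t(V_{t-1}-E_{t-1}V_t)=0$ and $c_{T+1}V_T=0$, read off from the equality case above, characterise $\argmax\eqref{ros}$ directly, and checking that they force $x$ to be a mixture of optimal stopping strategies is essentially the same combinatorial step.)
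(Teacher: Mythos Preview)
Your proof is correct. For the equality of the suprema you take a genuinely different route from the paper: you squeeze both $\sup\eqref{os}$ and $\sup\eqref{ros}$ against $EV_0$ via the Snell envelope and a summation-by-parts supermartingale estimate, whereas the paper argues abstractly that the feasible set of \eqref{ros} is the weak-$*$ closed convex hull of the stopping-time strategies (Krein--Milman) and then uses weak-$*$ continuity of the linear objective to conclude that the supremum over the hull equals the supremum over the extreme points. Your approach is more constructive---it produces the value $EV_0$ and the optimal $\tau^*$---but it essentially front-loads the Snell-envelope analysis that the paper carries out in the theorem immediately \emph{after} this lemma; the paper's proof of the lemma itself is shorter precisely because it defers that work. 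For the $\argmax$ statement both you and the paper run the same argument: weak-$*$ compactness of the feasible set, identification of its extreme points with stopping-time strategies (the paper cites an external lemma, you sketch the perturbation and flag the adaptedness subtlety), and Krein--Milman applied to the face $\argmax\eqref{ros}$. Your observation that $\argmax$ of a linear functional is a face, hence $\ext(\argmax)\subseteq\ext C$, is exactly what the paper means by ``extreme points of the weakly compact $\argmax\eqref{ros}$ are extreme points of the feasible set''.
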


\begin{proof}
As in the proof of \cite[Lemma~2]{pp220b}, it can be shown that the processes corresponding to stopping times are the extreme points of the feasible set of \eqref{ros}.   By Banach-Alaoglu theorem, the feasible set of \eqref{ros} is compact in the weak topology that $L^\infty$ has as the dual of $L^1$. Thus, by Krein--Milman theorem, the feasible set of \eqref{ros} is the closed convex hull of those $x$ corresponding to stopping times.  

If $R\in L^1$, the objective is weakly continuous so the relaxation does not affect the optimum value. It is easy to verify, by contradiction, that extreme points of the weakly compact $\argmax\eqref{ros}$ are extreme points of the feasible set, which, by the Krein-Milman theorem again, proves the last claim. 
\end{proof}
Problem \eqref{ros} fits the general framework with $n_t=1$ for all $t$ and
\[
h(x,\omega) = 
\begin{cases}
-\sum_{t=0}^TR_t(\omega) x_t & \text{if $x\ge 0$ and $\sum_{t=0}^Tx_t\le 1$},\\
+\infty & \text{otherwise},
\end{cases}
\]
for an adapted real-valued process $R$ and $x_{-1}:=0$.

Let $S$ be the {\em Snell envelope} of $R$, i.e.\ the adapted stochastic process given by
\begin{align*}
S_{T+1}&:=0\\
S_t&:=\max\{R_t,E_t S_{t+1}\}.
\end{align*}
The Snell envelope is the smallest supermartingale that dominates the positive part $R^+$ of the reward process $R$. Indeed, let $\tilde S$ be another supermartingale that dominates $R^+$. Then $\tilde S_T\ge S_T$ and
\[
\tilde S_t\ge\max\{R_t,E_t\tilde S_{t+1}\}
\]
so $\tilde S_t\ge S_t$ for all $t$, by induction.

\begin{theorem}
Assume that $R\in L^1$. The optimum value of \eqref{ros} coincides for all $t=0,\ldots,T$ with that of
\begin{equation*}
\begin{aligned}
  &\maximize_{x^t\in\N^t}\quad & & E\left[\sum_{s=0}^t R_s x_s + E_t[S_{t+1}](1-\sum_{s=1}^t x_s)\right]\\
  &\st\quad & & x^t\ge 0,\ \sum_{s=0}^t x_s\le 1\quad \text{a.s.}
\end{aligned}
\end{equation*}
In particular, the optimum value of \eqref{ros} is $ES_0$. An $x\in\N$ is optimal if and only if
\[
x_t\in\argmax_{x_t\in\reals}\left\{(R_t-E_t[S_{t+1}])x_t\midb x_t\in[0,1-\sum_{s=0}^{t-1}x_s]\right\}\quad \text{a.s.}\quad t=0,\ldots,T.
\]
In particular, optimal values of \eqref{os} and \eqref{ros} coincide, \eqref{os} admits optimal solutions $\tau\in\T$ and they are characterized by  $R_\tau=S_\tau$ almost surely.
\end{theorem}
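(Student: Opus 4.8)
The plan is to recognize this as a direct application of \thref{thm:dplbc} (equivalently \thref{thm:dpLb} together with \thref{thm:belb}) to the normal integrand $h$ associated with \eqref{ros}, after verifying that \thref{ass:dp} holds, and then to identify the solution $(h_t)_{t=0}^T$ of the Bellman equations \eqref{be} explicitly in terms of the Snell envelope $S$. First I would check \thref{ass:dp}: feasibility of \eqref{ros} is immediate ($x=0$ works); since $R\in L^1$, the integrand $h$ is already L-bounded and in fact $Eh$ is finite on the whole (bounded) feasible set, so one may take $p=0$ in \thref{ass:dp}, whence conditions 1 and 2 are trivial; for condition 3 one computes $h^\infty(x,\omega)=-\sum_t R_t(\omega)x_t+\delta_C(x)$ with $C=\{x\ge0,\ \sum_t x_t\le 1\}$ a bounded cone, so $\{x\in\N\mid h^\infty(x)\le 0\text{ a.s.}\}=\{0\}$, a linear space. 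Hence \thref{thm:dplbc} applies.

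Next I would solve \eqref{be} by backward induction, claiming that the solution is
\[
h_t(x^t,\omega)=-\sum_{s=0}^t R_s(\omega)x_s-E_t[S_{t+1}](\omega)\Bigl(1-\sum_{s=0}^t x_s\Bigr)+\delta_{C_t}(x^t,\omega),
\]
where $C_t=\{x^t\ge 0,\ \sum_{s=0}^t x_s\le 1\}$, so that $Eh_t(x^t)$ is exactly the objective of the displayed intermediate problem. For $t=T$: $h_T=E_T h$; since $R\in L^1$ and the integrand is $\F_T$-measurable up to the $R$-coefficients, $E_T h = -\sum_{s=0}^T R_s x_s+\delta_{C_T}$, which matches the formula since $E_T[S_{T+1}]=0$. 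For the inductive step I would compute $\tilde h_t=\inf_{x_{t+1}}h_{t+1}$: minimizing the linear-in-$x_{t+1}$ expression over $x_{t+1}\in[0,1-\sum_{s\le t}x_s]$ gives the coefficient $\max\{R_{t+1},E_{t+1}[S_{t+2}]\}=S_{t+1}$ on the leftover budget, i.e.
\[
\tilde h_t(x^t,\omega)=-\sum_{s=0}^t R_s(\omega)x_s - S_{t+1}(\omega)\Bigl(1-\sum_{s=0}^t x_s\Bigr)+\delta_{C_t}(x^t,\omega);
\]
then applying $E_t$ and using $E_t S_{t+1}$ (and that $\delta_{C_t}$ and the $R_s x_s$, $s\le t$, are $\F_t$-measurable, together with \thref{thm:condoperations}) yields $h_t$ in the claimed form. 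Plugging $t=0$ gives $\inf_{x^0}Eh_0(x^0)=E[\max\{R_0,E_0 S_1\}\wedge\ \text{(budget)}]$; since the budget at $t=0$ is $1$ and $S_0=\max\{R_0,E_0S_1\}\ge 0$... more precisely $\inf_{x_0\in[0,1]}Eh_0 = -E S_0$, so $\sup\eqref{ros}=ES_0$. The optimality condition \eqref{oc0} specializes to $x_t\in\argmax\{(R_t-E_t[S_{t+1}])x_t\mid x_t\in[0,1-\sum_{s<t}x_s]\}$, which is exactly the stated characterization.

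Finally, for the last sentence I would invoke the preceding lemma ($\sup\eqref{os}=\sup\eqref{ros}=ES_0$, with $\argmax\eqref{ros}$ the closed convex hull of the stopping-time strategies). Existence of an optimal $x\in\N$ comes from \thref{thm:dplbc}; to get an optimal \emph{stopping time} I would exhibit one directly: define $\tau:=\inf\{t\mid R_t=S_t\}$ (with $\tau=T+1$ if no such $t$, consistent with $R_{T+1}=S_{T+1}=0$), which is a stopping time since $S$ is adapted, and verify via the optimality condition that the $0$--$1$ strategy $x_t=\one_{\{\tau=t\}}$ satisfies \eqref{oc0}: when $t<\tau$ we have $R_t<S_t=E_t[S_{t+1}]$ so the argmax is $x_t=0$; at $t=\tau$ we have $R_\tau=S_\tau\ge E_\tau[S_{\tau+1}]$ so $x_\tau=1$ is optimal; afterwards the budget is exhausted. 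Conversely, if $x$ corresponds to a stopping time $\sigma$ and is optimal, the optimality condition forces $R_\sigma\ge E_\sigma[S_{\sigma+1}]$ and $R_t\le E_t[S_{t+1}]$ for $t<\sigma$ on $\{\sigma>t\}$, which together with $S_t\ge R_t$ and $S_t\ge E_tS_{t+1}$ gives $R_\sigma=S_\sigma$ a.s. The one genuinely delicate point is the careful bookkeeping in the backward induction — tracking the $\F_t$-measurability of each term so that \thref{thm:condoperations} applies to pull $E_t$ through the indicator $\delta_{C_t}$ and the already-realized rewards, and confirming the infimum over $x_{t+1}$ is attained so that $\tilde h_t$ is a genuine normal integrand; the rest is routine.
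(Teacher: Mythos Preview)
Your proposal is correct and follows essentially the same route as the paper: verify the hypotheses of the abstract dynamic programming theorem (the paper uses \thref{thm:dp0} and \thref{thm:belb} directly since $h$ is lower bounded, but your use of \thref{thm:dplbc} with $p=0$ amounts to the same thing), then identify $(h_t)_{t=0}^T$ by backward induction in terms of the Snell envelope, exactly as the paper does. One small slip: your formula $h^\infty(x,\omega)=-\sum_t R_t(\omega)x_t+\delta_C(x)$ is not right, and $C$ is not a cone; since $\dom h$ is bounded one has simply $h^\infty=\delta_{\{0\}}$, which is what the paper writes and which immediately gives the linearity condition you need. For the final claim on optimal stopping times, the paper argues by a short forward induction that the argmax at each step always contains an endpoint, so one can build an optimal $\{0,1\}$-valued strategy; your alternative of exhibiting $\tau=\inf\{t\mid R_t=S_t\}$ and checking \eqref{oc0} directly is equally valid and a bit more explicit, and your derivation of the necessary condition $R_\sigma=S_\sigma$ for optimal $\sigma$ from \eqref{oc0} fills in a detail the paper leaves implicit.
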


\begin{proof}

The domain of $h$ is contained in the unit simplex almost surely, so $h^\infty=\delta_{\{0\}}$ and, since $R\in L^1$, $h$ has an integrable lower bound. Thus, by \thref{thm:be}, BE has a solution $(h_t)_{t=0}^T$.  To prove the claims concerning \eqref{ros}, it suffices, by \thref{thm:dp0}, to show that
\[
h_t(x^t,\omega)=\sum_{s=0}^t\left[-R_s(\omega)x_s+\delta_{\reals_+}(x_s)\right]-E_t[S_{t+1}](\omega)(1-\sum_{s=0}^t x_s)+\delta_{\reals_+}(1-\sum_{s=0}^t x_s).
\]
Since $R$ is adapted, $h$ is $\F_T$-measurable so $h_T=h$ and the claim holds for $t=T$ since $S_{T+1}=0$. Assume that the claim holds for $t$. We get 
\[
\begin{split}
&\tilde h_{t-1}(x_{t-1},\omega)\\
 &:= \inf_{x_t\in\reals} h_t(x^t,\omega)\\
&= \inf_{x_t\in\reals}\left\{\sum_{s=0}^t\left[-R_s(\omega)x_s+\delta_{\reals_+}(x_s)\right] - E_t[S_{t+1}](\omega)(1-\sum_{s=0}^t x_s)\midb 0\le x_t\le 1-\sum_{s=0}^{t-1}x_s\right\}\\
&= \sum_{s=0}^{t-1}\left[-R_s(\omega)x_s+\delta_{\reals_+}(x_s)\right] - \sup_{x_t\in\reals}\left\{R_t(\omega)x_t+E_t[S_{t+1}](\omega)(1-\sum_{s=0}^t x_s)\midb 0\le x_t\le 1-\sum_{s=0}^{t-1}x_s\right\}\\
&= \sum_{s=0}^{t-1}[-R_s(\omega)x_s+\delta_{\reals_+}(x_s)] - \max\{R_t(\omega),E_t[S_{t+1}](\omega)\}(1-\sum_{s=0}^{t-1} x_s) + \delta_{\reals_+}(1-\sum_{s=0}^{t-1} x_s)\\
&= \sum_{s=0}^{t-1}[-R_s(\omega)x_s+\delta_{\reals_+}(x_s)] - S_t(\omega)(1-\sum_{s=0}^{t-1}x_s) + \delta_{\reals_+}(1-\sum_{s=0}^{t-1} x_s).
\end{split}
\]
By Example~\ref{ex:celq}, the $\F_{t-1}$-conditional expectation of the second last term is $E_{t-1}[S_t](\omega)(\sum_{s=0}^{t-1} x_s-1)$, so the claim holds for $t-1$. It is clear that the argmax over $x_t$ always contains either $0$ or $1-\sum_{s=0}^{t-1}x_s$. Thus, if $\sum_{s=0}^{t-1}x_s$ takes values in $\{0,1\}$, we can choose an optimal $x_t$ such that $\sum_{s=0}^{t}x_s$ takes values in $\{0,1\}$. Thus an induction gives an optimal strategy taking values in $\{0,1\}$.
\end{proof}

The above is classical in stochastic analysis (see e.g.\ \cite{ps6}) but our proof via the convex relaxation \eqref{ros} seems new. The process $(R_t)_{t=0}^T$ is {\em Markov} if, for all $t$, $(R_{t+1},\dots,R_T)$ is $R_t$-conditionally independent of $\F_t$.

\begin{remark}
If $R\in L^1$ is a Markov process, then $S_{t} = \psi_t(R_t)$ for some measurable function $\psi_t$. In particular, the optimal stopping times $\tau\in\T$ are characterized by the condition $R_\tau=\psi_\tau(R_\tau)$.
\end{remark}

\begin{proof}
Clearly, $S_{T+1} :=0$ is of the required form. Assume that $S_{t+1}=\psi_{t+1}(R_{t+1})$ so that $S_t=\max\{R_t,E_t [\psi_{t+1}(R_{t+1})]\}$. By the conditional independence and Doob-Dynkin lemma \cite[Lemma 1.13]{kal2}, $E_t [\psi_{t+1}(R_{t+1})]=E^{\sigma(R_t)} [\psi_{t+1}(R_{t+1})]=\hat\psi_t(R_t)$ for some measurable function $\hat\psi_t$. Defining $\psi_t(x):=\max\{x,\hat\psi_t(x)\}$, we get $S_t=\psi_t(R_t)$, so the claim follows by induction.
\end{proof}

\subsection{Optimal control}\label{sec:ocdp}

Consider the optimal control problem 
\begin{equation}\label{oc}\tag{OC}
\begin{aligned}
&\minimize\quad & & E\left[\sum_{t=0}^{T} L_t(X_t,U_t)\right]\quad\ovr\ (X,U)\in\N,\\
&\st\quad & & \Delta X_{t}=A_t X_{t-1} +B_t U_{t-1}+W_t\quad t=1,\dots,T
\end{aligned}
\end{equation}
where the {\em state} $X$ and the {\em control} $U$ are processes with values in $\reals^N$ and $\reals^M$, respectively, $A_t$ and $B_t$ are $\F_t$-measurable random matrices, $W_t$ is an $\F_t$-measurable random vector and the functions $L_t$ are convex normal integrands. The linear constrains in \eqref{oc} are called the {\em system equations}. Problem \eqref{oc} fits the general framework with $x=(X,U)$ and 
\[
h(x,\omega)=\sum_{t=0}^{T}L_t(X_t,U_t,\omega) + \sum_{t=1}^{T}\delta_{\{0\}}(\Delta X_{t}- A_t(\omega)X_{t-1}-B_t(\omega)U_{t-1}-W_t(\omega)).
\]

The special structure in \eqref{oc} allows us to express the solution $(h_t)_{t=0}^T$ of the general Bellman equations \eqref{be} in terms of normal integrands $J_t:\reals^N\times\Omega\to\ereals$ and $I_t:\reals^N\times\reals^M\times\Omega\to\ereals$ that solve the following ``dynamic programming'' equations 
\begin{align}
\begin{split}\label{be_OC}
J_{T+1} &= 0\\
I_{t+1}(X_t,U_t) &= J_{t+1}(X_{t}+A_{t+1} X_{t}+B_{t+1} U_{t}+W_{t+1})\\
J_t(X_t) &= \inf_{U_t\in\reals^{M}} E_t(L_t+ I_{t+1})(X_t,U_t).
\end{split}
\end{align}
Note that $J_t$ is a function only of $x_t$ and $\omega$ while the functions $h_t$ in the general Bellman equations \eqref{be} may depend on the whole path $x^t$ of $x$ up to time $t$. The functions $J_t$ are often called the ``value functions'' or ``cost-to-go functions''. This terminology is well justified by \thref{thm:dpOC} below. 
Informally, \eqref{be_OC} can be written in the more familiar form
\begin{align*}
J_{T+1} &= 0\\
J_t(X_t) &= \inf_{U_t\in\reals^{M}} E_t[L_t(X_t,U_t)+J_{t+1}(X_{t}+A_{t+1} X_{t}+B_{t+1}U_{t} + W_{t+1})],
\end{align*}
which are the equations studied in e.g.\ \cite[Section~2.2]{ber76}, \cite[Section~1.2]{bs78} and \cite[Proposition~4.12]{cccd15} when applied to the convex case. We have introduced the functions $I_t$ to clarify that the conditional expectations are taken here in the sense of normal integrands. This resolves many of the measurability problems that arise in earlier formulations. The optimality conditions and the existence results for the convex control problems in \thref{thm:dpOC,thm:dpOC2} below seem new in the presented generality.

Essentially, our formulation of \eqref{oc} in terms of normal integrands is what \cite[Section~1.2.II]{bs78} calls ``semicontinuous models''. In \cite{bs78}, these models were, however, interpreted quite narrowly with the exclusion of even the classical linear quadratic model in \thref{ex:lqc}. The discrepancy seems to have come from the belief that ``in the usual stochastic programming model, the controls cannot influence the distribution of future states''; see page 12 of \cite{bs78}. It is clear in \eqref{oc}, however, that the controls do influence the distribution of future states. What is crucial for convexity is that the state equations are affine. In nonconvex dynamic programming recursions of~\cite{evs76,ppr16}, even this assumption can be relaxed.

Compared to the general Bellman equations \eqref{be}, \eqref{be_OC} provides a significant dimension-reduction with respect to time: the optimal control does not depend on the past states. This is often referred to as the ``dynamic programming principle''. This reduction is essentially due to the time-separable structure of \eqref{oc} where, conditionally on the current state, the future is ``independent'' of the previous states. We will see a further dimension-reduction with respect to scenarios when the random elements in the problem exhibit certain form of independence; see \thref{ex:scIndep} below. Under appropriate conditions, the functions $J_t$ in \eqref{be_OC} turn out to be deterministic.

When applied to \eqref{oc}, \thref{thm:dp0} gives the following.

\begin{theorem}\thlabel{thm:dpOC0}
Assume that \eqref{oc} is feasible, $L_t$ are lower bounded and that $(I_t,J_t)_{t=0}^T$ is a solution of \eqref{be_OC}. Then the optimum value of the optimal control problem coincides with that of
\begin{equation*}
\begin{aligned}
&\minimize\quad & & E\left[\sum_{s=0}^{t-1} (E_t L_s)(X_s,U_s)+J_t(X_t)\right]\quad\ovr\ (X^t,U^t)\in\N^t,\\
&\st\quad & & \Delta X_{s}= A_s X_{s-1}+B_s U_{s-1} +W_s\quad s=1,\dots, t
\end{aligned}
\end{equation*}
for all $t=0,\dots,T$ and, moreover, an $(\bar X,\bar U)\in\N$ solves \eqref{oc} if and only if it satisfies the system equations and
\[
\bar U_t\in\argmin_{U_t\in\reals^{M}} E_t(L_t+I_{t+1})(\bar X_{t},\bar U_t).
\]
for all $t=0,\ldots,T$. If
\[
N_t(\omega)= \{(X_{t},U_{t})\in\reals^{n_{t}}\mid X_{t}=0,\ (E_t(L_{t}+I_{t+1}))^\infty(0,U_{t},\omega)\le 0\}
\]
is linear-valued for all $t=0,\ldots,T$, then there exists an optimal $x\in\N$ with $x_t\perp N_t$ almost surely.
\end{theorem}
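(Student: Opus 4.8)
The plan is to reduce everything to \thref{thm:dp0} by exhibiting a solution of the general Bellman equations \eqref{be} built from the given solution $(I_t,J_t)_{t=0}^T$ of \eqref{be_OC}. Since each $L_t$ is lower bounded and the indicator terms in $h$ are nonnegative, $h$ is lower bounded, so together with feasibility of \eqref{oc} the only hypothesis of \thref{thm:dp0} still to be checked is that \eqref{be} has a solution.

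First I would verify, by backward induction on $t$, that the normal integrands
\[
h_t(x^t,\omega):=\sum_{s=0}^{t-1}(E_tL_s)(X_s,U_s)+(E_t(L_t+I_{t+1}))(X_t,U_t)+\sum_{s=1}^{t}\delta_{\{0\}}(\Delta X_s-A_sX_{s-1}-B_sU_{s-1}-W_s)
\]
together with the intermediate integrands
\[
\tilde h_t(x^t,\omega):=\sum_{s=0}^{t}(E_{t+1}L_s)(X_s,U_s)+I_{t+1}(X_t,U_t)+\sum_{s=1}^{t}\delta_{\{0\}}(\Delta X_s-A_sX_{s-1}-B_sU_{s-1}-W_s)
\]
solve \eqref{be}. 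In the base case $t=T$ one has $I_{T+1}=0$ since $J_{T+1}=0$, each $L_s$ is L-bounded, and the indicator terms are $\F_T$-measurable because $A_s,B_s,W_s$ are $\F_s$-measurable; hence $h_T=E_Th$ by \thref{thm:condoperations} and \thref{lem:citest}. For the inductive step, in $\inf_{x_{t+1}}h_{t+1}(x^t,x_{t+1},\cdot)$ the indicator $\delta_{\{0\}}(\Delta X_{t+1}-A_{t+1}X_t-B_{t+1}U_t-W_{t+1})$ forces $X_{t+1}=X_t+A_{t+1}X_t+B_{t+1}U_t+W_{t+1}$, so the remaining infimum over $U_{t+1}$ equals $J_{t+1}(X_t+A_{t+1}X_t+B_{t+1}U_t+W_{t+1})=I_{t+1}(X_t,U_t)$ by the defining relations of $J_{t+1}$ and $I_{t+1}$; this produces $\tilde h_t$. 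Applying $E_t$, using the tower property (\thref{thm:tower}) on the terms $(E_{t+1}L_s)$ with $s\le t$, additivity (\thref{thm:condoperations}) throughout, \thref{lem:citest} on the $\F_t$-measurable indicator terms, and finally recombining $E_tL_t+E_tI_{t+1}=E_t(L_t+I_{t+1})$, yields $h_t=E_t\tilde h_t$ in the claimed form. All the summands are lower bounded — $I_{t+1}$ inherits an integrable lower bound by an auxiliary backward induction starting from $J_{T+1}=0$ — so these manipulations are justified by the results of Section~\ref{sec:ceni}.

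With \eqref{be} solved, \thref{thm:dp0} applies and gives $\inf\eqref{sp}=\inf_{x^t\in\N^t}Eh_t(x^t)$, the optimality characterization $\bar x_t\in\argmin_{x_t}h_t(\bar x^{t-1},x_t)$ a.s., and the final $N_t$-statement; it then remains only to rewrite these in the notation of \eqref{be_OC}. For the value functions, finiteness of $Eh_t(x^t)$ forces the system equations for $s=1,\dots,t$ to hold a.s.; since $U_t$ is then unconstrained, the interchange of minimization and integration (\cite[Theorem~14.60]{rw98}, as already used in the proof of \thref{thm:dp0}) together with $J_t=\inf_{U_t}E_t(L_t+I_{t+1})(\cdot,U_t)$ identify $\inf_{x^t\in\N^t}Eh_t(x^t)$ with the optimum value of the stated reduced problem. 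For the optimality conditions, $\bar x_t\in\argmin_{x_t}h_t(\bar x^{t-1},x_t)$ a.s.\ says exactly that $\bar X_t$ equals the value dictated by the system equation at step $t$ and, given that, $\bar U_t\in\argmin_{U_t}E_t(L_t+I_{t+1})(\bar X_t,U_t)$. For the last claim, a computation of horizon functions — the recession of $\delta_{\{0\}}(\Delta X_s-A_sX_{s-1}-B_sU_{s-1}-W_s)$ drops the constant $W_s$, all terms indexed by $s<t$ vanish on $\{x^{t-1}=0\}$, and the $s=t$ indicator forces $X_t=0$ there — shows that $N_t$ coincides with the set in the statement, so the last claim follows from that of \thref{thm:dp0}.

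The main obstacle is the inductive step of the second paragraph: one must make sure that the infimum over $X_{t+1}$ genuinely collapses, via the affine indicator, onto the inf-projection that \emph{defines} $J_{t+1}$, so that $\tilde h_t$ is exactly the normal integrand written above, and that every use of the tower property, of additivity, and of ``$E^\G h=h$ for $\G$-measurable $h$'' is licensed by L-boundedness of the relevant integrand — which is precisely why the auxiliary induction showing that each $J_t$ and each $I_t$ has an integrable lower bound is needed.
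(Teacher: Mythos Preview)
Your proposal is correct and follows essentially the same route as the paper: the paper derives \thref{thm:dpOC0} as the special case $p=0$, $y=0$ of \thref{thm:dpOC}, whose proof does exactly what you outline --- construct the candidate $h_t$'s from $(I_t,J_t)$ via the formula you wrote, verify by backward induction (using \thref{thm:condoperations} and \thref{thm:tower}) that they solve \eqref{be}, and then invoke the general verification theorem (\thref{thm:dp0} in the lower bounded case), with \cite[Theorem~14.60]{rw98} handling the passage from $E_t(L_t+I_{t+1})$ to $J_t$ in the value expression. Your identification of the $N_t$ sets and of the optimality condition also matches the paper's computations.
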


Rather than proving \thref{thm:dpOC0} directly, we will prove the following more general result the proof of which is based on \thref{thm:dpLb}. In addition to $p\in\N^\perp$, the assumptions involve $y_t\in L^1(\reals^N)$. Throughout, we set $y_0:=y_{T+1}:=0$ and $X_{T+1}:=0$.

\begin{theorem}\thlabel{thm:dpOC}
Assume that \eqref{oc} is feasible, $(I_t,J_t)_{t=0}^T$ is an $L$-bounded solution of \eqref{be_OC} and that there exists $p\in\N^\perp$, $y \in L^1$ and $m_t\in L^1$ such that  $A_t^* y_t, B_t^* y_t$ and $W_t\cdot y_t$ are integrable for all $t$ and
\begin{enumerate}
\item $E\sum_{t=0}^T L_t(X_t,U_t)=E\sum_{t=0}^T[L_t(X_t,U_t)-(X_t,U_t)\cdot p_t]$ for all $(X,U)\in\N$ satisfying the system equations,
\item $L_t(X_t,U_t) \ge (X_t,U_t)\cdot p_t - X_t\cdot\Delta y_{t+1} - (A_{t+1} X_{t}+B_{t+1} U_{t} +W_{t+1})\cdot y_{t+1}-m_t$ almost surely for all $t$.
\end{enumerate}
Then the optimum value of the optimal control problem coincides with that of
\begin{equation*}
\begin{aligned}
&\minimize\quad & & E\left[\sum_{s=0}^{t-1} (E_t L_s)(X_s,U_s)+J_t(X_t)\right]\quad\ovr\ (X^t,U^t)\in\N^t,\\
&\st\quad & & \Delta X_{s}= A_s X_{s-1}+B_s U_{s-1} +W_s\quad s=1,\dots, t
\end{aligned}
\end{equation*}
for all $t=0,\dots,T$ and, moreover, an $(\bar X,\bar U)\in\N$ solves \eqref{oc} if and only if it satisfies the system equations and
\[
\bar U_t\in\argmin_{U_t\in\reals^{M}} E_t(L_t+I_{t+1})(\bar X_{t},\bar U_t).
\]
for all $t=0,\ldots,T$. If
\[
N_t(\omega)= \{(X_{t},U_{t})\in\reals^{n_{t}}\mid X_{t}=0,\ (E_t(L_{t}+I_{t+1}))^\infty(0,U_{t},\omega)\le 0\}
\]
is linear-valued for all $t=0,\ldots,T$, then there exists an optimal $x\in\N$ with $x_t\perp N_t$ almost surely.
\end{theorem}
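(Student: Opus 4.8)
The strategy is to reduce Theorem~\ref{thm:dpOC} to Theorem~\ref{thm:dpLb} applied to the normal integrand $h$ that encodes \eqref{oc}, and then to translate the conclusions of Theorem~\ref{thm:dpLb}, which are phrased in terms of the path-dependent functions $(h_t)_{t=0}^T$ solving \eqref{be}, into the state-only functions $(I_t,J_t)_{t=0}^T$ solving \eqref{be_OC}. First I would record the explicit form of $h$:
\[
h(x,\omega)=\sum_{t=0}^{T}L_t(X_t,U_t,\omega) + \sum_{t=1}^{T}\delta_{\{0\}}(\Delta X_{t}- A_t(\omega)X_{t-1}-B_t(\omega)U_{t-1}-W_t(\omega)),
\]
with $x=(X,U)$. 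The candidate for the $p\in\N^\perp$ required by Theorem~\ref{thm:dpLb} is exactly the $p$ given in the hypotheses. Condition~1 of the present theorem is precisely the statement $Eh(x)=Ek(x)$ on the feasible set, but one must check it holds on all of $\N$; off the feasible set both sides are $+\infty$ because of the $\delta_{\{0\}}$ terms, so condition~1 as stated suffices. Thus hypothesis (ii) of Theorem~\ref{thm:dpLb} will follow once I verify that $k:=h-x\cdot p$ is lower bounded.

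**Lower boundedness of $k$.** This is where condition~2 and the auxiliary processes $y_t$ enter, and I expect it to be the main technical obstacle. On the feasible set the system equations force $\Delta X_{t+1}=A_{t+1}X_t+B_{t+1}U_t+W_{t+1}$, so the telescoping sum
\[
\sum_{t=0}^{T}\bigl[X_t\cdot\Delta y_{t+1}+(A_{t+1}X_t+B_{t+1}U_t+W_{t+1})\cdot y_{t+1}\bigr]
=\sum_{t=0}^{T}\bigl[X_t\cdot\Delta y_{t+1}+\Delta X_{t+1}\cdot y_{t+1}\bigr]
=\sum_{t=0}^{T}\Delta(X_{t+1}\cdot y_{t+1})
\]
collapses, using $y_0=y_{T+1}=0$ and $X_{T+1}=0$, to $0$. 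Summing the bound in condition~2 over $t$ therefore yields $h(X,U)\ge (X,U)\cdot p - \sum_t m_t$ on the feasible set, and $h\equiv+\infty$ off it, so $k=h-x\cdot p\ge -\sum_t m_t\in L^1$ everywhere. I would write this telescoping argument carefully, being explicit that the integrability of $A_t^*y_t$, $B_t^*y_t$ and $W_t\cdot y_t$ is what makes the individual terms integrable so that the manipulation is legitimate. This gives hypothesis (i) of Theorem~\ref{thm:dpLb}; hypothesis (iii)—that $\{x\in\N\mid k^\infty(x)\le 0\text{ a.s.}\}$ be linear—is not hypothesized here, so I must instead invoke the last clause of Theorem~\ref{thm:dpLb}, which only needs $N_t$ to be linear-valued when we want the $x_t\perp N_t$ conclusion; the optimum-value identity and the characterization \eqref{oc0} hold without it.

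**Passing from \eqref{be} to \eqref{be_OC}.** Having applied Theorem~\ref{thm:dpLb}, I obtain that \eqref{be} has a solution $(h_t)$, that $\inf\eqref{oc}=\inf_{x^t\in\N^t}Eh_t(x^t)$, and that $\bar x$ is optimal iff $\bar x_t\in\argmin_{x_t}h_t(\bar x^{t-1},x_t)$ a.s. It remains to show that, given the $L$-bounded solution $(I_t,J_t)$ of \eqref{be_OC}, the functions
\[
h_t(x^t,\omega):=\sum_{s=0}^{t-1}(E_tL_s)(X_s,U_s,\omega)+J_t(X_t,\omega)+\sum_{s=1}^{t}\delta_{\{0\}}(\Delta X_s-A_sX_{s-1}-B_sU_{s-1}-W_s)
\]
solve \eqref{be}; by uniqueness in Theorem~\ref{thm:dpLb} this identification is then forced. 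I would verify this by backward induction on $t$: at $t=T$ one checks $h_T=E_Th$ using $J_{T+1}=0$ and that the inner $\inf$ over $x_{t+1}=(X_{t+1},U_{t+1})$ in \eqref{be} splits into the trivial minimization over $X_{t+1}$ killed by the $\delta_{\{0\}}$ constraint (which substitutes $X_{t+1}=X_t+A_{t+1}X_t+B_{t+1}U_t+W_{t+1}$, i.e. the definition of $I_{t+1}$) followed by the minimization over $U_{t+1}$, and then applying $E_t$ to match the definition of $J_{t+1}$ in \eqref{be_OC}; the conditional-expectation manipulations are justified by \thref{thm:condoperations} and the tower property \thref{thm:tower} exactly as in the optimal-stopping proof above. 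Once $h_t$ has this form, the reformulated minimization problem in the statement is just $\inf_{x^t\in\N^t}Eh_t(x^t)$ rewritten, the optimality condition \eqref{oc0} becomes $\bar U_t\in\argmin_{U_t}(E_t(L_t+I_{t+1}))(\bar X_t,U_t)$ after discarding the $X_t$-terms that are determined by the system equations, and $N_t$ in Theorem~\ref{thm:dpLb} coincides with the $N_t$ written here because $h_t^\infty$ restricted to $x^{t-1}=0$ reduces to $(E_t(L_t+I_{t+1}))^\infty(0,U_t,\omega)$ plus the constraint $X_t=0$. The last sentence of the theorem then follows verbatim from the corresponding clause of Theorem~\ref{thm:dpLb}.
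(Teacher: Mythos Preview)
Your overall strategy is exactly the paper's: verify the hypotheses of \thref{thm:dpLb} (lower boundedness of $k$ via the telescoping sum, $Eh=Ek$ on $\N$ from condition~1, feasibility), and then show that the $(h_t)$ built from the given $(I_t,J_t)$ solve \eqref{be}. The telescoping argument and the observation that the linearity of $N_t$ is only needed for the final clause are both correct.

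There is, however, a genuine error in your candidate for $h_t$. You write
\[
h_t(x^t,\omega)=\sum_{s=0}^{t-1}(E_tL_s)(X_s,U_s,\omega)+J_t(X_t,\omega)+\sum_{s=1}^{t}\delta_{\{0\}}(\cdots),
\]
but this does not depend on $U_t$. At $t=T$ one needs $h_T=E_Th$, and $E_Th$ does depend on $U_T$ through $(E_TL_T)(X_T,U_T)$; with your formula $h_T$ carries $J_T(X_T)=\inf_{U_T}(E_TL_T)(X_T,U_T)$ instead, so the equality fails. The correct expression, which the paper establishes by backward induction, is
\[
h_t(x^t,\omega)=\sum_{s=0}^{t-1}(E_tL_s)(X_s,U_s,\omega)+E_t(L_t+I_{t+1})(X_t,U_t,\omega)+\sum_{s=1}^{t}\delta_{\{0\}}(\cdots).
\]
This fix is not cosmetic: with your $h_t$ the set $\{x_t\mid h_t^\infty(x^t,\omega)\le 0,\ x^{t-1}=0\}$ collapses to $\{0\}\times\reals^M$ rather than the $N_t$ in the statement, and the condition $\bar x_t\in\argmin_{x_t}h_t(\bar x^{t-1},x_t)$ places no constraint on $\bar U_t$, so you cannot recover $\bar U_t\in\argmin_{U_t}E_t(L_t+I_{t+1})(\bar X_t,U_t)$. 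Once you use the correct $h_t$, one further step is needed to match the theorem's value formula: $\inf_{x^t\in\N^t}Eh_t(x^t)$ contains $E_t(L_t+I_{t+1})(X_t,U_t)$, and an application of the interchange rule \cite[Theorem~14.60]{rw98} over $U_t\in L^0(\F_t)$ converts this to the displayed problem involving $J_t(X_t)$.
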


\begin{proof}
Let $k(x,\omega):=h(x,\omega)-x\cdot p(\omega)$. Condition 1 means that $Ek=Eh$ on $\N$. For any $(X,U)$ satisfying the system equations, the lower bound in 2 can be written as
\[
L_t(X_t,U_t) \ge (X_t,U_t)\cdot p_t - X_t\cdot\Delta y_{t+1} - \Delta X_{t+1}\cdot y_{t+1}-m_t.
\]
Summing up, shows that $k$ is lower bounded. All the claims follow from \thref{thm:dpLb} once we show that $(h_t)_{t=0}^T$ given by
\begin{align}
  \begin{split}\label{eq:beOC}
    h_t(x^t,\omega) &:= \sum_{s=0}^{t-1}(E_tL_s)(X_{s},U_s,\omega) + E_{t}(L_{t}+I_{t+1})(X_{t},U_{t})\\
    &\quad+  \sum_{s=1}^t\delta_{\{0\}}(-\Delta X_{s}+A_s X_{s-1}+B_s U_{s-1} +W_s)
  \end{split}
\end{align}
is an $L$-bounded solution of \eqref{be}. Indeed, by \thref{thm:dpLb}, the optimum value of \eqref{oc} can then be expressed as
\begin{align*}
  \inf_{x^t\in\N^t}Eh_t(x^t) &= \inf_{x^t\in\N^t}\{\sum_{s=0}^{t-1}(E_tL_s)(X_{s},U_s,\omega) + E_{t}(L_{t}+I_{t+1})(X_{t},U_{t})\mid \\
  &\qquad\qquad\qquad\qquad \Delta x_s = A_s X_{s-1}+B_sU_{s-1}+W_s\ s=1,\ldots,t\}\\
  &= \inf_{x^t\in\N^t}\{\sum_{s=0}^{t-1}(E_tL_s)(X_{s},U_s,\omega) + J_t(X_t)\mid \\
  &\qquad\qquad\qquad\qquad \Delta x_s = A_s X_{s-1}+B_sU_{s-1}+W_s\ s=1,\ldots,t\},
\end{align*}
where the second inequality holds by \cite[Theorem 14.60]{rw98}, Also, if \eqref{eq:beOC} holds, then by \thref{thm:dpLb}, an $(\bar X,\bar U)\in\N$ solves \eqref{oc} if and only if $(\bar X_t,\bar U_t)$ minimizes \eqref{eq:beOC} almost surely.

Assume that \eqref{eq:beOC} solve \eqref{be} from $t$ onwards. Then
\begin{align}
\tilde h_{t-1}(x^{t-1}) &= \inf_{x_t\in\reals^{n_t}} h_t(x^{t-1},x_t)\nonumber\\
&=\sum_{s=0}^{t-1}(E_tL_s)(X_{s},U_s)\nonumber\\
&\quad + \inf_{(X_t,U_t)}\{ E_{t}(L_{t}+I_{t+1})(X_{t},U_{t})\mid X_{t}= X_{t-1}+A_t X_{t-1}+B_t U_{t-1} +W_t\}\nonumber\\
&\quad +\sum_{s=1}^{t-1}\delta_{\{0\}}(-\Delta X_{s}+A_s X_{s-1}+B_s U_{s-1} +W_s)\nonumber\\
&=\sum_{s=0}^{t-1}(E_tL_s)(X_{s},U_s)+J_t(X_{t-1}+A_{t} X_{t-1}+B_{t} U_{t-1} +W_t)\nonumber\\
&\quad+\sum_{s=1}^{t-1}\delta_{\{0\}}(-\Delta X_{s}+A_s X_{s-1}+B_s U_{s-1} +W_s)\nonumber\\
&=\sum_{s=0}^{t-1}(E_tL_s)(X_{s},U_s)+I_t(X_{t-1},U_{t-1})\nonumber\\
&\quad+\sum_{s=1}^{t-1}\delta_{\{0\}}(-\Delta X_{s}+A_s X_{s-1}+B_s U_{s-1} +W_s)\label{eq:tildehOC}
\end{align}
By \thref{thm:condoperations} again,
\begin{align*}
h_{t-1}(x^{t-1}) &=E_{t-1} \tilde h_{t-1}(x^{t-1})\\
& = \sum_{s=0}^{t-2}(E_{t-1}L_s)(X_{s},U_s,\omega) + E_{t-1}(L_{t-1}+I_{t})(X_{t-1},U_{t-1})\\
&\quad +  \sum_{s=1}^{t-1}\delta_{\{0\}}(-\Delta X_{s}+A_s X_{s-1}+B_s U_{s-1} +W_s).
\end{align*}
We have $h_T=E_Th$, by \thref{thm:condoperations}. Thus, by induction, the normal integrands $h_t$ given by \eqref{eq:beOC} solve \eqref{be}.
\end{proof}

\begin{remark}\thlabel{rem:cond1dual}
The lower bounds in \thref{thm:dpOC} hold if and only if 
\[
E[L^*_t(p_t - (\Delta y_{t+1}+A^*_{t+1}y_{t+1},B^*_{t+1}y_{t+1}))-W_{t+1}\cdot y_{t+1}]<\infty,
\]
for all $t$. Here $y_0 := y_{T+1} := 0$, $A_{T+1}:=0$ and $B_{T+1}:=0$. The assumption is closely connected with the feasibility of a problem dual to \eqref{oc}; see \cite{pp22b}.
\end{remark}

\begin{proof}
By Fenchel's inequality, for feasible $(X,U)$,
\begin{align*}
&L_t(X_t,U_t)+L^*_t(p_t - (\Delta y_{t+1}+A^*_{t+1}y_{t+1},B^*_{t+1}y_{t+1}))-W_{t+1}\cdot y_{t+1}\\
 &\ge (X_t,U_t)\cdot p_t -X_t\cdot(\Delta y_{t+1}+A^*_{t+1}y_{t+1})  -U_t\cdot(B^*_{t+1}y_{t+1})-W_{t+1}\cdot y_{t+1} \\
 &=(X_t,U_t)\cdot p_t +X_t\cdot y_t -(X_t+A_{t+1} X_t+B_{t+1} U_t+W_{t+1})\cdot y_{t+1}\\
 &=(X_t,U_t)\cdot p_t +X_t\cdot y_t -(X_{t+1})\cdot y_{t+1}\\
 &=(X_t,U_t)\cdot p_t - X_t\cdot\Delta y_{t+1} - \Delta X_{t+1}\cdot y_{t+1},
\end{align*}
which gives the equivalence.
\end{proof}

\thref{thm:belb} gives the following existence result for \eqref{be_OC}.

\begin{theorem}\thlabel{thm:dpOC20}
Assume that $L_t$ are lower bounded and that the set
\[
\left\{(X,U)\in\N \midb \sum_{t=0}^TL^\infty_t(X_t,U_t)\le 0,\ \Delta X_{t}= A_t X_{t-1}+B_t U_{t-1}\right\}
\]
is linear. Then \eqref{be_OC} has a unique solution $(J_t,I_t)_{t=0}^{T+1}$, each $J_t$ and $I_t$ is lower bounded and
\begin{equation*}\label{eq:OCrec}
N_t(\omega):=\{(X_{t},U_{t})\in\reals^{n_{t}}\mid X_{t}=0,\ (E_t(L_{t}+I_{t+1}))^\infty(0,U_t,\omega)\le 0\}
\end{equation*}
is linear-valued for all $t$.
\end{theorem}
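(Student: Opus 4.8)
The plan is to derive \thref{thm:dpOC20} from \thref{thm:belb} applied to the normal integrand
\[
h(x,\omega)=\sum_{t=0}^{T}L_t(X_t,U_t,\omega) + \sum_{t=1}^{T}\delta_{\{0\}}(\Delta X_{t}- A_t(\omega)X_{t-1}-B_t(\omega)U_{t-1}-W_t(\omega))
\]
associated with \eqref{oc}. First I would verify \thref{ass:dplb}. Lower boundedness of $h$ is immediate from that of the $L_t$, since the constraint terms are nonnegative. A recession computation — using that the recession of a sum of proper convex functions with a common domain point is the sum of the recessions, and that the recession of $\delta_{\{0\}}(\cdot-W_t)$ precomposed with an affine map equals $\delta_{\{0\}}$ precomposed with its linear part — gives
\[
h^\infty(x,\omega)=\sum_{t=0}^{T}L_t^\infty(X_t,U_t,\omega)+\sum_{t=1}^{T}\delta_{\{0\}}(\Delta X_t-A_tX_{t-1}-B_tU_{t-1}),
\]
so that $\L:=\{x\in\N\mid h^\infty(x)\le 0\}$ is exactly the set assumed linear in the statement (feasibility of \eqref{oc}, which also guarantees the domain compatibility used in the recession calculus, being assumed). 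Hence \thref{thm:belb} yields a unique solution $(h_t)_{t=0}^T$ of \eqref{be} consisting of lower bounded normal integrands, and the mappings $\hat N_t(\omega):=\{x_t\mid h_t^\infty(x^t,\omega)\le 0,\ x^{t-1}=0\}$ are linear-valued.

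Next I would show, by a backward induction mirroring the computation \eqref{eq:tildehOC} and the display following it in the proof of \thref{thm:dpOC}, that $h_t$ necessarily has the separable form \eqref{eq:beOC} for normal integrands $J_t,I_t$ defined by \eqref{be_OC}. Since $h$ is $\F_T$-measurable, $h_T=E_Th=h$, which is \eqref{eq:beOC} at $t=T$ with $J_{T+1}=0$ and $I_{T+1}=0$. For the inductive step, infimizing $h_t$ over $x_t=(X_t,U_t)$ pins $X_t$ through the system equation and replaces the remaining infimum over $U_t$ by $J_t$, producing $I_t$ inside $\tilde h_{t-1}$; applying $E_{t-1}$ together with \thref{thm:condoperations} then returns $h_{t-1}$ in the form \eqref{eq:beOC}. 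Along the way one checks that $J_t$ and $I_t$ are genuine normal integrands — composition with the $\F_{t+1}$-measurable affine map, addition of normal integrands, the conditional expectation (via \thref{thm:existce}), and the infimal projection all preserve normality, see \cite[Chapter~14]{rw98} — and that they are lower bounded: this propagates trivially backwards, since $I_{t+1}$ inherits any lower bound of $J_{t+1}$ under composition, and $J_t=\inf_{U_t}E_t(L_t+I_{t+1})(X_t,\cdot)$ inherits the integrable lower bound $E_t$ of the lower bound of $L_t+I_{t+1}$ by \thref{lem:cp1}. Uniqueness of $(J_t,I_t)$ follows from uniqueness of $(h_t)$: conversely, any solution of \eqref{be_OC} produces, via \eqref{eq:beOC}, a solution of \eqref{be} — this is precisely the forward computation already carried out in the proof of \thref{thm:dpOC} — which must then equal $(h_t)$, and \eqref{eq:beOC} in turn determines $J_t$ and $I_t$.

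Finally I would identify $N_t$ with $\hat N_t$. Evaluating $h_t^\infty$ at a direction with $x^{t-1}=0$: the terms $(E_tL_s)^\infty$ and the constraint recession terms for $s<t$ vanish, the constraint recession term for $s=t$ reduces to $\delta_{\{0\}}(X_t)$, and the middle term of \eqref{eq:beOC} contributes $(E_t(L_t+I_{t+1}))^\infty(X_t,U_t,\omega)$; hence $h_t^\infty(x^t,\omega)\le 0$ with $x^{t-1}=0$ holds if and only if $X_t=0$ and $(E_t(L_t+I_{t+1}))^\infty(0,U_t,\omega)\le 0$, i.e. $\hat N_t=N_t$, which is therefore linear-valued by \thref{thm:belb}. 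The step I expect to be most delicate is the bookkeeping of the recession-function identities — moving $(\cdot)^\infty$ past the finite sums and past the affine substitutions in \eqref{be_OC}, which is exactly where feasibility (properness) of \eqref{oc} is used — while the measurability of the infimal projection $J_t$ is the other point requiring care, though it is of the same nature as, and is largely inherited from, the corresponding step already contained in \thref{thm:belb}.
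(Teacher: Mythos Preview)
Your approach is essentially the paper's: the paper does not prove \thref{thm:dpOC20} separately but obtains it from \thref{thm:dpOC2} with $p=0$ and $y=0$, and the proof of \thref{thm:dpOC2} is precisely the backward induction you describe (apply the abstract result to $h$ to get $(h_t)$, then build $(J_t,I_t)$ and verify \eqref{eq:beOC}, using \thref{thm:ip} for the infimal projection and identifying $N_t$ with $\hat N_t$). So the strategy is right.

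Two points to correct or tighten. First, the base case: you claim ``$h$ is $\F_T$-measurable, $h_T=E_Th=h$''. The $L_t$ are only assumed to be $\F$-normal integrands, not adapted, so $h$ need not be $\F_T$-measurable. The correct base case is $h_T=E_Th=\sum_{s=0}^T(E_TL_s)(X_s,U_s)+\text{(constraint indicators)}$ via \thref{thm:condoperations}, which is exactly \eqref{eq:beOC} at $t=T$; the $E_TL_s$ terms are then carried through the induction and become the $E_tL_s$ in \eqref{eq:beOC} by \thref{thm:tower}. Second, the order of the induction matters more than your sketch suggests: at each step you first need the formula for $h_t$ (from $E_t\tilde h_t$) to identify $N_t=\hat N_t$, and only then can you invoke \thref{thm:ip} to conclude that $J_t$ is a normal integrand; this is how the paper's proof of \thref{thm:dpOC2} is arranged. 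Your parenthetical about feasibility is apt---it is what makes the scenariowise recession calculus legitimate---though note that \thref{thm:dpOC20} as stated does not list it; the paper handles this implicitly as well.
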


Rather than proving \thref{thm:dpOC20} directly, we will prove the following more general result the proof of which is based on \thref{thm:be}.

\begin{theorem}\thlabel{thm:dpOC2}
Assume that there exists $p\in\N^\perp$, $y \in L^1$ and $m_t\in L^1$ such that
\begin{enumerate}
\item the set
\[
\left\{(X,U)\in\N \midb \sum_{t=0}^T (L^\infty_t(X_t,U_t)-p_t\cdot(X_t,U_t))\le 0,\ \Delta X_{t}= A_t X_{t-1}+B_t U_{t-1}\right\}
\]
is linear,
\item $L_t(X_t,U_t) \ge (X_t,U_t)\cdot p_t - X_t\cdot\Delta y_{t+1} - (A_{t+1} X_{t}+B_{t+1} U_{t} +W_{t+1})\cdot y_{t+1}-m_t$ almost surely for all $t$.
\item $A_t^*y_t$, $B_t^*y_t$ and $W_t\cdot y_t$ are integrable and $E_t[A_t^*y_t]=A_t^*E_ty_t$, $E_t[B_t^*E_t y_t]=B_t^*E_t y_t$ and $E_t[W_t\cdot y]=W_t\cdot E_t y_t$ for all $t$.
\end{enumerate}
Then \eqref{be_OC} has a unique solution $(J_t,I_t)_{t=0}^{T+1}$, each $J_t$ and $I_t$ is L-bounded and
\begin{equation*}\label{eq:OCrec}
N_t(\omega):=\{(X_{t},U_{t})\in\reals^{n_{t}}\mid X_{t}=0,\ (E_t(L_{t}+I_{t+1}))^\infty(0,U_t,\omega)\le 0\}
\end{equation*}
is linear-valued for all $t$.
\end{theorem}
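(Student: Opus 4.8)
The plan is to reduce the statement to \thref{thm:be} applied to $h$ with the given $p\in\N^\perp$, and then to read the cost-to-go functions $(J_t,I_t)$ off the resulting solution $(h_t)_{t=0}^T$ of \eqref{be} by the same computation that appears in the proof of \thref{thm:dpOC}, run here as a downward induction on $t$.

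The first task is to verify the hypotheses of \thref{thm:be} for $k(x,\omega):=h(x,\omega)-x\cdot p(\omega)$. On $\dom h$ the system equations hold scenariowise, so summing the lower bounds in condition~2 over $t$ and using $y_0=y_{T+1}=0$ together with the conventions $A_{T+1}=B_{T+1}=0$, $X_{T+1}=0$, the terms $-X_t\cdot\Delta y_{t+1}-(A_{t+1}X_t+B_{t+1}U_t+W_{t+1})\cdot y_{t+1}=X_t\cdot y_t-X_{t+1}\cdot y_{t+1}$ telescope to $X_0\cdot y_0-X_{T+1}\cdot y_{T+1}=0$; hence $h(x,\omega)\ge x\cdot p(\omega)-\sum_t m_t(\omega)$ on $\dom h$ and therefore everywhere, i.e.\ $k$ is lower bounded. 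Since $h^\infty$ is obtained by replacing each $L_t$ with $L_t^\infty$ and each constraint $\Delta X_s=A_sX_{s-1}+B_sU_{s-1}+W_s$ with its recession (dropping $W_s$), we get $k^\infty=h^\infty-x\cdot p$, so $\{x\in\N\mid k^\infty(x)\le0\ \text{a.s.}\}$ is exactly the set in condition~1 and is linear. Thus \thref{thm:be} gives a unique L-bounded solution $(h_t)_{t=0}^T$ of \eqref{be}, normal integrands $(\tilde h_t)_{t=0}^{T-1}$ realising the recursion, and linear-valued $N_t(\omega)=\{x_t\mid h_t^\infty(x^t,\omega)\le0,\ x^{t-1}=0\}$.

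Next I would show $(h_t)$ has the form \eqref{eq:beOC} for $(J_t,I_t)$ defined by \eqref{be_OC}. For $t=T$: every $L_s$ is L-bounded (apply \thref{lem:lbconv} to the affine minorant supplied by conditions~2--3), so $E_T$ distributes over the sum and fixes the $\F_T$-measurable constraint indicators in $h$, and since $I_{T+1}=0$ the identity $h_T=E_Th$ takes the form \eqref{eq:beOC}. For the step from $t+1$ to $t$: in $\tilde h_t=\inf_{x_{t+1}}h_{t+1}$ the constraint indicator at time $t+1$ pins $X_{t+1}=X_t+A_{t+1}X_t+B_{t+1}U_t+W_{t+1}$, turning the inner $\inf_{U_{t+1}}$ into $J_{t+1}$ evaluated at this state, namely into $I_{t+1}$; then $h_t=E_t\tilde h_t$ again takes the form \eqref{eq:beOC} by the tower property and \thref{thm:condoperations}. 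Along the induction one checks the operations in \eqref{be_OC} are legitimate: $I_{t+1}=J_{t+1}\circ(\text{affine Carath\'eodory map})$ is a normal integrand and is L-bounded because condition~3 makes $A_{t+1}^{*}y_{t+1}$, $B_{t+1}^{*}y_{t+1}$, $W_{t+1}\cdot y_{t+1}$ pair integrably with an affine minorant of $J_{t+1}$; hence $L_t+I_{t+1}$ is L-bounded and $E_t(L_t+I_{t+1})$ exists and is L-bounded by \thref{thm:existce}; and $J_t=\inf_{U_t}E_t(L_t+I_{t+1})(\cdot,U_t)$ is a normal integrand by \cite{rw98}, which is proper and L-bounded because $\{U_t\mid (E_t(L_t+I_{t+1}))^\infty(0,U_t,\omega)\le0\}$ is the $U_t$-slice of the linear-valued $N_t(\omega)$ from \thref{thm:be}, hence a linear subspace. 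Uniqueness of $(J_t,I_t)$ is inherited from that of $(h_t)$, and computing $h_t^\infty$ from \eqref{eq:beOC} and setting $x^{t-1}=0$ forces $X_t=0$ and leaves $(E_t(L_t+I_{t+1}))^\infty(0,U_t,\omega)$, so the $N_t(\omega)$ of \thref{thm:be} coincides with the one in the statement and is linear-valued.

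The main obstacle is the L-boundedness bookkeeping inside this induction: unlike in \thref{thm:dpOC}, where an L-bounded solution of \eqref{be_OC} is assumed, here each $J_t$ and $I_t$ must be proved L-bounded as \eqref{be_OC} is run. This is precisely where conditions~2 and~3 are used — to carry an affine lower bound with $L^1$ data through composition with the system map and through conditional expectation — and where the linearity output of \thref{thm:be}, itself fed by condition~1, is needed to keep the inf-projection defining $J_t$ from collapsing to $-\infty$. The remaining points (preservation of normality under composition and inf-projection, tracking which $\sigma$-algebra each term is measurable with respect to, and the interchange of $\inf$ and $E_t$ via \cite[Theorem~14.60]{rw98}) are routine.
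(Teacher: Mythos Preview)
Your proposal is correct and follows essentially the same route as the paper: verify the hypotheses of \thref{thm:be} by telescoping the bounds in condition~2, apply \thref{thm:be} to obtain the L-bounded solution $(h_t)$ and the linear-valued $N_t$, and then run a downward induction showing $h_t$ has the form \eqref{eq:beOC}, using the linearity of $N_t$ (via \thref{thm:ip}, which is the paper's packaging of the \cite{rw98} inf-projection result) to guarantee each $J_t$ is a normal integrand. The only point where the paper is more explicit than you is the bookkeeping you flag as the ``main obstacle'': it carries the specific bound $J_{t}(X_{t})\ge X_{t}\cdot E_{t}y_{t}-\sum_{t'>t}E_{t}m_{t'}$ as part of the induction hypothesis, which is exactly the affine minorant you allude to and is what makes the composition with the system map yield an L-bounded $I_t$ under condition~3.
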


\begin{proof}
Summing up the lower bounds in 2 shows that $k(x,\omega):=h(x,\omega)-x\cdot p(\omega)$ is lower bounded while 1 means that $\{x\in\N\mid k^\infty(x)\le 0\}$ is a linear space. By \thref{thm:be}, \eqref{be} has a unique solution $(h_t)_{t=0}^T$ for $h$ and
\[
N_t(\omega):=\{x_t\in\reals^{n_t} \mid h_t^\infty(x^t,\omega)\le 0,\ x^{t-1}=0\}
\]
is linear-valued for all $t$. 

Assume that $(I_{t'},J_{t'})_{t'=t+1}^{T+1}$ are normal integrands satisfying \eqref{be_OC} from time $t+1$ onwards, and that
\begin{equation*}\label{QqOC}
\begin{aligned}
  \tilde h_{t}(x^{t}) &= \sum_{s=0}^t(E_{t+1}L_s)(X_s,U_s) + I_{t+1}(X_t,U_t) \\
  &\qquad + \sum_{s=1}^{t}\delta_{\{0\}}(\Delta X_s-A_sX_{s-1}+B_sU_{s-1}+W_s),\\
J_{t+1}(X_{t+1}) & \ge X_{t+1}\cdot E_{t+1}y_{t+1} - \sum_{t'=t+1}^TE_{t+1}m_{t'}.
\end{aligned}
\end{equation*}
Here $E_{T+1}$ is defined as the identity mapping so the above hold for $t=T$. The lower bound gives
\begin{align}
I_{t+1}(X_t,U_t) &=J_{t+1}(X_t+A_{t+1}X_t+B_{t+1} U_t +W_{t+1})\nonumber\\
&\ge (X_t+A_{t+1} X_t+B_{t+1} U_t +W_{t+1})\cdot E_{t+1}y_{t+1} - \sum_{t'=t+1}^TE_{t+1}m_{t'}\nonumber\\
&= X_t\cdot E_{t+1}y_{t+1}+X_t\cdot A_{t+1}^*E_{t+1}y_{t+1}+U_t\cdot B_{t+1}^*E_{t+1}y_{t+1}\nonumber\\
&\qquad +W_{t+1}\cdot E_{t+1}y_{t+1} - \sum_{t'=t+1}^TE_{t+1}m_{t'}.\label{eq:Ilb}
\end{align}
Under conditions 2 and 3, all the terms in the expression of $\tilde h_t$ above are L-bounded, by \thref{thm:existce}. By \thref{thm:condoperations,thm:tower},
\begin{align*}
  h_t(x^t) &= (E_t\tilde h_t)(x^t)\\
  &= \sum_{s=0}^{t-1}(E_tL_s)(X_s,U_s) + E_t(L_t+I_{t+1})(X_t,U_t)\\
 &\quad+  \sum_{s=1}^t\delta_{\{0\}}(\Delta X_s-A_sX_{s-1}+B_sU_{s-1}+W_s).
\end{align*}
In particular,
\begin{align*}
 N_t(\omega) &=\{x_t\in\reals^{n_t}\mid h^\infty_t(x_t)\le 0,\ x^{t-1}=0\}\\
 &=\{(X_t,U_t)\in\reals^{n_t}\mid X_t=0,\ (E_t(L_t+I_{t+1}))^\infty(0,U_t)\le 0\},
\end{align*}
which is linear by \thref{thm:be}. Thus, by \thref{thm:ip}, the functions
\[
J_t(X_t):=\inf_{U_t}E_t(L_t+I_{t+1})(X_t,U_t)
\]
and
\[
I_t(X_{t-1},U_{t-1}):=J_t(X_{t-1}+A_{t} X_{t-1}+B_{t} U_{t-1} +W_t)
\]
are convex normal integrands. As in \eqref{eq:tildehOC},
\begin{align*}
\tilde h_{t-1}(x^{t-1}) &=\sum_{s=0}^{t-1}(E_tL_s)(X_s,U_s)+J_t(X_{t-1}+A_tX_{t-1}+B_tU_{t-1} +W_t)\\
&\quad+\sum_{s=1}^{t-1}\delta_{\{0\}}(-\Delta X_{s}+A_s X_{s-1}+B_s U_{s-1} +W_s)\\
&=\sum_{s=0}^{t-1}(E_tL_s)(X_{s},U_s)+I_t(X_{t-1},U_{t-1})\\
&\quad+\sum_{s=1}^{t-1}\delta_{\{0\}}(-\Delta X_{s}+A_s X_{s-1}+B_s U_{s-1} +W_s).
\end{align*}
By \thref{lem:cp1,ex:celq}, conditions 2 and 3 give
\begin{align*}
  (E_{t+1}L_t)(X_t,U_t) &\ge (X_{t},U_{t})\cdot E_{t+1}p_{t} - X_{t}\cdot E_{t+1}[\Delta y_{t+1}]\nonumber \\
  &\quad- X_{t}\cdot A_{t+1}^*E_{t+1}y_{t+1} -U_{t}\cdot B_{t+1}^*E_{t+1}y_{t+1}]\nonumber\\
    &\quad-W_{t+1}\cdot E_{t+1}y_{t+1}-E_{t+1}m_{t}.\label{eq:ELlb}
\end{align*}
Combining this with \eqref{eq:Ilb} gives
\begin{align*}
  (E_{t+1}L_t+I_{t+1})(X_t,U_t) &\ge (X_{t},U_{t})\cdot E_{t+1}p_{t} + X_t\cdot E_{t+1}y_t - \sum_{t'=t}^TE_{t+1}m_{t'}.
\end{align*}
By \thref{lem:cp1,ex:celq},
\[
E_t(L_t+I_{t+1})(X_t,U_t) \ge X_t\cdot E_ty_t - \sum_{t'=t}^TE_tm_{t'}.
\]
and thus,
\[
J_t(X_t)\ge X_t\cdot E_ty_t - \sum_{t'=t}^TE_tm_{t'}.
\]
The claim thus follows by induction on $t$.
\end{proof}

\begin{example}\thlabel{ex:OC2}
Assumptions of \thref{thm:dpOC,thm:dpOC2} hold if \eqref{oc} is feasible,
\begin{enumerate}
\item $\{(X,U)\in\N \mid \sum_{t=0}^T L^\infty_t(X_t,U_t)\le 0,\ \Delta X_{t}= A_t X_{t-1}+B_t U_{t-1}\}$ is  linear.
\item There exists $p\in\N^\perp$ and $\epsilon>0$ such that for every $\lambda\in(1+\epsilon,1-\epsilon)$ there exist $y\in L^1$ and $m\in L^1$ such that $A_t^*y_t, B_t^* y_t$, $y_t\cdot W_t$ are integrable and
\begin{align*}
  L_t(X_t,U_t) &\ge \lambda(X_t,U_t)\cdot p_t - X_t\cdot\Delta y_{t+1} - \Delta X_{t+1}\cdot y_{t+1}-m_t
\end{align*}
for all feasible $(X,U)$ and for all $t$. Here $X_{t+1}:=y_0:=y_{T+1}:=0$.
\end{enumerate}
\end{example}
\begin{proof}
The second assumption clearly implies the lower bound in both theorems. Summing up the lower bounds in 2, we see that \thref{2lambda} is applicable, which implies the rest of the assumptions.
\end{proof}

\begin{remark}\thlabel{rem:qfactor}
Denoting $Q_t:=E_t(L_t+I_{t+1})$, we can write \eqref{be_OC} as
\begin{align*}
  Q_{T+1}&:=0,\\
  I_{t}(X_{t-1},U_{t-1}) &:= \inf_{U_t\in\reals^M}Q_t(X_{t-1}+A_{t} X_{t-1}+B_{t} U_{t-1} +W_t,U_t),\\
  Q_{t-1} &:=E_{t-1}(L_{t-1}+I_t)
\end{align*}
and the optimality condition as
\[
U_t\in\argmin_{U_t\in\reals^M}Q_t(X_t,U_t).
\]
If $(Q_t)_{t=0}^T$ satisfies the above, then the functions
\[
J_t(X_t):=\inf_{U_t\in\reals^M}Q_t(X_t,U_t)
\]
satisfy \eqref{be_OC}. Formulation of the dynamic programming recursion in terms of the $Q$-functions is often used in the context of reinforcement learning e.g.\ in \cite{ber19}.
\end{remark}

\begin{example}[Conditional independence]\thlabel{ex:scIndep}
Assume that $L_t$ are L-bounded, that $(J_t)_{t=0}^\infty$ and $(I_t)_{t=0}^\infty$ are an $L$-bounded solution of \eqref{be_OC} and that there is a sequence of sigma-algebras $(\H_t)_{t=0}^T$ such that
\begin{enumerate}
\item
   $L_t$ is $\H_t$-conditionally independent of $\F_t$ and $\sigma(A_{t+1},B_{t+1},W_{t+1})$ is $\H_{t+1}\vee\H_{t}$-conditionally independent of $\F_t$.
\item
  $\H_{t+1}$ is $\H_t$-conditionally independent of $\F_t$.
\end{enumerate}
Then $J_t$ is $\H_t$-measurable and
\[
E_t(L_t+I_{t+1}) = E^{\H_t}(L_t+I_{t+1}).
\]
In particular, if $(\H_t)_{t=0}^{T+1}$ are mutually independent and  $A_{t}$, $B_{t}$ and $W_{t}$ are $\H_t$-measurable , then each $E_t I_{t+1}$ is deterministic. If, in addition, $L_t$ is independent of $\F_t$, then $J_t$ is deterministic.
\end{example}

\begin{proof}
Assume that $J_{t+1}$ is $\H_{t+1}$-measurable. Then
\[
I_{t+1}(X_{t},U_{t})=J_{t+1}(X_{t}+A_{t+1}X_t+B_{t+1}U_{t}+W_{t+1})
\]
is $\sigma(A_{t+1},B_{t+1},W_{t+1})\vee \H_{t+1}$-measurable. By \cite[Proposition 6.8]{kal2}, conditions 1 and 2 imply that $I_{t+1}$ is $\H_{t}$-conditionally independent of $\F_{t}$. By \thref{thm:cini},
\[
E_{t}(L_{t}+I_{t+1})= E^{\H_{t}} (L_{t}+I_{t+1}),
\]
so $J_{t}$ is $\H_{t}$ measurable, by \thref{thm:ip}. By \thref{thm:cini}, condition 1 implies that $J_T$ is $\H_T$-measurable, so the claim follows from induction.
 
Under the additional assumptions, $I_{t+1}$ is $\H_{t+1}$-measurable, so mutual independence of $\H_t$ gives, by \thref{thm:cini}, that
\[
E_{t}I_{t+1}= E^{\H_{t}} I_{t+1}
\]
is deterministic. When, in addition, $L_t$ is independent of $\F_t$,
\[
E_{t-1}(L_{t-1}+I_t)
\]
is deterministic as well, and so too is $J_t$.
\end{proof}

\begin{example}[Linear-quadratic control]\thlabel{ex:lqc}
Consider \eqref{oc} in the case
\[
L_t(X_t,U_t) = \frac{1}{2}X_t\cdot Q_tX_t+\frac{1}{2}U_t\cdot R_tU_t,
\]
where $Q_t\in L^1(\reals^{N\times N})$ and $R_t\in L^1(\reals^{M\times M})$ are random symmetric positive semi-definite matrices. Assume that the recursion
\begin{align*}
  K_T&:=E_T[Q_T],\\
  K_{t-1} & :=E_{t-1}[Q_{t-1}+(I+A_t)^*K_t(I+A_t)]\\
  &\quad -\frac{1}{2}E_{t-1}[(I+A_t)^*K_tB_t](E_{t-1}[R_{t-1} + B_t^*K_tB_t])^{-1}E_{t-1}[B_t^*K_t(I+A_t)]
\end{align*}
is well-defined, $W_t\cdot K_tA_t$ and $W_t\cdot K_tA_t$ are integrable, $W_t$ has zero mean and is independent of $\F_{t-1}$ and of $A_{t'}$, $B_{t'}$, $Q_{t'}$ and $R_{t'}$ for $t'\ge t$. Then
\[
J_t(X_t) = \frac{1}{2}X_t\cdot K_tX_t + \frac{1}{2}\sum_{t'=t+1}^TE_t[W_{t'}\cdot Q_{t'}W_{t'}]
\]
and the optimal control is given by $\bar U_t = - \Lambda_tX_t$, where
\[
\Lambda_t=(E_t[R_t + B_{t+1}^*K_{t+1}B_{t+1}])^{-1}E_t[B_{t+1}^*K_{t+1}(I+A_{t+1})].
\]

Note that if, in addition, $Q_t$, $R_t$, $A_{t+1}$ and $B_{t+1}$ are independent of $\F_t$, then the matrices $K_t$ and $\Lambda_t$ are deterministic with
\begin{align*}
  K_T&:=E[Q_T],\\
  K_{t-1} & :=E[Q_{t-1}+(I+A_t)^*K_t(I+A_t)]\\
  &\quad -\frac{1}{2}E[(I+A_t)^*K_tB_t](E[R_{t-1} + B_t^*K_tB_t])^{-1}E[B_t^*K_t(I+A_t)]
\end{align*}
and
\[
\Lambda_t = (E[R_t] + E[B_{t+1}]^*K_{t+1}E[B_{t+1}])^{-1}E[B_{t+1}]^*K_{t+1}E[I+A_{t+1}].
\]

The backward recursion for $(K_t)_{t=0}^T$ are known as Riccati equations. The matrices $K_t$ are well-defined e.g.\ when $Q_t$, $R_t$, $A_t$ and $B_t$ are all deterministic and $R_t$ are positive definite.
\end{example}

\begin{proof}
By \thref{ex:celq},
\[
E_tL_t(X_t,U_t)=\frac{1}{2}X_t\cdot E_t[Q_t]X_t+\frac{1}{2}U_t\cdot E_t[R_t]U_t,
\]
for all $t$, so
\[
J_T(X_T)=\frac{1}{2}X_T\cdot E_T[Q_T]X_T.
\]
Assume that the expression for $J_{t'}$ in the claim is valid for $t'=t,\ldots,T$. We get
\begin{align*}
  I_t(X_{t-1},U_{t-1}) &= J_t((I+A_t)X_{t-1}+B_tU_{t-1}+W_t)\\
  &=\frac{1}{2}X_{t-1}\cdot(I+A_t)^*K_t(I+A_t)X_{t-1} + \frac{1}{2}U_{t-1}\cdot B_t^*K_tB_tU_{t-1}\\
  &\quad + \frac{1}{2}W_t\cdot K_tW_t + X_{t-1}\cdot (I+A_t)^*K_tB_tU_{t-1}\\
  &\quad + W_t\cdot K_t(X_{t-1}+A_tX_{t-1}+B_tU_{t-1}) + \frac{1}{2}\sum_{t'=t+1}^TE_t[W_{t'}\cdot Q_{t'}W_{t'}].
\end{align*}
When $W_t$ has zero mean and is independent of $\F_{t-1}$, $Q_{t'}$, $R_{t'}$, $A_{t'}$ and $B_{t'}$ for $t'\ge t$ and $W_t$ is independent of $\F_{t-1}$, \thref{ex:celq} gives
\begin{align*}
  E_{t-1}(L_{t-1}+I_t)(X_{t-1},U_{t-1}) &=\frac{1}{2}X_{t-1}\cdot E_{t-1}[Q_{t-1}+(I+A_t)^*K_t(I+A_t)]X_{t-1}\\
  &\quad + \frac{1}{2}U_{t-1}\cdot E_{t-1}[R_{t-1} + B_t^*K_tB_t]U_{t-1}\\
  &\quad + X_{t-1}\cdot E_{t-1}[(I+A_t)^*K_tB_t]U_{t-1} + \frac{1}{2}\sum_{t'=t}^TE_{t-1}[W_{t'}\cdot Q_{t'}W_{t'}].
\end{align*}
Thus,
\begin{align*}
  J_{t-1}(X_{t-1}) &= \inf_{U_{t-1}\in\reals^M}E_{t-1}(L_{t-1}+I_t)(X_{t-1},U_{t-1})\\
  &= \frac{1}{2}X_{t-1}\cdot E_{t-1}[Q_{t-1}+(I+A_t)^*K_t(I+A_t)]X_{t-1}+ \frac{1}{2}E_{t-1}[W_t\cdot K_tW_t]\\
  &\quad + \inf_{U_{t-1}\in\reals^M}\{\frac{1}{2}U_{t-1}\cdot E_{t-1}[R_{t-1} + B_t^*K_tB_t]U_{t-1} + X_{t-1}\cdot E_{t-1}[(I+A_t)^*K_tB_t]U_{t-1}\}\\
  &=  \frac{1}{2}X_{t-1}\cdot K_{t-1}X_{t-1} + \frac{1}{2}\sum_{t'=t}^TE_t[W_{t'}\cdot Q_{t'}W_{t'}],
\end{align*}
where the infimum is attained by the $\bar U_t$ in the statement. The claim thus follows by induction on $t$.
\end{proof}

\subsection{Problems of Lagrange}\label{sec:lagrange2}

Consider the problem
\begin{equation}\label{lagrange}\tag{$L$}
\minimize\quad E\sum_{t=0}^T K_t(x_t,\Delta x_t)\quad\ovr x\in\N,
\end{equation}
where $x$ is a process of fixed dimension $d$, $K_t$ are convex normal integrands and $x_{-1}:=0$. This fits the general format with
\[
h(x,\omega) = \sum_{t=0}^T K_t(x_t,\Delta x_t,\omega).
\]
Problem \eqref{lagrange} is a special case of a stochastic problem of Bolza studied in \cite{rw83} whose objective contains an additional convex function of $(x_0,x_T)$. This section analyzes the dynamic programming principle for \eqref{lagrange}. The results below seem to be new.

If in the linear stochastic programming model \thref{ex:lp}, the constraints can be grouped as
\begin{equation}\label{eq:classicLP}
T_t\Delta x_t+W_tx_t - b_t\in C_t\quad t=0,\ldots,T
\end{equation}
for given random matrices $T_t$ and $W_t$ and cones $C_t$, then the problem in \thref{ex:lp} is an instance of \eqref{lagrange} with
\[
K_t(\Delta x_t,x_t,\omega) =
\begin{cases}
  c_t(\omega)\cdot x_t & \text{if $T_t(\omega)\Delta x_t+W_t(\omega)x_t - b_t(\omega)\in C_t(\omega)$},\\
  +\infty & \text{otherwise}.
\end{cases}
\]
Note also that the problem of optimal control \eqref{oc} is a special case of \eqref{lagrange} with $x_t=(X_t,U_t)$ and
\[
K_t(x_t,\Delta x_t) =
\begin{cases}
  L_t(X_t,U_t) & \text{if $\Delta X_t = A_tX_{t-1}+B_tU_{t-1}+W_t$},\\
  +\infty & \text{otherwise}.
\end{cases}
\]
Other examples can be found e.g.\ in financial mathematics; see \cite[Example~3.6]{bpp18}.

Much like in the optimal control problem \eqref{oc}, the time-separable structure here allows us to express the solutions of the generalized Bellman equations \eqref{be} in terms of normal integrands $V_t$ and $\tilde V_t$ on $\reals^d\times\Omega$ that solve the following dynamic programming equations
\begin{equation}\label{be_V}
\begin{split}
V_T &= 0,\\
\tilde V_{t-1}(x_{t-1},\omega) &= \inf_{x_t\in\reals^d}\{(E_t K_t)(x_{t},\Delta x_t,\omega) + V_t(x_t,\omega)\},\\
V_{t-1} &= E_{t-1}\tilde V_{t-1}.
\end{split}
\end{equation}
Note that $V_t$ is a function only of $x_t$ and $\omega$ while the functions $h_t$ in the general Bellman equations \eqref{be} may depend on the whole path of $x$ up to time~$t$.

\thref{thm:belb} gives the following existence result for \eqref{lagrange}.

\begin{theorem}\thlabel{thm:dpL0}
Assume that \eqref{lagrange} is feasible, $K_t$ are lower bounded and $(V_t)_{t=0}^T$ is a solution of \eqref{be_V}. Then 
\begin{align*}
\inf\eqref{lagrange} &=\inf_{x^t\in\N^t}E\left[\sum_{s=0}^t (E_t K_s)(x_{s},\Delta x_s) + V_t(x_t)\right]\quad t=0,\ldots,T,
\end{align*}
for all $t=0,\dots,T$ and, moreover, an $\bar x\in\N$ solves \eqref{lagrange} if and only if
\[
x_t\in\argmin_{x_t\in\reals^d}\{(E_t K_t)(x_{t},\Delta x_t) + V_t(x_t)\}\quad \text{a.s.}
\]
for all $t=0,\ldots,T$. If
\[
N_t(\omega)=\{x_t\in\reals^d\mid (E_t K_t)^\infty(x_t,x_t,\omega)+V^\infty_t(x_t,\omega) \le 0\}
\]
is linear-valued for all $t=0,\ldots,T$, then there exists an optimal $x\in\N$ with $x_t\perp N_t$ almost surely.
\end{theorem}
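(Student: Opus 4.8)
The plan is to reduce the statement to \thref{thm:dp0} by exhibiting the solution of the generalized Bellman equations \eqref{be} explicitly in terms of the $V_t$. First note that $h$ is lower bounded: if $K_t\ge m_t$ with $m_t\in L^1$, then $h(x,\omega)=\sum_{t=0}^T K_t(x_t,\Delta x_t,\omega)\ge\sum_{t=0}^T m_t(\omega)$, which is integrable. I claim that the normal integrands
\[
h_t(x^t,\omega):=\sum_{s=0}^{t}(E_tK_s)(x_s,\Delta x_s,\omega)+V_t(x_t,\omega),\qquad \tilde h_t(x^t,\omega):=\sum_{s=0}^{t}(E_{t+1}K_s)(x_s,\Delta x_s,\omega)+\tilde V_t(x_t,\omega)
\]
solve \eqref{be}. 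Here $E_tK_s$ is the conditional expectation of the (lower bounded, hence by \thref{thm:existce} conditionally integrable) normal integrand $(x_{s-1},x_s)\mapsto K_s(x_s,x_s-x_{s-1},\omega)$; that composition with the deterministic linear map $(x_{s-1},x_s)\mapsto(x_s,x_s-x_{s-1})$ commutes with $E_t$ follows from the characterization in \thref{lem:citest}, since both sides agree when tested against $x\in L^\infty(\F_t)$. One also checks, by downward induction using \eqref{be_V}, that each $V_t$ and $\tilde V_t$ is lower bounded (the base case is $V_T=0$, and the step uses that $K_{t+1}$ and hence $E_{t+1}K_{t+1}$ are bounded below by integrable functions, so the infimum defining $\tilde V_t$ is); consequently all the integrands above are lower bounded and the conditional-expectation manipulations below are legitimate.

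The verification of \eqref{be} is a backward induction. For $t=T$, the additivity in \thref{thm:condoperations} gives $E_Th=\sum_{s=0}^T(E_TK_s)(x_s,\Delta x_s)=h_T$ since $V_T=0$. For the inductive step, pulling the terms with $s<t+1$ out of the infimum (they do not depend on $x_{t+1}$) and invoking \eqref{be_V} yields $\tilde h_t(x^t,\omega)=\inf_{x_{t+1}}h_{t+1}(x^t,x_{t+1},\omega)=\sum_{s=0}^{t}(E_{t+1}K_s)(x_s,\Delta x_s,\omega)+\tilde V_t(x_t,\omega)$; then the additivity in \thref{thm:condoperations}, the tower property \thref{thm:tower} (which gives $E_t(E_{t+1}K_s)=E_tK_s$ for $s\le t$), and $V_t=E_t\tilde V_t$ from \eqref{be_V} combine to give $E_t\tilde h_t=h_t$. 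Thus $(h_t)$ solves \eqref{be}.

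With $(h_t)$ in hand, \thref{thm:dp0} applies directly, since $h$ is lower bounded, \eqref{lagrange} is feasible, and \eqref{be} has the solution just constructed. It yields $\inf\eqref{lagrange}=\inf_{x^t\in\N^t}Eh_t(x^t)$, which is the first claim after substituting the formula for $h_t$; and it yields that $\bar x\in\N$ is optimal iff $\bar x_t\in\argmin_{x_t}h_t(\bar x^{t-1},x_t)$ a.s., and since the terms of $h_t(\bar x^{t-1},\cdot)$ with index $s<t$ are constant in $x_t$, this argmin is exactly $\argmin_{x_t}\{(E_tK_t)(x_t,\Delta x_t)+V_t(x_t)\}$, the stated optimality condition. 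Finally, to identify the set $N_t$ of \thref{thm:dp0} with the one in the statement I would use the difference-quotient formula for $h_t^\infty$ at a direction of the form $(0,\dots,0,x_t)$ together with the explicit form of $h_t$: the summands with $s<t$ are independent of $x_t$ and contribute $0$, the summand $s=t$ contributes $(E_tK_t)^\infty(x_t,x_t,\omega)$, and $V_t$ contributes $V_t^\infty(x_t,\omega)$, so $h_t^\infty(x^t,\omega)=(E_tK_t)^\infty(x_t,x_t,\omega)+V_t^\infty(x_t,\omega)$ when $x^{t-1}=0$; the last assertion of \thref{thm:dp0} then finishes the proof.

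The main obstacle I anticipate is bookkeeping rather than conceptual: carefully justifying the interchanges of $E_t$ with the finite sums, with composition by the deterministic linear maps $(x_{s-1},x_s)\mapsto(x_s,\Delta x_s)$, and with the partial infimum over $x_{t+1}$, while checking throughout that the integrands in play are lower bounded so that \thref{thm:condoperations} and \thref{thm:tower} apply. The one genuinely new computation is the recession-function identity for $N_t$, which is cleanest via the difference-quotient characterization of $h_t^\infty$ rather than via recession calculus on sums of convex functions.
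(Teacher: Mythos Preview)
Your proposal is correct and follows essentially the same route as the paper. The paper actually derives \thref{thm:dpL0} as the special case $p=0$, $y=0$ of the more general \thref{thm:dpL}, but the proof of the latter is exactly your argument: one sets $h_t(x^t,\omega):=\sum_{s=0}^{t}(E_tK_s)(x_s,\Delta x_s,\omega)+V_t(x_t,\omega)$, verifies by backward induction that $(h_t)$ solves \eqref{be} using \thref{thm:condoperations} and the tower property, and then invokes the general dynamic programming theorem (\thref{thm:dpLb}, which reduces to \thref{thm:dp0} when $p=0$). Your explicit identification of $N_t$ via the difference-quotient formula for $h_t^\infty$ in the direction $(0,\ldots,0,x_t)$ is a detail the paper leaves implicit in the phrase ``all the claims follow from \thref{thm:dpLb}''.
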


Rather than proving \thref{thm:dpL0} directly, we will prove the following more general result the proof of which is based on \thref{thm:dpLb}. In addition to the ``shadow price of information'' $p\in\N^\perp$, the assumptions in this section involve other ``dual variables'' $y_t\in L^1(\reals^d)$. Throughout, we set $y_{T+1}:=0$.

\begin{theorem}\thlabel{thm:dpL}
Assume that \eqref{lagrange} is feasible, $(V_t)_{t=0}^T$ is an $L$-bounded solution of \eqref{be_V} and that there exists $p\in\N^\perp$, $y \in L^1$ and $m_t\in L^1$ such that
\begin{enumerate}
\item $E\sum K_t(x_t,\Delta x_t)=E\sum[K_t(x_t,\Delta x_t)-x_t\cdot p_t]$ for all $x\in\N$,
\item $K_t(x_{t},\Delta x_t) \ge x_{t}\cdot(p_{t}+\Delta y_{t+1}) + \Delta x_t\cdot y_t - m_t$ almost surely for all $t$.
\end{enumerate}
Then 
\begin{align*}
\inf\eqref{lagrange} &=\inf_{x^t\in\N^t}E\left[\sum_{s=0}^t (E_t K_s)(x_{s},\Delta x_s) + V_t(x_t)\right]\quad t=0,\ldots,T,
\end{align*}
for all $t=0,\dots,T$ and, moreover, an $\bar x\in\N$ solves \eqref{lagrange} if and only if
\[
x_t\in\argmin_{x_t\in\reals^d}\{(E_t K_t)(x_{t},\Delta x_t) + V_t(x_t)\}\quad \text{a.s.}
\]
for all $t=0,\ldots,T$. If
\[
N_t(\omega)=\{x_t\in\reals^d\mid (E_t K_t)^\infty(x_t,x_t,\omega)+V^\infty_t(x_t,\omega) \le 0\}
\]
is linear-valued for all $t=0,\ldots,T$, then there exists an optimal $x\in\N$ with $x_t\perp N_t$ almost surely.
\end{theorem}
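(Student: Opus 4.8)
The plan is to deduce the statement from \thref{thm:dpLb} applied to $h(x,\omega)=\sum_{t=0}^T K_t(x_t,\Delta x_t,\omega)$, so that the work reduces to checking the three hypotheses of that theorem: (i) $k(x,\omega):=h(x,\omega)-x\cdot p(\omega)$ is lower bounded, (ii) $Ek=Eh$ on $\N$, and (iii) \eqref{be} admits a solution $(h_t)_{t=0}^T$. Item (ii) is condition~1 verbatim. For item (i) I would sum the pointwise bounds of condition~2 over $t$; writing $\Delta y_{t+1}=y_{t+1}-y_t$ and $\Delta x_t=x_t-x_{t-1}$, the cross terms $\sum_t\bigl(x_t\cdot\Delta y_{t+1}+\Delta x_t\cdot y_t\bigr)$ telescope to $x_T\cdot y_{T+1}-x_{-1}\cdot y_0=0$ (since $y_{T+1}=0$ and $x_{-1}=0$), leaving $h(x,\omega)\ge x\cdot p(\omega)-\sum_t m_t(\omega)$, i.e.\ $k\ge-\sum_t m_t\in L^1$.

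The heart of the argument is item (iii). The candidate is
\[
h_t(x^t,\omega):=\sum_{s=0}^t(E_tK_s)(x_s,\Delta x_s,\omega)+V_t(x_t,\omega).
\]
By condition~2 each $K_s$ has a linear lower bound, hence is L-bounded, so $E_tK_s$ exists and is L-bounded by \thref{thm:existce}; precomposition with the deterministic linear maps $x^t\mapsto(x_s,\Delta x_s)$ and $x^t\mapsto x_t$ preserves normality and L-boundedness, and finite sums of L-bounded normal integrands are L-bounded by \thref{thm:condoperations}, so each $h_t$ is an L-bounded normal integrand (and $\F_t$-measurable, since the $V_t$ are by \eqref{be_V}). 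That $(h_t)$ solves \eqref{be} is a backward induction on $t$. For $t=T$: $V_T=0$, and additivity of conditional expectation together with the identity $E_T(K_s\circ A)=(E_TK_s)\circ A$ for deterministic linear $A$ (checked on $L^\infty(\F_T)$ via \thref{lem:citest}) give $h_T=E_Th$. In the inductive step, the summands with $s\le t-1$ do not involve $x_t$, so pulling them out of the infimum and recognising the remaining infimum as the defining expression in \eqref{be_V} gives $\tilde h_{t-1}(x^{t-1},\omega)=\sum_{s=0}^{t-1}(E_tK_s)(x_s,\Delta x_s,\omega)+\tilde V_{t-1}(x_{t-1},\omega)$; applying $E_{t-1}$ and using additivity, the tower property \thref{thm:tower} (to collapse $E_{t-1}E_tK_s$ to $E_{t-1}K_s$), and $E_{t-1}\tilde V_{t-1}=V_{t-1}$ from \eqref{be_V}, reproduces the formula for $h_{t-1}$.

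With (i)--(iii) in hand, \thref{thm:dpLb} yields its three conclusions, and each translates directly into an assertion of the theorem: $Eh_t(x^t)$ is exactly the objective in the claimed $t$-stage reformulation, so the value identity follows; the summands with $s<t$ in $h_t(\bar x^{t-1},\cdot)$ are independent of $x_t$, so $\bar x_t\in\argmin_{x_t}h_t(\bar x^{t-1},x_t)$ reduces to $\bar x_t\in\argmin_{x_t}\{(E_tK_t)(x_t,\Delta x_t)+V_t(x_t)\}$; and evaluating the recession function of $h_t$ at directions supported on the last coordinate makes the $s<t$ terms constant (hence inert) and turns $\Delta x_t$ into $x_t$, so the abstract $N_t$ of \thref{thm:dpLb} coincides with the set in the statement, giving the final claim. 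I expect the main obstacle to be the bookkeeping in the inductive step of (iii)---in particular, verifying that each conditional-expectation manipulation is licensed by L-boundedness and that $E_t$ commutes with precomposition by the linear maps involved---together with the analogous care needed with the recession-function calculus ($(f+g)^\infty=f^\infty+g^\infty$ and the composition rule) used to identify $N_t$.
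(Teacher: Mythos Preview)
Your proposal is correct and follows essentially the same approach as the paper: reduce to \thref{thm:dpLb} by verifying lower boundedness of $k$ via the telescoping sum from condition~2, take $Ek=Eh$ from condition~1, and show by backward induction that $h_t(x^t,\omega)=\sum_{s=0}^t(E_tK_s)(x_s,\Delta x_s,\omega)+V_t(x_t,\omega)$ solves \eqref{be}, invoking \thref{thm:condoperations} and \thref{thm:tower} for the conditional-expectation step. Your treatment is in fact more explicit than the paper's on the telescoping, on the commutation of $E_t$ with linear precomposition, and on the recession calculus identifying $N_t$.
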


\begin{proof}
Summing up the lower bounds in 2 shows that $k(x,\omega):=h(x,\omega)-x\cdot p(\omega)$ is lower bounded while 1 means that $Ek=Eh$ on $\N$. Assume that $(V_t)_{t=0}^T$ is an L-bounded solution of \eqref{be_V}. All the claims follow from \thref{thm:dpLb} once we show that $(h_t)_{t=0}^T$ given by
 \begin{equation*}
h_t(x^t,\omega) := \sum_{s=0}^t(E_tK_s)(x_{s},\Delta x_s,\omega) + V_t(x_t,\omega)
\end{equation*}
is an $L$-bounded solution of \eqref{be}. Assume that $(h_{t'})_{t'=t}^T$ satisfies \eqref{be} from time $t$ onwards. We get
\begin{align*}
\tilde h_{t-1}(x^{t-1},\omega) &:= \inf_{x_t\in\reals^d} h_t(x^{t-1},x_t,\omega)\\
&= \sum_{s=0}^{t-1}(E_tK_s)(x_{s},\Delta x_s,\omega) + \inf_{x_t\in\reals^d}\{(E_tK_t)(x_{t},\Delta x_t,\omega) + V_t(x_t,\omega)\}\\
&= \sum_{s=0}^{t-1}(E_tK_s)(x_{s},\Delta x_s,\omega) + \tilde V_{t-1}(x_{t-1}).
\end{align*}
By \thref{thm:condoperations} again,
\[
h_{t-1}(x^t,\omega) =E_{t-1} \tilde h_{t-1}.
\]
Since $V_T=0$, we have $h_T=E_Th$, by \thref{thm:condoperations}. Thus, by induction, $(h_t)_{t=0}^T$ solves \eqref{be}.
\end{proof}

The following gives sufficient conditions for the existence of solutions to the Bellman equations \eqref{be_V}.

\begin{theorem}\thlabel{thm:dpL20}
Assume that $K_t$ are lower bounded and that 
\[
\{x\in\N\mid \sum_{t=0}^T (K^\infty_t(x_t,\Delta x_t)-x_t\cdot p_t)\le 0\}
\]
is a linear space. Then \eqref{be_V} has a unique solution $(V_t)_{t=0}^T$, each $V_t$ is lower bounded and
\begin{equation}\label{eq:Lagrec}
N_t(\omega):=\{x_t\in\reals^d \mid (E_tK_t)^\infty(x_t,x_t,\omega) + V_t^\infty(x_t,\omega)\le 0\}
\end{equation}
is linear-valued for all $t$.
\end{theorem}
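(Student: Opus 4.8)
The plan is to mirror the proof of \thref{thm:dpOC2}: apply \thref{thm:be} to the normal integrand $h(x,\omega)=\sum_{t=0}^T K_t(x_t,\Delta x_t,\omega)$ to obtain the unique solution $(h_t)_{t=0}^T$ of the generalized Bellman equations \eqref{be}, and then identify its components with the structured integrands built from $(V_t)$.

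First I would verify the hypotheses of \thref{thm:be} with $p:=0$, so that $k:=h-x\cdot p=h$. Each map $(x^t,\omega)\mapsto K_t(x_t,\Delta x_t,\omega)$ is a convex normal integrand, being the composition of $K_t$ with a deterministic linear map, so $h$ is one as a finite sum; summing lower bounds $K_t\ge m_t\in L^1$ gives $h\ge\sum_t m_t\in L^1$, so $k=h$ is lower bounded. Since recession functions distribute over finite sums of proper convex functions and over the linear substitution, $h^\infty(x,\omega)=\sum_{t=0}^T K_t^\infty(x_t,\Delta x_t,\omega)$, so the linearity hypothesis says exactly that $\{x\in\N\mid h^\infty(x)\le 0\}$ is a linear space. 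Hence \thref{thm:be} yields a unique solution $(h_t)_{t=0}^T$ of \eqref{be} consisting of lower bounded normal integrands (lower bounded rather than merely $L$-bounded because $k=h$ is; see \thref{lem:hkbe}) with the maps $N_t(\omega)=\{x_t\mid h_t^\infty(x^t,\omega)\le0,\ x^{t-1}=0\}$ linear-valued.

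Next I would prove, by backward induction on $t$, that $(V_t)_{t=0}^T$ given by \eqref{be_V} is a well-defined sequence of lower bounded normal integrands with
\[
h_t(x^t,\omega)=\sum_{s=0}^t (E_tK_s)(x_s,\Delta x_s,\omega)+V_t(x_t,\omega).
\]
For $t=T$, $V_T=0$ is lower bounded and, since $(E_TK_s)(x_s,\Delta x_s)=E_T[K_s(x_s,\Delta x_s)]$ for $x^T\in L^\infty(\F_T)$, additivity of the conditional expectation together with \thref{lem:citest} gives $h_T=E_Th=\sum_{s=0}^T(E_TK_s)(x_s,\Delta x_s)$. Assume the displayed identity at $t$. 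As $(E_tK_s)^\infty(0,0,\omega)=0$ for $s<t$, the structured form of $h_t$ gives $N_t(\omega)=\{x_t\mid (E_tK_t)^\infty(x_t,x_t,\omega)+V_t^\infty(x_t,\omega)\le 0\}$, which is the set in \eqref{eq:Lagrec} and is linear-valued by the previous step; hence \thref{thm:ip} shows $\tilde V_{t-1}$ is a normal integrand and the infimum defining it is attained. Consequently $\tilde h_{t-1}(x^{t-1})=\inf_{x_t}h_t(x^{t-1},x_t)=\sum_{s=0}^{t-1}(E_tK_s)(x_s,\Delta x_s)+\tilde V_{t-1}(x_{t-1})$, since only the $s=t$ summand and $V_t(x_t)$ involve $x_t$ and they combine into $\tilde V_{t-1}$ via $\Delta x_t=x_t-x_{t-1}$; applying $E_{t-1}$ and using additivity (\thref{thm:condoperations}) and the tower property (\thref{thm:tower}) on the terms with $s\le t-1$ yields $h_{t-1}(x^{t-1})=\sum_{s=0}^{t-1}(E_{t-1}K_s)(x_s,\Delta x_s)+V_{t-1}(x_{t-1})$ with $V_{t-1}=E_{t-1}\tilde V_{t-1}$. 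Lower boundedness propagates: the conditional expectation of a lower bounded integrand is lower bounded (by \thref{thm:existce} and \thref{lem:cp1}), so $E_tK_t\ge\mu_t\in L^1$; together with $V_t\ge\ell_t\in L^1$ this gives $\tilde V_{t-1}\ge\mu_t+\ell_t$ and hence $V_{t-1}\ge E_{t-1}[\mu_t+\ell_t]\in L^1$. This closes the induction and shows that $(V_t)_{t=0}^T$ solves \eqref{be_V}, that each $V_t$ is lower bounded, and that \eqref{eq:Lagrec} is linear-valued. Uniqueness is immediate from \eqref{be_V}: $V_T=0$ is forced, and given $V_t$ the integrand $\tilde V_{t-1}$ is determined and $V_{t-1}=E_{t-1}\tilde V_{t-1}$ is almost surely unique by \cite[Lemma~6]{thi81}.

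I expect the main obstacle to be the recession-function bookkeeping: justifying the identity $h^\infty(x,\omega)=\sum_t K_t^\infty(x_t,\Delta x_t,\omega)$ and the analogous reduction of $h_t^\infty$ on $\{x^{t-1}=0\}$, where the properness needed for the sum rule may force one to address the scenariowise feasibility of \eqref{lagrange}, and checking at each inductive step that the recession cone fed into \thref{thm:ip} is precisely the linear-valued $N_t$ produced by \thref{thm:be}.
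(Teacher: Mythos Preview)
Your proposal is correct and follows essentially the same route as the paper. The paper does not prove \thref{thm:dpL20} separately but instead proves the more general \thref{thm:dpL2} (with the $L$-bounded data $p,y$) and obtains \thref{thm:dpL20} as the special case $p=0$, $y=0$; your argument is precisely the specialization of that proof, invoking \thref{thm:be} (equivalently \thref{thm:belb} since $p=0$) to get the unique lower bounded $(h_t)$, then running the same backward induction via \thref{thm:ip}, \thref{thm:condoperations} and \thref{thm:tower} to identify $h_t=\sum_{s\le t}(E_tK_s)(x_s,\Delta x_s)+V_t(x_t)$ and deduce the linearity of \eqref{eq:Lagrec} from that of the sets in \thref{thm:be}. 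The recession-function bookkeeping you flag is handled at the same level of detail in the paper.
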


Rather than proving \thref{thm:dpL20} directly, we will prove the following more general result the proof of which is based on \thref{thm:be}.

\begin{theorem}\thlabel{thm:dpL2}
Assume that there exists $p\in\N^\perp$, $y \in L^1$ and $m_t\in L^1$ such that
\begin{enumerate}
\item $\{x\in\N\mid \sum_{t=0}^T (K^\infty_t(x_t,\Delta x_t)-x_t\cdot p_t)\le 0\}$ is a linear space,
\item $K_t(x_{t},\Delta x_t) \ge x_{t}\cdot(p_{t}+\Delta y_{t+1}) + \Delta x_t\cdot y_t - m_t$ almost surely for all $t$.
\end{enumerate}
Then \eqref{be_V} has a unique solution $(V_t)_{t=0}^T$, each $V_t$ is L-bounded and
\begin{equation}\label{eq:Lagrec}
N_t(\omega):=\{x_t\in\reals^d \mid (E_tK_t)^\infty(x_t,x_t,\omega) + V_t^\infty(x_t,\omega)\le 0\}
\end{equation}
is linear-valued for all $t$.
\end{theorem}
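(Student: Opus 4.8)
The plan is to follow the pattern of the proof of \thref{thm:dpOC2}, reducing \eqref{be_V} to the general Bellman equations \eqref{be} for $h(x,\omega):=\sum_{t=0}^T K_t(x_t,\Delta x_t,\omega)$ and then peeling off the path-independent part stage by stage. First I would note that, viewed through the linear substitution $x\mapsto(x_t,\Delta x_t)$, each $K_t$ is an $L$-bounded normal integrand (both on $\reals^{2d}$ and, composed, on $\reals^n$), since condition~2 bounds it below by an affine function with $L^1$ coefficients. Summing the bounds in condition~2 over $t$, the cross terms $\sum_{t=0}^T\bigl(x_t\cdot\Delta y_{t+1}+\Delta x_t\cdot y_t\bigr)$ telescope to $x_T\cdot y_{T+1}-x_{-1}\cdot y_0=0$ (using $x_{-1}=0$ and $y_{T+1}=0$), so $h(x)\ge x\cdot p-\sum_t m_t$ and hence $k:=h-x\cdot p$ is lower bounded. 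Since the recession function of a finite sum of proper convex integrands is the sum of the recession functions and that of a linear functional is itself, $k^\infty(x)=\sum_{t=0}^T\bigl(K_t^\infty(x_t,\Delta x_t)-x_t\cdot p_t\bigr)$, so condition~1 says $\{x\in\N\mid k^\infty(x)\le0\}$ is a linear space. Thus \thref{thm:be} applies: \eqref{be} for $h$ has a unique $L$-bounded solution $(h_t)_{t=0}^T$ and $N_t(\omega):=\{x_t\in\reals^d\mid h_t^\infty(x^t,\omega)\le0,\ x^{t-1}=0\}$ is linear-valued for all $t$.

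Next I would show, by backward induction on $t$, that
\[
h_t(x^t,\omega)=\sum_{s=0}^t(E_tK_s)(x_s,\Delta x_s,\omega)+V_t(x_t,\omega),
\]
where $(V_t)_{t=0}^T$ is $L$-bounded, solves \eqref{be_V}, and obeys $V_t(x_t,\omega)\ge -x_t\cdot(E_ty_{t+1})(\omega)-\sum_{t'=t+1}^T(E_tm_{t'})(\omega)$. For $t=T$ one takes $V_T:=0$ and uses additivity of conditional expectation on $L$-bounded integrands (\thref{thm:condoperations}) together with the test-function characterisation \thref{lem:citest} (for the linear substitution) to get $h_T=E_Th=\sum_s(E_TK_s)(x_s,\Delta x_s)$. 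For the step, the representation at time $t$ gives
\[
\tilde h_{t-1}(x^{t-1})=\inf_{x_t\in\reals^d}h_t(x^{t-1},x_t)=\sum_{s=0}^{t-1}(E_tK_s)(x_s,\Delta x_s)+\inf_{x_t\in\reals^d}\bigl\{(E_tK_t)(x_t,\Delta x_t)+V_t(x_t)\bigr\}.
\]
Restricting $h_t^\infty$ to $x^{t-1}=0$ (so $\Delta x_t=x_t$) shows that the recession set controlling the inner infimum is precisely the linear-valued $N_t$ above, which is also the set in \eqref{eq:Lagrec}; hence \thref{thm:ip} makes the inner infimum a convex normal integrand $\tilde V_{t-1}$ and $\tilde h_{t-1}=\sum_{s\le t-1}(E_tK_s)(x_s,\Delta x_s)+\tilde V_{t-1}$. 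Combining condition~2, the identity $E_tp_t=0$, \thref{lem:cp1}, \thref{ex:celq} and the inductive bound on $V_t$, the $x_t$-coefficient inside the inner infimum cancels after conditioning, which simultaneously shows $\tilde V_{t-1}$ is $L$-bounded and gives $\tilde V_{t-1}(x_{t-1})\ge -x_{t-1}\cdot E_ty_t-\sum_{t'=t}^T E_tm_{t'}$. Finally $h_{t-1}=E_{t-1}\tilde h_{t-1}$, and additivity plus the tower property \thref{thm:tower} (so $E_{t-1}(E_tK_s)=E_{t-1}K_s$ for $s\le t-1$) give $h_{t-1}=\sum_{s\le t-1}(E_{t-1}K_s)(x_s,\Delta x_s)+V_{t-1}(x_{t-1})$ with $V_{t-1}:=E_{t-1}\tilde V_{t-1}$; the tower property also turns the bound on $\tilde V_{t-1}$ into the claimed bound on $V_{t-1}$. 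This closes the induction. Uniqueness of $(V_t)$ then follows from uniqueness of $(h_t)$ in \thref{thm:be}, $L$-boundedness of each $V_t$ has been established along the way, and linear-valuedness of the set in \eqref{eq:Lagrec} is exactly the linear-valuedness of $N_t$ from \thref{thm:be}.

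I expect the main obstacle to be the $L$-bounded bookkeeping: choosing and propagating the auxiliary lower bound on $V_t$ so that every partial infimum and every conditional expectation in the recursion stays inside the class of $L$-bounded normal integrands to which \thref{thm:existce}, \thref{thm:condoperations}, \thref{thm:tower} and \thref{thm:ip} apply. The delicate point is verifying that, after taking $E_t$, the linear $x_t$-term in $(E_tK_t)(x_t,\Delta x_t)+V_t(x_t)$ cancels, which is where $p\in\N^\perp$ (equivalently $E_tp_t=0$) and the precise form of the bound in condition~2 enter; a secondary point is confirming that the recession-cone set governing the inner infimum in \eqref{be_V} really is the linear-valued $N_t$ delivered by \thref{thm:be}, so that \thref{thm:ip} is legitimately applicable.
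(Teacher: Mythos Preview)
Your proposal is correct and follows essentially the same route as the paper's proof: reduce to \thref{thm:be} via the telescoping sum of the bounds in condition~2, then run a backward induction establishing the representation $h_t(x^t)=\sum_{s\le t}(E_tK_s)(x_s,\Delta x_s)+V_t(x_t)$ together with the auxiliary lower bound $V_t(x_t)\ge -x_t\cdot E_ty_{t+1}-E_t\sum_{t'>t}m_{t'}$, invoking \thref{thm:ip}, \thref{thm:condoperations}, \thref{thm:tower} and \thref{lem:cp1}/\thref{ex:celq} exactly as you describe. The paper leaves the use of $E_tp_t=0$ implicit in passing from condition~2 to the bound on $(E_tK_t)$, so your explicit flagging of that point is, if anything, an improvement in clarity.
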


\begin{proof}
Summing up the lower bounds in 2 shows that $k(x,\omega):=h(x,\omega)-x\cdot p(\omega)$ is lower bounded while 1 means that $\{x\in\N\mid k^\infty(x)\le 0\}$ is a linear space. By \thref{thm:be}, \eqref{be} has a unique solution $(h_t)_{t=0}^T$ for $h$ and
\begin{equation}\label{eq:Lagrec0}
N_t(\omega):=\{x_t\in\reals^{n_t} \mid h_t^\infty(x^t,\omega)\le 0,\ x^{t-1}=0\}
\end{equation}
is linear-valued for all $t$. 

Assume that $(V_{t'})_{t'=t}^T$ are normal integrands satisfying \eqref{be_V} from time $t$ onwards, and that
\begin{equation}\label{Qqa}
\begin{split}
h_t(x^t) &= \sum_{s=0}^t(E_tK_s)(x_{s},\Delta x_s) + V_t(x_t)\\
V_t(x_t) &\ge - x_t\cdot (E_ty_{t+1}) - E_t\sum_{t'=t+1}^Tm_{t'}
\end{split}
\end{equation}
almost surely. We get
\begin{align}
\tilde h_{t-1}(x^{t-1}) &= \inf_{x_t\in\reals^d} h_t(x^{t-1},x_t)\nonumber\\
&= \sum_{s=0}^{t-1}(E_{t}K_s)(x_{s},\Delta x_s) + \tilde V_{t-1}(x_{t-1}),\label{eq:tildehL}
\end{align}
where
\[
\tilde V_{t-1}(x_{t-1}) := \inf_{x_{t}\in\reals^d}\{(E_tK_t)(x_t,\Delta x_t) + V_t(x_t)\}.
\]
The equation in \eqref{Qqa} and the linearity of \eqref{eq:Lagrec0} imply that \eqref{eq:Lagrec} is linear. Thus, by \thref{thm:ip}, $\tilde V_{t-1}$ is a normal integrand. 

By \thref{lem:cp1,ex:celq}, condition 2 implies 
\[
(E_tK_t) (x_t,\Delta x_t)\ge x_t \cdot E_t\Delta y_{t+1} + \Delta x_t\cdot E_t y_{t} - E_tm_t
\]
Combining with the inequality in \eqref{Qqa} gives
\[
\tilde V_{t-1}(x_{t-1}) \ge - x_{t-1}\cdot E_ty_t - E_t\sum_{t'=t}^Tm_{t'}.
\]
By \thref{thm:existce}, there is a unique normal integrand $V_{t-1}$ such that $V_{t-1}=E_{t-1}\tilde V_{t-1}$. By \thref{lem:cp1,ex:celq}, $V_{t-1}$ satisfies the inequality in \eqref{Qqa}. Taking conditional expectations on both sides of \eqref{eq:tildehOC} and using \thref{thm:condoperations,thm:tower}, we get
\[
h_{t-1}(x^{t-1}) = \sum_{s=0}^{t-1}(E_{t-1}K_s)(x_s,\Delta x_s) + V_{t-1}(x_{t-1}).
\]
For $t=T$, $V_T=0$ so \eqref{Qqa} holds by \thref{thm:condoperations}. The claim thus holds by induction on $t$.
\end{proof}

The following analogue of \thref{2lambda} gives sufficient conditions for the assumptions of  \thref{thm:dpL}.

\begin{example}\thlabel{ex:l2}
Assumptions of \thref{thm:dpL,thm:dpL2} hold if \eqref{lagrange} is feasible,
\begin{enumerate}
\item $\{x\in\N\mid \sum_{t=0}^T K^\infty_t(x_t,\Delta x_t)\le 0\}$ is a linear space,
\item there exists $p\in\N^\perp$ and $\epsilon>0$ such that for every $\lambda\in(1-\epsilon,1+\epsilon)$, there exist $y \in L^1$ and $m_t\in L^1$ with 
\[
K_t(x_{t},\Delta x_t) \ge x_{t}\cdot(\lambda p_{t}+\Delta y_{t+1}) + \Delta x_t\cdot y_t - m_t
\]
for all $t$.
\end{enumerate}
\end{example}

\begin{proof}
The second assumption clearly implies the lower bound in both theorems. Summing up the lower bounds in 2, we see that \thref{2lambda} is applicable, which implies the rest of the assumptions.
\end{proof}

The linearity condition in \thref{ex:l2} holds, in particular, if $K_t^\infty\ge 0$ and $K_t^\infty(x_t,x_t)>0$ for every $t=0,\ldots,T$ and $x_t\ne 0$. Indeed, in this case
\[
\{x\in\reals^n\mid \sum_{t=0}^T K^\infty_t(x_t,\Delta x_t)\le 0\}=\{0\}
\]
almost surely. The last condition in \thref{ex:l2} will be discussed in \cite{pp22b}.


\begin{example}[Block-diagonal stochastic LP]\thlabel{bdslp}
Consider the stochastic linear programming problem from \thref{ex:lp} and assume that the constraint $Ax-b\in K$ can be written as
\[
T_t\Delta x_t+W_tx_t - b_t\in C_t\quad t=0,\ldots,T.
\]
This is an instance of \eqref{lagrange} with
\[
K_t(\Delta x_t,x_t,\omega) =
\begin{cases}
  c_t(\omega)\cdot x_t & \text{if $T_t(\omega)\Delta x_t+W_t(\omega)x_t - b_t(\omega)\in C_t(\omega)$},\\
  +\infty & \text{otherwise}.
\end{cases}
\]
If $T_t$, $W_t$, $b_t$ and $C_t$ are $\F_t$-measurable, then $K_t$ is an $\F_t$-measurable convex normal integrand and the dynamic programming recursion \eqref{be_V} can be written as  
\begin{align*}
V_T &= 0,\\
\tilde V_{t-1}(x_{t-1},\omega) &= \inf_{x_t\in\reals^d}\{c_t(\omega)\cdot x_t + V_t(x_t,\omega)\mid T_t(\omega)\Delta x_t+W_t(\omega)x_t - b_t(\omega)\in C_t(\omega)\},\\
V_{t-1} &= E_{t-1}\tilde V_{t-1}.
\end{align*}
This is the classic formulation for linear stochastic programs. \thref{thm:dpL2} gives sufficient conditions for this to be well-defined. 

Existence of solutions to Bellman equations for linearly constrained problems have been established also in \cite{ols76b}. 
\end{example}

\begin{example}[Conditional independence]\thlabel{ex:lagrangeIndep}
Assume that $K_t$ are L-bounded, that $(V_t)_{t=0}^T$ and $(\tilde V_t)_{t=0}^T$ are an $L$-bounded solution of \eqref{be_V} and that there is a sequence of sigma-algebras $(\H_t)_{t=0}^T$ such that $\H_t\subseteq\F_t$ and
\begin{enumerate}
\item
  $K_t$ is $\H_t$-conditionally independent of $\F_t$,
\item
  $\H_{t+1}$ is $\H_t$-conditionally independent of $\F_t$.
\end{enumerate}
Then $\tilde V_t$ is $\H_{t+1}$-measurable and 
\[
V_t= E^{\H_t}\tilde V_t.
\]
In particular, if $(\H_t)_{t=0}^{T+1}$ are mutually independent, then $V_t$ are deterministic.
\end{example}

\begin{proof}
By definition, $V_T=0$ is $\H_T$-measurable.  Assume now that $V_{t+1}$ is $\H_{t+1}$-measurable. By \thref{thm:cini}, condition 1 implies $E_{t+1} K_{t+1}$ is $\H_{t+1}$-measurable. Thus, $\tilde V_t$ is then $\H_{t+1}$-measurable as well. By Theorem~\ref{thm:cini} again, condition 2 implies $V_t=E^{\H_t}\tilde V_t$, so the first two claims follow by induction. The last claim follows from Theorem~\ref{thm:cini}.
\end{proof}

\begin{remark}
Even under the assumptions of  \thref{thm:dpL,thm:dpL2}, $K_t(x_t,\Delta x_t)$ need not be integrable for an optimal solution $x$ of \eqref{lagrange}. Indeed, defining 
\[
K_t(x_t,\Delta x_t,\omega):=x_t\cdot\Delta y_{t+1}(\omega)+y_t(\omega)\cdot\Delta x_t,
\]
for a process $y\in L^1$ with $y_{T+1}:=0$, the assumptions of \thref{ex:l2} are satisfied with $p=0$. Since $\sum_{t=0}^T K_t(x_t,\Delta x_t,\omega)=0$, any $x\in\N$ is optimal, but $K_t(x_t,\Delta x_t)$ need not be  integrable.
\end{remark}

\subsection{Financial mathematics}\label{sec:fm2}

Let $s=(s_t)_{t=0}^T$ be an adapted $\reals^J$-valued stochastic process describing the unit prices of traded assets in a perfectly liquid financial market. Consider the problem of finding a dynamic trading strategy $z=(z_t)_{t=0}^T$ that provides the ``best hedge'' against the financial liability of delivering a random amount $c\in L^0$ of cash at time $T$. If we measure our risk preferences over random cash-flows with the ``expected shortfall'' associated with a nondecreasing nonconstant convex ``loss function'' $V:\reals\to\ereals$, the problem can be written as
\begin{equation}\label{alm}\tag{$ALM$}
\begin{aligned}
  &\minimize\quad EV\left(c-\sum_{t=0}^{T-1}x_t\cdot\Delta s_{t+1}\right)\quad\ovr\quad x\in\N,\\
  &\st\quad x_t\in D_t\quad t=0,\ldots,T\ a.s.,
\end{aligned}
\end{equation}
where $D_t$ is a random $\F_t$-measurable set describing possible portfolio constraints. We will assume $D_T=\{0\}$, which means that all positions have to be closed at the terminal date. Note that nondecreasing convex loss functions $V$ are in one-to-one correspondence with nondecreasing concave utility functions $U$ via $V(c)=-U(-c)$.

Problem \eqref{alm} is standard in financial mathematics although it is based on quite unrealistic assumptions on the financial market. In particular, it assumes that we can buy and sell arbitrary quantities of all assets at prices given by $s$. It also assumes that one can lend and borrow arbitrary amounts of cash at zero interest rate. Under these assumptions, the random variable $c$ can be thought of as the difference of the claim to be hedged and the initial wealth and the sum in the objective can be interpreted as the proceeds from trading from time $t=0$ to $t=T$. 

Problem \eqref{alm} fits the general framework with
\begin{align*}
h(x,\omega) &= V\left(c(\omega)-\sum_{t=0}^{T-1} x_t\cdot\Delta s_{t+1}(\omega),\omega\right) + \sum_{t=0}^{T-1}\delta_{D_t(\omega)}(x_t,\omega).
\end{align*}

We start by giving sufficient conditions for the main existence results in Section~\ref{sec:Lbexist}. It turns out that, in the absence of portfolio constraints, the linearity condition in \thref{2lambda} becomes the classical {\em no-arbitrage} condition
\begin{equation}\tag{NA}\label{na}
x\in\N,\ \sum_{t=0}^{T-1} x_t\cdot\Delta s_{t+1}\ge 0\ a.s.\implies\sum_{t=0}^{T-1} x_t\cdot\Delta s_{t+1}= 0\ a.s.;
\end{equation}
see \thref{rem:fmlin} below. The lower bound in \thref{2lambda} holds, in particular, if there exists a martingale measure $Q\ll P$ such that
\[
cy,\ V^*(dQ/dP)\in L^1,
\]
$V$ is deterministic and either
\begin{equation*}
 AE_-(V):=\limsup_{u\to-\infty} \frac{uV'(u)}{V(u)}<1\quad\text{or}\quad AE_+(V):=\liminf_{u\to+\infty} \frac{uV'(u)}{V(u)}> 1;
\end{equation*}
see \thref{rem:fmlb}. The above limits are known as ``asymptotic elasticies'' of $V$. The conditions above are satisfied by most familiar loss functions such as the exponential, logarithmic and power functions. 

The problem \eqref{alm} fits the general framework with
\begin{align*}
h(x,\omega) &= V\left(c(\omega)-\sum_{t=0}^{T-1} x_t\cdot\Delta s_{t+1}(\omega),\omega\right) + \sum_{t=0}^{T-1}\delta_{D_t(\omega)}(x_t,\omega).
\end{align*}
As before, we assume that $V$ is a nondecreasing, nonconstant convex normal integrand on $\Omega\times\reals$. This implies, in particular that, the recession function $V^\infty$ of $V$ is nondecreasing and strictly positive on strictly positive reals.

\begin{theorem}[Existence of solutions]\thlabel{thm:alm1}
Assume that
\begin{enumerate}
\item
  there exist $y\in L^0$ and $\epsilon>0$ such that
  \[
cy,\ y\Delta s_t,\ \sigma_{D_t}(E_t[y\Delta s_{t+1}]),\ V^*(\lambda y)\in L^1\quad\forall t,\ \lambda\in[1-\epsilon,1+\epsilon]
\]
\item
  the set $\L=\{x\in \N \mid \sum_{t=0}^{T-1} x_t\cdot\Delta s_{t+1} \ge 0,\ x_t\in D^\infty_t \text{ $P$-a.s.}\}$ is linear.
\end{enumerate}
Then \eqref{alm} admits optimal solutions.
\end{theorem}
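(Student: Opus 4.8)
The plan is to deduce the existence of optimal solutions from \thref{thm:dplbc}. That theorem requires \thref{ass:dp}, which I intend to obtain by verifying the two conditions of \thref{2lambda}; this amounts to producing a suitable $p\in\N^\perp$. Writing $h(x,\omega)=V(c(\omega)-x\cdot a(\omega),\omega)+\delta_{D(\omega)}(x)$, where $a(\omega)\in\reals^n$ has $t$-th block $\Delta s_{t+1}(\omega)$ for $t<T$ and zero $T$-th block (so that $x\cdot a=\sum_{t=0}^{T-1}x_t\cdot\Delta s_{t+1}$) and $D:=D_0\times\cdots\times D_T$ with $D_T=\{0\}$, I would take
\[
p_t:=E_t[y\Delta s_{t+1}]-y\Delta s_{t+1}\quad(t=0,\ldots,T-1),\qquad p_T:=0.
\]
Since $y\Delta s_t\in L^1$ for every $t$ by hypothesis~1, each $p_t\in L^1$, and $E_t[p_t]=0$ by construction, so $p\in\N^\perp$ by \thref{perpchar}. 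The point of this choice is that $p_t+ya_t=E_t[y\Delta s_{t+1}]$ for $t<T$ while $p_T+ya_T=0$.

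For condition~1 of \thref{2lambda} I would use its equivalent reformulation $\lambda p\in\dom Eh^*$ for all $\lambda\in[1-\epsilon,1+\epsilon]$. Since $V(\cdot,\omega)$ is proper, lower semicontinuous and convex, $V^{**}=V$ and hence, for any $v$,
\[
h^*(v,\omega)=\sup_{x\in D(\omega)}\inf_{\mu\in\reals}\bigl\{x\cdot(v+\mu a(\omega))+V^*(\mu,\omega)-\mu c(\omega)\bigr\}\le\inf_{\mu}\bigl\{\sigma_{D(\omega)}(v+\mu a(\omega))+V^*(\mu,\omega)-\mu c(\omega)\bigr\},
\]
where the inequality is just $\sup\inf\le\inf\sup$ and $\sigma_{D}=\sum_t\sigma_{D_t}$. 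Evaluating the last infimum at $\mu=\lambda y(\omega)$ and at $v=\lambda p(\omega)$, using positive homogeneity of the $\sigma_{D_t}$ together with $p_t+ya_t=E_t[y\Delta s_{t+1}]$, gives
\[
h^*(\lambda p,\omega)\le\lambda\sum_{t=0}^{T-1}\sigma_{D_t(\omega)}\bigl(E_t[y\Delta s_{t+1}](\omega)\bigr)+V^*(\lambda y(\omega),\omega)-\lambda c(\omega)y(\omega),
\]
and the right-hand side lies in $L^1$ by hypothesis~1. Hence $Eh^*(\lambda p)<\infty$, which is condition~1.

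For condition~2 I would compute the recession integrand. Using that taking recession functions commutes with precomposition by the affine map $x\mapsto c-x\cdot a$ (through its linear part) and with the sum of the two proper convex integrands (legitimate once \eqref{alm} is feasible, so that $\dom h\ne\emptyset$ a.s.), one gets $h^\infty(x,\omega)=V^\infty(-x\cdot a(\omega),\omega)+\delta_{D^\infty(\omega)}(x)$, where $D^\infty=D_0^\infty\times\cdots\times D_T^\infty$ and $D_T^\infty=\{0\}$. Since $V^\infty$ is nondecreasing with $V^\infty(0)=0$ and $V^\infty>0$ on $(0,\infty)$, we have $V^\infty(u)\le 0$ iff $u\le 0$; therefore $h^\infty(x,\omega)\le 0$ iff $\sum_{t=0}^{T-1}x_t\cdot\Delta s_{t+1}(\omega)\ge 0$ and $x_t\in D_t^\infty(\omega)$ for all $t$. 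Consequently $\{x\in\N\mid h^\infty(x)\le 0\ \text{a.s.}\}=\L$, which is linear by hypothesis~2. Together with feasibility of \eqref{alm}, this yields both conditions of \thref{2lambda}, hence \thref{ass:dp}, and \thref{thm:dplbc} then furnishes a solution of \eqref{be} and an optimal solution of \eqref{alm} characterized by the scenariowise minimization \eqref{op}.

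The main obstacle is the conjugate estimate in the second paragraph; the two features that make it go through cleanly are that only the trivial minimax inequality $\sup\inf\le\inf\sup$ is needed (so no constraint qualification or strong duality is invoked) and that the choice $p_t=E_t[y\Delta s_{t+1}]-y\Delta s_{t+1}$ is engineered precisely so that $p_t+ya_t=E_t[y\Delta s_{t+1}]$ and the support-function term becomes the integrable quantity $\sigma_{D_t}(E_t[y\Delta s_{t+1}])$ appearing in hypothesis~1. The recession computation is routine convex analysis, the only points of care being $D_T=\{0\}$ and the vanishing of the $x_T$-block of $a$; feasibility of \eqref{alm}, needed both here and in \thref{2lambda}, is taken as part of the standing hypotheses.
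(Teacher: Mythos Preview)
Your proposal is correct and follows essentially the same route as the paper: the same choice $p_t=E_t[y\Delta s_{t+1}]-y\Delta s_{t+1}$, the same recession computation identifying $\{x\in\N\mid h^\infty(x)\le 0\}$ with $\L$, and the same appeal to \thref{2lambda} and \thref{thm:dplbc}. The only cosmetic difference is that the paper verifies condition~1 of \thref{2lambda} by applying Fenchel's inequality separately to $V$ and to each $\delta_{D_t}$ to obtain the pointwise lower bound $h(x)\ge\lambda x\cdot p-m$ directly, whereas you use the equivalent characterization $\lambda p\in\dom Eh^*$ and bound $h^*(\lambda p)$ via the minimax inequality; since \thref{2lambda} itself records that these formulations are equivalent, the arguments are interchangeable.
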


\begin{proof}
By \thref{thm:dpLb,thm:dplbc}, it suffices to show that the two conditions in \thref{2lambda} hold. By Fenchel's inequality, $V(u)\ge \lambda uy-V^*(\lambda y)$ and
\[
\delta_{D_t}(x_t)\ge \lambda x_t\cdot(E_t[y\Delta s_{t+1}])-\lambda\sigma_{D_t}(E_t[y\Delta s_{t+1}])
\]
so
\[
h(x) \ge \lambda cy - V^*(\lambda y) - \lambda \sum_{t=0}^{T-1}x_t\cdot[y\Delta s_{t+1}-E_t[y\Delta s_{t+1}] - \lambda\sum_{t=0}^{T-1}\sigma_{D_t}(E_t[y\Delta s_{t+1}]).
\]
Choosing $p_t=E_t[y\Delta s_{t+1}]-y\Delta s_{t+1}$ and  
\begin{align*}
  m &= \max_{\lambda\in[1-\epsilon,1+\epsilon]}\{V^*(\lambda y)+\lambda\sum_{t=0}^{T-1}\sigma_{D_t}(E_t[y\Delta s_{t+1}])-\lambda cy\}
\end{align*}
gives the first condition of \thref{2lambda}. Indeed, by convexity of $V^*$, the maximum in the expression of $m$ is attained scenariowise at $\lambda=1-\epsilon$ or $\lambda=1+\epsilon$ so $m$ is integrable as the pointwise maximum of two integrable functions. By \cite[Theorem~9.3]{rw98},
\begin{align*}
h^\infty(x,\omega) &= V^\infty\left(-\sum_{t=0}^{T-1} x_t\cdot\Delta s_{t+1}(\omega),\omega\right) + \sum_{t=0}^{T-1}\delta_{D_t^\infty(\omega)}(x_t,\omega)
\end{align*}
so condition 2 in \thref{2lambda} means that
\[
\L=\{x\in \N \mid V^\infty\left(-\sum_{t=0}^{T-1} x_t\cdot\Delta s_{t+1}\right) \le 0,\ x_t\in D^\infty_t \text{ $P$-a.s.}\}
\]
is a linear space. Since $V$ is not a constant function, we have $V^\infty(u)> 0$ for $u>0$ and hence 
\[
V^\infty(-\sum_{t=0}^{T-1} x_t\cdot\Delta s_{t+1}) \le 0\quad \Leftrightarrow\quad \sum_{t=0}^{T-1} x_t\cdot\Delta s_{t+1} \ge 0.
\]
Thus,
\[
\L=\{x\in \N \mid \sum_{t=0}^{T-1} x_t\cdot\Delta s_{t+1} \ge 0,\ x_t\in D^\infty_t \text{ $P$-a.s.}\},
\]
so condition 2 in \thref{2lambda} holds.
\end{proof}

The above extends the existence results in Theorems~2.7 and 2.10 of \cite{rs5} by allowing for portfolio constraints and more general utility functions.

\begin{remark}\thlabel{rem:fmlin}
In the absence of portfolio constraints, condition 2 in \thref{thm:alm1} becomes~\eqref{na}.
\end{remark}

\begin{remark}\thlabel{rem:fmlb}
Condition 1 in \thref{thm:alm1} holds, in particular, if $V$ is lower bounded since then, $EV^*(0)<\infty$ so one can simply take $y=0$. More generally, the condition holds if there exists a $y\in L^1$ such that
\[
cy,\ \sigma_{D_t}(E_t[y\Delta s_{t+1}]),\ V^*(y)\in L^1\quad\forall t
\]
and one of the following conditions hold
\begin{enumerate}
\item[A]
  There exist $\lambda\in(0,1)$, $\bar y\in\dom EV^*$ and $C>0$ such that
  \[
  V^*(\lambda y,\omega)\le CV^*(y,\omega)\quad\forall y\in[0,\bar y(\omega)].
  \]
\item[B]
  There exist $\lambda>1$, $\bar y\in\dom EV^*$ and $C>0$ such that
  \[
  V^*(\lambda y,\omega)\le CV^*(y,\omega)\quad\forall y\ge \bar y(\omega).
  \]
\end{enumerate}
If $V$ is deterministic, then condition A holds if $AE_-(V)<1$ while B holds if $AE_+(V)>1$.
\end{remark}

\begin{proof}
The first claim is obvious. As to the second, let $y\in\dom EV^*$. Under A,
\begin{align*}
EV^*(\lambda y) &= E\one_{\{y\le \bar y\}}V^*(\lambda y) + E\one_{\{y> \bar y\}}V^*(\lambda y)\\
&\le E\one_{\{y\le \bar y\}}V^*(\lambda y) + E\one_{\{y> \bar y\}}\max\{V^*(\lambda \bar y),V^*(y)\}\\
&\le E\one_{\{y\le \bar y\}}CV^*(y) + E\one_{\{y> \bar y\}}\max\{CV^*(\bar y),V^*(y)\},
\end{align*}
where the first inequality comes from the convexity of $V^*$. Since $\bar y,y\in\dom EV^*$, the last expression is integrable. Under B,
\begin{align*}
V^*(\lambda y) &= \one_{\{y\le \bar y\}}V^*(\lambda y) + \one_{\{y> \bar y\}}V^*(\lambda y)\\
&\le \one_{\{y\le \bar y\}}\max\{V^*(y),V^*(\lambda \bar y)\} + \one_{\{y> \bar y\}}V^*(\lambda y)\\
&\le \one_{\{y\le \bar y\}}\max\{V^*(y),CV^*(\bar y)\} + \one_{\{y> \bar y\}}CV^*(y),
\end{align*}
where the first inequality comes from the convexity of $V^*$. Thus, under both A and B, $\lambda\dom EV^*\subseteq\dom EV^*$ so condition 1 in \thref{thm:alm1} holds.

We have $AE_-(V)<p$ if and only if there exists $\bar u<0$ such that 
\[
V'(u)\le p V(u)/u \quad\forall u\le \bar u,
\]
while $AE_+(V)>p$ if and only if there exists $\bar u>0$ such that 
\[
V'(u)\ge p V(u)/u \quad\forall u\ge \bar u,
\]
Thus, the last claim follows from \cite[Lemmas~20 and 21]{pp18d}.
\end{proof}

When applied directly to \eqref{alm}, the generalized Bellman equations \eqref{be} do not provide much information about the solutions. We will thus reformulate \eqref{alm} as an optimal control problem and find that the optimal investment strategy at time $t$ depends on the past allocations only through the current level of wealth. If $s$ is componentwise almost surely nonzero, we can write \eqref{alm} as a stochastic control problem
\begin{equation*}
\begin{aligned}
&\minimize\quad &  EV&\left(c-X_T \right)\quad\ovr\quad (X,U)\in\N,\\
&\st &  X_0 &= 0\\
& & \Delta X_{t} &= R_{t}\cdot U_{t-1}\quad\forall t=1,\dots,T\\
& & U_t &\in\tilde D_t\quad\forall t=0,\dots,T
\end{aligned}
\end{equation*}
where $X_t$ is the wealth generated by the trading strategy up to time $t$, $R^j_t:=\Delta s^j_t/s^j_{t-1}$ is the rate of return on asset $j$, $U_t^j:=s^j_tx^j_t $ is the amount of cash invested in asset $j$ over the period $(t,t+1]$ and
\[
\tilde D_t(\omega)=\{U\in\reals^{J}\mid (U^j/s^j_t(\omega))_{j\in J}\in D_t(\omega)\}.
\]
This formulation can be extended as follows.

\begin{example}\thlabel{ex:fmdp}
Consider the problem
\begin{equation*}
\begin{aligned}
&\minimize\quad &  EV&\left(c-X_T \right)\quad\ovr\quad (X,U)\in\N,\\
&\st &  X_0 &= w\\
& & \Delta X_{t} &= R_{t}\cdot U_{t-1}\quad\forall t=1,\dots,T\\
& & (X_t,U_t) &\in\hat D_t\quad\forall t=0,\dots,T-1,
\end{aligned}
\end{equation*}
where $\hat D_t$ is an $\F_t$-measurable random set in $\reals\times\reals^J$ and $w\in L^0(\F_0)$ is a given initial wealth. The dependence of the portfolio constraint on the wealth $X_t$ has practical significance. For instance,
\[
\hat D=\{(X_t,U_t)\mid \one\cdot U_t \le \alpha X_t\}
\]
describes a capital requirement where at most the proportion $\alpha$ of the current wealth can be invested in the risky assets while the constraint
\[
\hat D=\{(X_t,U_t)\mid \one\cdot U_t = X_t\}
\]   
would require the whole wealth to be invested in the risky assets. Various combinations of the above e.g., with short-selling constraints, can be incorporated into the constraints $\hat D$.

The above model fits the control format with $N=1$, $M=|J|$, $A_t=0$, $B_t=R_t$, $W_t=0$ and
\begin{align*}
  L_T(X_T,U_T) &=V(c-X_T),\\
  L_t(X_t,U_t) &=\delta_{\hat D_t}(X_t,U_t),\\
  L_0(X_0,U_0) &=\delta_{\{w\}}(X_0)+\delta_{\hat D_0}(X_t,U_t).
\end{align*}
Assuming that $V$ and $c$ are $\F_T$-measurable, the dynamic programming equations \eqref{be_OC} can be written as 
\begin{align}\label{eq:ocfm2}
\begin{split}
J_T(X_T) &=V(c-X_T)\\
I_{t+1}(X_{t},U_{t}) &= J_{t+1}(X_{t}+R_{t+1}\cdot U_{t}) \quad t=T-1,\dots,0,\\
J_t(X_t) &= \inf_{U_t\in\reals^J} \{(E_t I_{t+1})(X_t,U_t)\mid (U_t,X_t)\in \hat D_t\}\quad t=T-1,\dots,1,\\
J_0(X_0) &= \delta_{\{w\}}(X_0) + \inf_{U_0\in\reals^J} \{(E_0I_1)(X_0,U_0) \mid (X_0,U_0)\in \hat D_0 \}.
\end{split}
\end{align}
The optimality conditions \eqref{eq:ocfm2} imply that the optimal portfolio $U_t$ depends on the past decisions only via $X_t$.
\end{example}



\begin{example}[Exponential utility]\thlabel{ex:exputil}
Consider the model of \thref{ex:fmdp} in the case where $V(u)=\exp(\rho u)/\rho$ and $\hat D_t(\omega)=\reals\times\tilde D_t(\omega)$. A simple induction argument in \eqref{eq:ocfm2} gives
\begin{align*}
J_t(X_t) &=  \alpha_tV(-X_t),
\end{align*}
where $\alpha_T=\exp(\rho c)$ and 
\[
\alpha_t=\inf_{U_t\in\tilde D_t} E_t[\alpha_{t+1}\exp(-\rho R_{t+1}\cdot U_t)].
\]
An adapted portfolio process $\bar U$ satisfies the optimality condition  if and only if $\bar U_t$ achieves the infimum above almost surely for all $t$. Note that the amounts of wealth $U_t$ invested in the risky assets do not depend on the level of wealth $X_t$.

Assuming further that each $R_t$ is independent of $\F_{t-1}$ and that $\tilde D_t$ and $c$ are deterministic, we see that $\alpha_t$ are constants and the cost-to-go functions $J_t$ are deterministic. If, in addition, if $R_t$ is normally distributed with mean $\mu_t$ and covariance $\Sigma_t$, then, by the properties of moment generating functions,
\begin{align*}
E_t[\alpha_{t+1}\exp(-\rho R_{t+1}\cdot U_t)] &=\alpha_{t+1}E[\exp(-\rho R_{t+1}\cdot U_t)]\\
& = \alpha_{t+1}\exp(-\rho U_t\cdot\mu_t+\frac{\rho^2}{2}U_t\cdot\Sigma_t U_t),
\end{align*}
so, when there are no portfolio constraints, it is optimal to take $U_t =\Sigma_t^{-1}\mu_t/\rho$. This is a classic finding in portfolio optimization going back to \cite{mer69}.
\end{example}




\section{Appendix}

The following is a reformulation of \cite[Theorem~9.2]{roc70a}.

\begin{lemma}\thlabel{lineality}
Let $f:\reals^n\times\reals^m\to\ereals$ be convex and assume that 
\[
N:=\{x\in\reals^{n} \mid f^\infty(x,0)\le 0\}
\]
is linear. Then 
\[
p(u):=\inf_{x\in\reals^n} f(x,u,\omega)
\]
is a convex lsc function and
\[
p^\infty(u) = \inf_{x\in\reals^{n}}f^\infty(x,u).
\]
Moreover, $\argmin f(\cdot,u)\cap N^\perp \ne\emptyset$ for all $u\in\reals^m$.
\end{lemma}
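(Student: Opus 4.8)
The plan is to view $p$ as the image of $f$ under the coordinate projection $A\colon\reals^n\times\reals^m\to\reals^m$, $A(x,u)=u$, and to read off every assertion from the classical image theorem \cite[Theorem~9.2]{roc70a}. In the notation of \cite[Section~9]{roc70a} one has $p(u)=\inf\{f(x,u)\mid A(x,u)=u\}=(Af)(u)$, so the first step is to check that the hypothesis of \cite[Theorem~9.2]{roc70a} holds here: namely, that the only directions of recession of $f$ lying in $\ker A=\reals^n\times\{0\}$ are directions of constancy of $f$. A pair $(x,0)$ is a direction of recession of $f$ precisely when $f^\infty(x,0)\le0$, i.e.\ precisely when $x\in N$; and if $N$ is a subspace then, for $x\in N$, also $-x\in N$, so $f^\infty(x,0)\le0$ and $f^\infty(-x,0)\le0$, while subadditivity of $f^\infty$ gives $f^\infty(x,0)+f^\infty(-x,0)\ge f^\infty(0,0)=0$; hence $f^\infty$ vanishes on $N\times\{0\}$ and every recession direction of $f$ in $\ker A$ does lie in the lineality space of $f$. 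Here $f$ is understood to be a proper lsc convex function, as in all the applications; without this, $p$ may fail to be lsc — already $f=\delta_C$ with $C=\{(x,u)\mid x>0,\ xu\ge1\}$ gives $p=\delta_{(0,\infty)}$, and correspondingly $N=[0,\infty)$ is not linear.

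Granting this, I would invoke \cite[Theorem~9.2]{roc70a} for this $A$: it yields at once that $Af=p$ is a proper lsc convex function and that $(Af)^\infty=A(f^\infty)$, which, written out for this particular $A$, is precisely
\[
p^\infty(u)=\inf_{x\in\reals^n}f^\infty(x,u).
\]
The same theorem also gives that, for every $u$ with $p(u)<+\infty$, the infimum defining $p(u)$ is attained (directly: a minimizing sequence for $f(\cdot,u)$ is bounded modulo $N$, since every nonempty sublevel set of the closed proper convex function $f(\cdot,u)$ has recession cone $N$, and one passes to a limit using lower semicontinuity of $f$). When $p(u)=+\infty$ every $x$ minimizes $f(\cdot,u)$, and since $p$ is proper the value $-\infty$ never occurs.

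For the last assertion I would fix $u$. If $p(u)=+\infty$ then $\argmin f(\cdot,u)=\reals^n\ni0\in N^\perp$, so assume $p(u)$ is finite and choose $\bar x\in\argmin f(\cdot,u)$, so that $(\bar x,u)\in\dom f$. For any $v\in N$ one also has $-v\in N$, hence $f^\infty(v,0)\le0$ and $f^\infty(-v,0)\le0$, and therefore, for all $t\ge0$,
\[
f(\bar x\pm tv,u)\le f(\bar x,u)+t\,f^\infty(\pm v,0)\le f(\bar x,u)=p(u),
\]
which forces equality; thus $\bar x+\reals v\subseteq\argmin f(\cdot,u)$, and letting $v$ range over $N$ gives $\bar x+N\subseteq\argmin f(\cdot,u)$. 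Decomposing $\bar x=x_1+x_2$ with $x_1\in N$ and $x_2\in N^\perp$, the point $x_2=\bar x-x_1$ lies in $\argmin f(\cdot,u)\cap N^\perp$.

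The convexity of $p$ and the recession identity are routine once the image-function calculus is in place; the one substantive point — and the only place where the linearity of $N$ is genuinely used — is the lower semicontinuity of $p$ together with the attainment of the infimum, which is exactly what \cite[Theorem~9.2]{roc70a} supplies. So the real work of the proof is the translation of the hypothesis done in the first paragraph.
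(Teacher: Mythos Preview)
Your proof is correct and follows essentially the same route as the paper: both verify the hypothesis of \cite[Theorem~9.2]{roc70a} for the projection $A(x,u)=u$ and read off the lower semicontinuity of $p$, the recession formula, and attainment of the infimum from that theorem. The only minor difference is that the paper obtains the minimizer in $N^\perp$ by citing \cite[Corollary~8.6.1]{roc70a}, whereas you spell out the translation-invariance argument directly; your added caveat that $f$ must be taken proper lsc for the argument to go through is a useful observation the paper leaves implicit.
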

\begin{proof}
Since $N$ is a convex cone, the linearity condition means that
\[
f^\infty(x,u)\le 0,\ f^\infty(-x,-u) >0\quad \Rightarrow\quad A(x,u) \ne 0,
\]
where $A(x,u)=u$. Thus, by \cite[Theorem~9.2]{roc70a}, $p$ is a lsc convex function, the formula for $p^\infty$ is valid and the infimum in the definition of $p$ is attained. That the infimum is attained by $x\in N^\perp$ follows from \cite[Corollary 8.6.1]{roc70a}
\end{proof}

The following was used in the proof of \thref{thm:dp0} and some of its corollaries in Section~\ref{sec:dpapp}.

\begin{theorem}\thlabel{thm:ip}
Let $f$ be a convex normal integrand on $\reals^n\times\reals^m\times\Omega$ and assume that the set-valued mapping
\[
N(\omega)=\{x\in\reals^{n} \mid f^\infty(x,0,\omega)\le 0\}
\]
is linear-valued. Then 
\[
p(u,\omega):=\inf_{x\in\reals^n} f(x,u,\omega)
\]
is a normal integrand on $\reals^m\times\Omega$ and
\[
p^\infty(u,\omega) = \inf_{x\in\reals^{n}}f^\infty(x,u,\omega).
\]
Moreover, given a $u\in \L^0(\F)$, there is an $x\in \L^0(\F)$ with $x(\omega)\perp N(\omega)$ and
\[
p(u(\omega),\omega) = f(x(\omega),u(\omega),\omega).
\]
\end{theorem}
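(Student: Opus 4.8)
The plan is to reduce everything to the scenariowise statement \thref{lineality} and then recover measurability at the level of epigraphs and via a measurable selection. First I would fix $\omega\in\Omega$ and apply \thref{lineality} to the convex function $f(\cdot,\cdot,\omega)$, whose ``$u=0$ recession set'' is $N(\omega)$, a linear subspace by hypothesis. This yields, for every $\omega$: $p(\cdot,\omega)$ is convex and lower semicontinuous; the recession formula $p^\infty(u,\omega)=\inf_{x}f^\infty(x,u,\omega)$ holds; the infimum defining $p(u,\omega)$ is attained for each $u$; and it is in fact attained at some point of $N(\omega)^\perp$. The recession formula in the statement is thereby already established pointwise in $\omega$, so two things remain: normality of $p$, and the measurable selection.

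For normality, let $\pi:\reals^{n}\times\reals^{m}\times\reals\to\reals^{m}\times\reals$ be the projection $(x,u,\alpha)\mapsto(u,\alpha)$. One always has $\pi(\epi f(\cdot,\cdot,\omega))\subseteq\epi p(\cdot,\omega)$, and the attainment clause above gives the reverse inclusion; hence $\epi p(\cdot,\omega)=\pi(\epi f(\cdot,\cdot,\omega))$, which in particular is closed and convex. Since $f$ is a convex normal integrand, $\omega\mapsto\epi f(\cdot,\cdot,\omega)$ is a measurable closed-convex-valued mapping; its image under the continuous map $\pi$ is a measurable set-valued mapping (see \cite[Chapter~14]{rw98}), which we have just observed is closed- and convex-valued. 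Thus $\omega\mapsto\epi p(\cdot,\omega)$ is a measurable closed-convex-valued mapping, i.e.\ $p$ is a convex normal integrand on $\reals^{m}\times\Omega$.

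For the selection, fix $u\in L^{0}(\F)$ and set
\[
S(\omega):=\Big(\argmin_{x\in\reals^{n}} f(x,u(\omega),\omega)\Big)\cap N(\omega)^{\perp}.
\]
By \thref{lineality}, $S(\omega)\ne\emptyset$ for every $\omega$, so by \cite[Corollary~14.6]{rw98} it suffices to verify that $S$ is a measurable closed-valued mapping. The recession function $f^{\infty}$ is again a convex normal integrand (its epigraph is the recession cone of $\epi f$), so $\omega\mapsto N(\omega)=\{x\mid f^{\infty}(x,0,\omega)\le 0\}$ is a measurable closed-valued mapping, and since it is linear-valued, $\omega\mapsto N(\omega)^{\perp}$ is measurable as well. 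Because $p$ is a normal integrand and $u\in L^{0}(\F)$, the map $\omega\mapsto p(u(\omega),\omega)$ is measurable, while $(x,\omega)\mapsto f(x,u(\omega),\omega)$ is a normal integrand; hence $\omega\mapsto\argmin_{x}f(x,u(\omega),\omega)=\{x\mid f(x,u(\omega),\omega)\le p(u(\omega),\omega)\}$ is measurable and closed-valued. An intersection of two measurable closed-valued mappings is measurable, so $S$ is, and any measurable selection $x\in L^{0}(\F)$ of $S$ satisfies $x(\omega)\perp N(\omega)$ and $p(u(\omega),\omega)=f(x(\omega),u(\omega),\omega)$, as required.

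The routine steps are routine; the point needing care is the identity $\epi p(\cdot,\omega)=\pi(\epi f(\cdot,\cdot,\omega))$, which uses precisely the attainment assertion of \thref{lineality} (without it the projection would only be closed up to its closure, and $p$ need not be a normal integrand), together with the bookkeeping that the orthogonal complement of a measurably varying linear subspace varies measurably and that the argmin multifunction is closed- and nonempty-valued.
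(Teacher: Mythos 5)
Your proposal is correct and follows essentially the same route as the paper: apply \thref{lineality} scenariowise to get lower semicontinuity, the recession formula, and nonemptiness of $S(\omega)=\argmin_xf(x,u(\omega),\omega)\cap N(\omega)^\perp$, then handle measurability of $\epi p$ and of $S$, and finish with a measurable selection via \cite[Corollary~14.6]{rw98}. The only difference is cosmetic: where the paper cites \cite[Proposition~14.47]{rw98} for normality of the inf-projection, you reprove that fact directly through the identity $\epi p(\cdot,\omega)=\pi(\epi f(\cdot,\cdot,\omega))$, which is a valid (and self-contained) substitute.
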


\begin{proof}
By \thref{lineality}, the linearity condition implies for every $\omega\in\Omega$, that the set
\[
S(\omega):= N(\omega)^\perp\cap\argmin_xf(x,u,\omega)
\]
is nonempty and that $p(\cdot,\omega)$ is a lower semicontinuous convex function with
\[
p^\infty(u,\omega)=\inf_{x\in\reals^n}f^\infty(x,u,\omega).
\]
By \cite[Proposition 14.47]{rw98}, the lower semicontinuity implies that $p$ is a normal integrand. By \cite[Exercise~14.54, Proposition~14.33, Proposition~14.11, Theorem~14.37 ]{rw98}, the set $S$ is measurable so, by \cite[Corollary~14.6]{rw98}, it admits a measurable selection $x$.
\end{proof}

The three lemmas below extend classical properties of expectations and conditional expectations to general, possibly nonintegrable random variables.

\begin{lemma}\thlabel{lem:Esum}
Given extended real-valued random variables $\xi_1$ and $\xi_2$, we have
\[
E[\xi_1+\xi_2]=E[\xi_1]+E[\xi_2]
\]
under any of the following:
\begin{enumerate}
\item\label{int1}
  $\xi_1^+,\xi_2^+\in L^1$ or $\xi_1^-,\xi_2^-\in L^1$.
  \item\label{int2}
  $\xi_1\in L^1$ or $\xi_2\in L^1$,
\item\label{int4}
  $\xi_1$ or $\xi_2$ is $\{0,+\infty\}$-valued.
\end{enumerate}
\end{lemma}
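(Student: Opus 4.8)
The plan is to handle the three cases separately, reducing each to two facts: expectation is additive and monotone on \emph{nonnegative} extended real-valued random variables (values in $[0,+\infty]$, no integrability needed), and, with the convention $(+\infty)+a=+\infty$ for every $a\in\ereals$, the sum $\xi_1+\xi_2$ is everywhere defined so that the integral convention of the introduction applies to it. The one elementary observation used repeatedly is: if $0\le\eta\le\zeta_1+\zeta_2$ with $\zeta_1,\zeta_2\ge 0$, $E\zeta_2<\infty$ and $E\eta=+\infty$, then $E\zeta_1=+\infty$. I expect the only genuine subtlety to be pinning down the arithmetic for $\pm\infty$ — in particular that $+\infty$ absorbs in sums, which is exactly what makes case \ref{int4} a correct statement — together with the routine remark that $E[-\xi]=-E[\xi]$ whenever $\xi^-\in L^1$, which underlies the symmetry reductions; everything else is bookkeeping.

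For \ref{int1}, by replacing $\xi_1,\xi_2$ with $-\xi_1,-\xi_2$ it suffices to treat $\xi_1^+,\xi_2^+\in L^1$. Then $\xi_1,\xi_2<+\infty$ a.s., so $\xi_1+\xi_2$ takes values in $[-\infty,+\infty)$, and $(\xi_1+\xi_2)^+\le\xi_1^++\xi_2^+\in L^1$, so $E(\xi_1+\xi_2)$, $E\xi_1$, $E\xi_2$ all lie in $[-\infty,+\infty)$. If also $\xi_1^-,\xi_2^-\in L^1$ this is the classical integrable case. Otherwise, say $\xi_1^-\notin L^1$, so $E\xi_1=-\infty$ and hence $E\xi_1+E\xi_2=-\infty$; from the pointwise bound $(\xi_1+\xi_2)^-+\xi_1^++\xi_2^+\ge-(\xi_1+\xi_2)+\xi_1^++\xi_2^+=\xi_1^-+\xi_2^-\ge\xi_1^-$ and the observation above one gets $E(\xi_1+\xi_2)^-=+\infty$, hence $E(\xi_1+\xi_2)=-\infty$, matching.

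For \ref{int2}, assume $\xi_1\in L^1$. If $\xi_2^+\in L^1$ we are back in case \ref{int1}. If $\xi_2^+\notin L^1$, then $E\xi_2=+\infty$, so $E\xi_1+E\xi_2=+\infty$; since $\xi_1$ is finite a.s.\ we may write $\xi_2=(\xi_1+\xi_2)-\xi_1$ a.s., whence $\xi_2^+\le(\xi_1+\xi_2)^++\xi_1^-$, and if $(\xi_1+\xi_2)^+$ were integrable then so would be $\xi_2^+$ — a contradiction — so $E(\xi_1+\xi_2)=+\infty$. For \ref{int4}, assume $\xi_1$ is $\{0,+\infty\}$-valued and set $A:=\{\xi_1=+\infty\}$. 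If $P(A)=0$ then $\xi_1=0$ a.s.\ and the identity is immediate. If $P(A)>0$ then $E\xi_1=+\infty$, so $E\xi_1+E\xi_2=+\infty$; and by the additive convention $\xi_1+\xi_2=+\infty$ on $A$, so $(\xi_1+\xi_2)^+$ is not integrable and $E(\xi_1+\xi_2)=+\infty$. This covers all three cases.
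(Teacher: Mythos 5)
Your proof is correct and follows essentially the same route as the paper's: both arguments reduce everything to additivity of the expectation on nonnegative variables plus pointwise inequalities between the positive and negative parts of $\xi_1$, $\xi_2$ and $\xi_1+\xi_2$, invoking the convention that $E\xi=+\infty$ whenever $\xi^+\notin L^1$. The only cosmetic differences are that you dispatch the subcase $\xi_1^-,\xi_2^-\in L^1$ by negating (where the paper appeals to ``an analogous argument'') and that you spell out case 3 and the $(+\infty)+a=+\infty$ convention it relies on, which the paper simply declares clear.
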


\begin{proof}
If both $\xi_1$ and $\xi_2$ are positive, the equation holds by monotone convergence theorem. Assume now that $\xi_1,\xi_2\in L^1$ and let $\xi=\xi_1+\xi_2$. We have $\xi_+-\xi_-=\xi_1^+-\xi_1^-+\xi_2^+-\xi_2^-$ so $\xi^++\xi_1^-+\xi_2^-=\xi^-+\xi_1^++\xi_2^+$ and
\[
E\xi^++E\xi_1^-+E\xi_2^-=E\xi^-+E\xi_1^++E\xi_2^+.
\]
Rearranging gives the equality. In general, 
\begin{align*}
  &(\xi_1+\xi_2)^+\le \xi_1^++\xi_2^+,\quad(\xi_1+\xi_2)^-\le \xi_1^-+\xi_2^-,\\
  &\xi_i^+ \le (\xi_i+\xi_j)^++\xi_j^-,\quad \xi_i^-\le (\xi_i+\xi_j)^- +\xi_j^+.
\end{align*}
If $\xi_1^+,\xi_2^+\in L^1$, the first inequality implies $(\xi_1+\xi_2)^+\in L^1$. If $\xi_i^-\notin L^1$, the last inequality gives $(\xi_i+\xi_j)^-\notin L^1$ so the equation holds. The sufficiency of $\xi_1^-,\xi_2^-\in L^1$ follows by an analogous argument. Assume now $\xi_1\in L^1$. If $\xi_2^+\in L^1$, the equation holds by \ref{int1} while if $\xi_2^+\notin L^1$, the second inequality gives $(\xi_1+\xi_2)^+\notin L^1$, so the equation holds again. Claim \ref{int4} is clear.
\end{proof}

\begin{lemma}\thlabel{lem:ce}
Let $\xi_1$ and $\xi_2$ be extended real-valued random variables.
\begin{enumerate}
\item
  If $\xi_1$ and $\xi_2$ are quasi-integrable and satisfy any of the conditions in Lemma~\ref{lem:Esum}, then $\xi_1+\xi_2$ is quasi-integrable and
  \[
  E^\G[\xi_1+\xi_2] = E^\G [\xi_1] + E^\G[\xi_2].
  \]
\item
  If $\xi_2$ and $(\xi_1\xi_2)$ are quasi-integrable, and $\xi_1$ is $\G$-measurable, then
  \[
  E^\G[\xi_1\xi_2] = \xi_1E^\G[\xi_2].
  \]
\end{enumerate}
\end{lemma}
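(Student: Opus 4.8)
For part 1, the plan is to reduce everything to \thref{lem:Esum} by testing against bounded nonnegative $\G$-measurable functions. The two facts I would record first are: (i) if the pair $(\xi_1,\xi_2)$ satisfies one of conditions \ref{int1}--\ref{int4} of \thref{lem:Esum}, then so do the pairs $(\alpha\xi_1,\alpha\xi_2)$ and $(\alpha E^\G\xi_1,\alpha E^\G\xi_2)$ for every $\alpha\in L^\infty_+(\Omega,\G,P)$ --- for the latter one uses that $E^\G$ is monotone on quasi-integrable random variables (test against $\alpha\ge 0$), so that the one-sided integrability of $\xi_i$ is inherited by $E^\G\xi_i$; and (ii) the quasi-integrability of $\xi_1+\xi_2$ and the a.s.\ well-definedness of $E^\G\xi_1+E^\G\xi_2$ follow from the inequalities $(\xi_1+\xi_2)^{\pm}\le\xi_1^{\pm}+\xi_2^{\pm}$ exactly as in the proof of \thref{lem:Esum}. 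Then, for $\alpha\in L^\infty_+(\Omega,\G,P)$, \thref{lem:Esum} (applied to $(\alpha\xi_1,\alpha\xi_2)$ and to $(\alpha E^\G\xi_1,\alpha E^\G\xi_2)$) together with the defining property of conditional expectation gives
\[
E[\alpha(\xi_1+\xi_2)]=E[\alpha\xi_1]+E[\alpha\xi_2]=E[\alpha E^\G\xi_1]+E[\alpha E^\G\xi_2]=E[\alpha(E^\G\xi_1+E^\G\xi_2)],
\]
and since $E^\G\xi_1+E^\G\xi_2$ is $\G$-measurable, the uniqueness clause in the definition of $E^\G$ yields $E^\G[\xi_1+\xi_2]=E^\G\xi_1+E^\G\xi_2$.

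For part 2 I would run the standard bootstrap in the $\G$-measurable factor. First, for $A\in\G$ and any quasi-integrable $\xi_2$ one has $\alpha\one_A\in L^\infty_+(\G)$ whenever $\alpha\in L^\infty_+(\G)$, so $E[\alpha E^\G(\one_A\xi_2)]=E[\alpha\one_A\xi_2]=E[\alpha\one_A E^\G\xi_2]$, whence $E^\G(\one_A\xi_2)=\one_A E^\G\xi_2$; the same computation gives $E^\G(c\xi_2)=cE^\G\xi_2$ for constants $c\ge0$. Combining these with part 1 handles nonnegative simple $\G$-measurable $\xi_1$ --- the summands $c_i\one_{A_i}\xi_2$ have pairwise disjoint supports, so each partial sum retains the one-sided integrability of $\xi_1\xi_2$ and part 1 applies at every step. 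Passing to nonnegative $\G$-measurable $\xi_1$ and nonnegative $\xi_2$ is then the usual monotone-convergence-for-conditional-expectations argument, with no integrability subtleties since everything is $[0,\infty]$-valued. Finally, for the general case I would write $\xi_1\xi_2$ as the sum of the four disjointly supported terms $\xi_1^{\pm}\xi_2^{\pm}$ (with appropriate signs), apply part 1 (condition \ref{int1} of \thref{lem:Esum} holds for the relevant partial sums because $\xi_1\xi_2$ is quasi-integrable and the terms are disjointly supported) together with the nonnegative case to obtain
\[
E^\G[\xi_1\xi_2]=\xi_1^+E^\G\xi_2^+-\xi_1^+E^\G\xi_2^--\xi_1^-E^\G\xi_2^++\xi_1^-E^\G\xi_2^-=(\xi_1^+-\xi_1^-)(E^\G\xi_2^+-E^\G\xi_2^-)=\xi_1E^\G\xi_2,
\]
using part 1 once more for $E^\G\xi_2^+-E^\G\xi_2^-=E^\G\xi_2$.

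The substance here is entirely bookkeeping with extended-real arithmetic: at each step I must check that the random variable under the conditional expectation is quasi-integrable, that no sum of conditional expectations is of the form $(+\infty)+(-\infty)$ on a set of positive measure, and that the distributive manipulations are valid at $\pm\infty$. The device that makes this painless is the observation that every sum appearing above is a sum of disjointly supported pieces, so its positive (resp.\ negative) part is the sum of the positive (resp.\ negative) parts of the pieces; this, together with tracking which one-sided part of $\xi_1+\xi_2$ or $\xi_1\xi_2$ is integrable, reduces everything to \thref{lem:Esum} and the defining property of $E^\G$. I do not expect any conceptual difficulty.
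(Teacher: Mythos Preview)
Your Part~1 is exactly the paper's argument: test against $\alpha\in L^\infty_+(\G)$ and invoke \thref{lem:Esum} twice. The extra bookkeeping you spell out (that the pairs $(\alpha\xi_1,\alpha\xi_2)$ and $(\alpha E^\G\xi_1,\alpha E^\G\xi_2)$ inherit the relevant condition of \thref{lem:Esum}) is implicit in the paper's one-line display.

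For Part~2 your route is correct but different from the paper's. You run the textbook bootstrap in the $\G$-measurable factor (indicators $\to$ simple $\to$ nonnegative via conditional monotone convergence) and then handle the general case by the four-quadrant split $\xi_1\xi_2=\sum\pm\xi_1^{\pm}\xi_2^{\pm}$, applying Part~1 and the nonnegative case to the disjointly supported pieces. The paper instead never decomposes $\xi_2$: it first notes the claim is immediate when $\xi_1\in L^\infty(\G)$ has a sign, then truncates a general $\xi_1$ to $\xi_1^\nu\in L^\infty(\G)$, splits only $\xi_1^\nu$ by sign to get $E^\G[\xi_1^\nu\xi_2]=\xi_1^\nu E^\G\xi_2$, and passes to the limit using that $(\xi_1^\nu\xi_2)^{\pm}\uparrow(\xi_1\xi_2)^{\pm}$ monotonically with one of the two limits integrable. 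The paper's approach is marginally cleaner in that it avoids splitting $\xi_2$ and the attendant distributive-law checks at $\pm\infty$; your approach has the virtue of being the most familiar measure-theory staircase, and your observation that all sums involved are of disjointly supported terms is exactly what keeps the extended-real bookkeeping honest. Neither approach has a real advantage in generality.
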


\begin{proof}
Let $\alpha\in L^\infty_+(\G)$. In 1, Lemma~\ref{lem:Esum} gives
\[
E[\alpha(\xi_1+\xi_2)]=E[\alpha\xi_1]+E[\alpha\xi_2] = E[\alpha E^\G\xi_1]+E[\alpha E^\G\xi_2] = E[\alpha(E^\G\xi_1+E^\G\xi_2)].
\]
To prove 2, note first that the claim is clear if $\xi_1$ is bounded and either nonpositive or nonnegative. Let $\xi^\nu_1$ be the projection $\xi_1$ to the $\uball_\nu$. Since $\xi_1\xi_2$ is quasi-integrable, 1 gives
\begin{align*}
E^\G[\xi^\nu_1\xi_2] &= E^\G[1_{\{\xi^\nu_1\ge 0\}} \xi^\nu_1\xi_2] +  E^\G[1_{\{\xi^\nu_1< 0\}} \xi^\nu_1\xi_2]\\
&=1_{\{\xi^\nu_1\ge 0\}} \xi^\nu_1E^\G[\xi_2] + 1_{\{\xi^\nu_1< 0\}} \xi^\nu_1 E^\G[\xi_2]\\
&=\xi^\nu_1 E^\G[\xi_2].
\end{align*}
The last term converges almost surely to $\xi_1E^\G[\xi_2]$. On the other hand, by 1,
\[
E^\G[\xi_1^\nu\xi_2] = E^\G[(\xi_1^\nu\xi_2)^+ -(\xi_1^\nu\xi_2)^-] =E^\G[(\xi_1^\nu\xi_2)^+] + E^\G[-(\xi_1^\nu\xi_2)^-].
\]
Since both terms on the right are monotone in $\nu$ and one of them is bounded,
\[
\lim E^\G[\xi_1^\nu\xi_2] = \lim E^\G[(\xi_1^\nu\xi_2)^+] + \lim E^\G[-(\xi_1^\nu\xi_2)^-].
\]
By conditional monotone convergence (see, e.g., \cite[Theorem II.7.2]{shi96}),
\[
\lim E^\G[\xi_1\xi_2] = E^\G[(\xi_1\xi_2)^+] + E^\G[-(\xi_1\xi_2)^-],
\]
where, by 1, the right side equals $E^\G[\xi_1\xi_2]$.
\end{proof}

\begin{lemma}\thlabel{tp}
Assume that $\xi$ is quasi-integrable and that $\G'\subseteq \G\subset\F$. Then
\[
E^{\G'}\xi = E^{\G'}[E^{\G}\xi].
\]
and
\[
E\xi=E[E^\G \xi].
\]
\end{lemma}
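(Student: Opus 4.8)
The plan is to verify directly that $E^{\G'}\xi$ enjoys the property characterizing $E^{\G'}[E^\G\xi]$, and then to read off the unconditional identity by taking $\G'$ trivial. First I would record that for the trivial $\sigma$-algebra $\G_0=\{\emptyset,\Omega\}$ the $\G_0$-measurable random variables are exactly the constants and, testing the defining identity with $\alpha\equiv 1$, one gets $E^{\G_0}\zeta=E\zeta$ for every quasi-integrable $\zeta$. Hence the second identity $E\xi=E[E^\G\xi]$ will follow from the first, applied with $\G'=\G_0$, once we know that $E^\G\xi$ is itself quasi-integrable.

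So the first substantive step is to check that $E^\G\xi$ is quasi-integrable, which is what makes $E^{\G'}[E^\G\xi]$ meaningful. Suppose $\xi^-\in L^1$ (the case $\xi^+\in L^1$ is symmetric, using $\one_{\{\eta>0\}}$ and an upper bound in place of what follows). Writing $\eta:=E^\G\xi$, the set $\{\eta<0\}$ lies in $\G$, so $\alpha:=\one_{\{\eta<0\}}\in L^\infty_+(\Omega,\G,P)$ is an admissible test function, and the defining identity of $E^\G\xi$ gives
\[
-E[\eta^-]=E[\alpha\eta]=E[\alpha\xi]\ge -E[\one_{\{\eta<0\}}\xi^-]\ge -E[\xi^-]>-\infty ,
\]
so $\eta^-\in L^1$ and $\eta$ is quasi-integrable.

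The tower identity is then immediate. Since $\G'\subseteq\G$, every $\alpha\in L^\infty_+(\Omega,\G',P)$ also lies in $L^\infty_+(\Omega,\G,P)$, so the defining identities of $E^\G\xi$ and of $E^{\G'}\xi$ yield
\[
E[\alpha\, E^\G\xi]=E[\alpha\xi]=E[\alpha\, E^{\G'}\xi]
\]
for all such $\alpha$; as $E^{\G'}\xi$ is $\G'$-measurable and quasi-integrable, this is precisely the property that determines $E^{\G'}[E^\G\xi]$ almost surely uniquely, whence $E^{\G'}[E^\G\xi]=E^{\G'}\xi$ a.s. Specializing $\G'=\G_0$ gives $E[E^\G\xi]=E\xi$.

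The argument involves no real obstacle; the only point asking for care is the extended-real bookkeeping in the quasi-integrability step, namely checking that each of $\alpha\eta$, $\alpha\xi$ and $\one_{\{\eta<0\}}\xi^-$ is quasi-integrable, so that the chain of (in)equalities of integrals is valid under the convention that an integral is $+\infty$ unless its positive part is integrable.
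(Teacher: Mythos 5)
Your proof is correct and takes essentially the same route as the paper, whose argument for the first identity is just ``follows directly from the definition''; your explicit verification that $E^\G\xi$ is quasi-integrable (so that $E^{\G'}[E^\G\xi]$ is even defined) is a worthwhile detail the paper leaves implicit. The only divergence is in the second identity, which the paper obtains by decomposing $\xi=\xi^+-\xi^-$ and invoking \thref{lem:ce} and \thref{lem:Esum}, while you specialize the tower property to the trivial $\sigma$-algebra (equivalently, take $\alpha\equiv 1$ in the defining identity); both are valid.
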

\begin{proof}
The first claim follows directly from the definition. As to the second, apply \thref{lem:ce} and \thref{lem:Esum} to $\xi=\xi^+-\xi^-$.
\end{proof}

The following is essentially \cite[Proposition 6.6 and Corollary 6.7]{kal2}
\begin{lemma}\thlabel{lem:condind}
Given $\sigma$-algebras $\G'$, $\G$ and $\H$, the following are equivalent:
\begin{enumerate}
\item $\G'$ and $\G$ are $\H$-conditionally independent,
\item $E^\H[w' w] = E^\H[w'] E^\H[w]$ for every $w'\in L^1(\G')$ and $w\in L^\infty(\G)$,
\item $ E^{\G\vee\H} [w'] = E^\H [w']$ for every $w'\in L^1(\G')$. 
\end{enumerate} 
In this case, if $\G$ and $\H$ are independent, so too are $\G$ and $\G'$.
\end{lemma}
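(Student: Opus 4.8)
The plan is to establish the cyclic chain (1) $\Rightarrow$ (2) $\Rightarrow$ (3) $\Rightarrow$ (1) and then deduce the last assertion; this is the classical argument (cf.\ \cite{kal2}). A useful preliminary remark is that every conditional expectation occurring in (2) and (3) is of a genuinely integrable random variable: since $w'\in L^1$ and $w\in L^\infty$ we have $w'w\in L^1$, so only the ordinary $L^1$-theory of conditional expectation and the tower property \thref{tp} are needed here, not the generalized notions.

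For (1) $\Rightarrow$ (2) I would start from the defining identity $E^\H[\one_{A'}\one_A]=E^\H[\one_{A'}]E^\H[\one_A]$ for $A'\in\G'$, $A\in\G$, extend it by linearity to $\G'$- and $\G$-measurable simple functions, then to $w'\in L^\infty(\G')$ and $w\in L^\infty(\G)$ by dominated convergence for conditional expectations, and finally to $w'\in L^1(\G')$ by truncating $w'$ on $\{|w'|\le\nu\}$: the left-hand side converges by dominated convergence with dominating function $|w'|\,\|w\|_{L^\infty}$, while $E^\H$ of the truncations converges to $E^\H[w']$ in $L^1$ by the contractivity $\|E^\H[\cdot]\|_{L^1}\le\|\cdot\|_{L^1}$, hence almost surely along a subsequence.

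For (2) $\Rightarrow$ (3), fix $w'\in L^1(\G')$; as $E^\H[w']$ is $\H$-measurable, hence $(\G\vee\H)$-measurable, it suffices to check $E[\one_B w']=E[\one_B E^\H[w']]$ for $B$ in the $\pi$-system $\{A\cap C\mid A\in\G,\ C\in\H\}$ generating $\G\vee\H$, and then invoke Dynkin's $\pi$--$\lambda$ theorem (the set of admissible $B$ is a $\lambda$-system because $w'$ and $E^\H[w']$ lie in $L^1$). For $B=A\cap C$, the tower property over $\H$, pulling out the $\H$-measurable $\one_C$, applying (2) with $w=\one_A\in L^\infty(\G)$, and pulling the $\H$-measurable factor $\one_C E^\H[w']$ back in give $E[\one_{A\cap C}w']=E[\one_C E^\H[\one_A]E^\H[w']]=E[\one_{A\cap C}E^\H[w']]$. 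For (3) $\Rightarrow$ (1) I would take $w'=\one_{A'}$ with $A'\in\G'$: then for $A\in\G$, using that $\one_A$ is $(\G\vee\H)$-measurable, the tower property over $\G\vee\H$ and (3) give $E^\H[\one_{A'}\one_A]=E^\H[\one_A E^{\G\vee\H}[\one_{A'}]]=E^\H[\one_A E^\H[\one_{A'}]]=E^\H[\one_{A'}]E^\H[\one_A]$, which is conditional independence.

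Finally, if $\G$ and $\H$ are independent then $E^\H[\one_A]=P(A)$ for every $A\in\G$, so for $A\in\G$, $A'\in\G'$ the tower property and part (1) give $P(A\cap A')=E[E^\H[\one_A]E^\H[\one_{A'}]]=P(A)\,E[E^\H[\one_{A'}]]=P(A)P(A')$, i.e.\ $\G$ and $\G'$ are independent. I expect the only genuine work to be the bookkeeping in the two approximation steps — extending from indicators to $L^1$ test functions in (1) $\Rightarrow$ (2) and setting up the Dynkin argument on the right $\pi$-system in (2) $\Rightarrow$ (3); because integrability of every product in sight is automatic, no real obstacle arises.
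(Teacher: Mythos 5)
Your proposal is correct and follows essentially the same route as the paper's proof: the same cyclic implication $(1)\Rightarrow(2)\Rightarrow(3)\Rightarrow(1)$ with identical key computations (the paper's monotone class argument is just your indicator--simple--$L^\infty$--truncation extension and your Dynkin $\pi$--$\lambda$ step stated more tersely), and the same derivation of the final independence claim via $E^\H[\one_A]=P(A)$.
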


\begin{proof}
The first implies the second by the monotone class theorem. When 2 holds, we have, for any $w'\in L^1(\G')$, $A\in\G$ and $B\in \H$,
\begin{align*}
E[E^{\H}[w'] 1_{A\cap B} ] = E[E^\H[w' 1_A] 1_B ] = E[w' 1_A 1_B] = E[E^{\G\vee\H} [w'] 1_{A\cap B} ],
\end{align*}
and, by the monotone class theorem, this extends from sets of the form $A\cap B$ to any set in $\G\vee\H$. Thus 2  implies 3. Assuming 3, we have, for $A'\in \G'$ and $A\in \G$, 
\begin{align*}
E^\H[1_A 1_{A'}] &= E^\H[E^{\G\vee\H} [1_A 1_{A'}]]= E^\H[1_A E^{\G\vee \H} 1_{A'}]\\
& =  E^\H[1_A E^{\H} 1_{A'}]= E^\H [1_A] E^{\H}[1_{A'}],
\end{align*}
so 1 holds.

Assume now in addition that $\G$ and $\H$ are independent. Given any $A'\in\G'$ and $A\in\G$, we get
\begin{align*}
E[1_{A'} 1_A] &=E[E^\H[1_{A'} 1_A]]=E[E^\H[1_{A'}]E^\H[1_A]]\\
&= E[E^\H[1_{A'}]E[1_A]]=E[1_{A'}]E[1_A],
\end{align*}
which implies the independence if $\G$ and $\G'$.
\end{proof}

\bibliographystyle{plain}
\bibliography{sp}

\end{document}